

\documentclass[EJP]{ejpecp} 






\SHORTTITLE{Spatial evolutionary games} 

\TITLE{Spatial evolutionary games \\ with small selection coefficients \thanks{Partially supported
by NSF grant DMS 1305997 and NIH grant R01-GM096190}} 



\AUTHORS{%
  Rick Durrett\footnote{Duke University
    \EMAIL{rtd@math.duke.edu}} }




\KEYWORDS{replicator equation; PDE; Lyapunov function ; cancer models } 

\AMSSUBJ{60K35 ; 92D15} 

\SUBMITTED{June 24, 2014} 
\ACCEPTED{December 27, 2014} 


\ARXIVID{1406.5876} 


\VOLUME{0}
\YEAR{2012}
\PAPERNUM{0}
\DOI{vVOL-PID}


\ABSTRACT{Here we will use results of Cox, Durrett, and Perkins \cite{CDP} for voter model perturbations to study spatial evolutionary games on $\ZZ^d$, $d\ge 3$ when the interaction kernel is finite range, symmetric, and has covariance matrix $\sigma^2I$. The games we consider have payoff matrices of the form ${\bf 1} + wG$ where ${\bf 1}$ is matrix of all 1's and $w$ is small and positive. 
Since our population size $N=\infty$, we call our selection small rather than weak which usually means $w =O(1/N)$. 

The key to studying these games is the fact that when the dynamics are suitably rescaled in space and time they convergence to solutions of a reaction diffusion equation (RDE). Inspired by work of Ohtsuki and Nowak \cite{ON06} and Tarnita et al \cite{TOAFN, TWN} we show that the reaction term is 
the replicator equation for a modified game matrix and the modifications of the game matrix depend on the interaction kernel only through the values of two or three simple probabilities for an associated coalescing random walk. 

Two strategy games lead to an RDE with a cubic nonlinearity, so we can describe the phase diagram completely. Three strategy games lead to a pair of coupled RDE, but using an idea from our earlier work \cite{memoir}, we are able to show that if there is a repelling function for the replicator equation for the modified game, then there is coexistence in the spatial game when selection is small. This enables us to prove coexistence in the spatial model in a wide variety of examples including the behavior of four evolutionary games that have recently been used in cancer modeling.}


\newcommand\beq{\begin{equation}}
\newcommand\eeq{\end{equation}}
\newcommand\ep{\varepsilon}
\newcommand\hbr{\hfill\break}
\newcommand\mn{\medskip\noindent}
\newcommand\ms{\medskip}
\newcommand\nass{\noalign{\smallskip}}
\newcommand\RR{\mathbb{R}}
\newcommand\ZZ{\mathbb{Z}}
\newcommand\sqz{\kern -0.2em}


\begin{document}



\tableofcontents

\clearpage

\section{Introduction}

Game theory was invented by John von Neumann and Oscar Morgenstern \cite{vNM} to study strategic and economic decisions of humans. Maynard Smith and Price \cite{MSP73}, introduced the concept into ecology in order to explain why conflicts over territory between male animals of the same species are usually of the ``limited war'' type and do not cause serious damage. As formulated in John Maynard Smith's classic book, \cite{MS82}, this is the

\begin{example} \label{HD}
{\bf Hawks-Doves game.}
When a confrontation occurs, Hawks escalate
and continue until injured or until an opponent retreats, while Doves make a display of force but retreat at once
if the opponent escalates. The payoff matrix is
\begin{center}
\begin{tabular}{ccc}
& Hawk & Dove \\
Hawk & $(V-C)/2$ & $V$ \\
Dove & $0$ & $V/2$
\end{tabular}
\end{center}
Here $V$ is the value of the resource, which two doves split, and $C$ is the cost of competition. 
There are two conventions in the literature for the interpretation of the payoff matrix. Here we are using the one in which the first row gives the payoffs for the Hawk strategy based on the opponent's strategy choice. 

If we suppose that $C>V$ then a frequency $p=V/C$ of Hawks in the population represents an equilibrium, since in this case each strategy has the same payoff.
The equilibrium is stable. If the frequency of the Hawk strategy rises to $p>V/C$ then the Hawk strategy has a worse payoff than the Dove strategy so its frequency will decrease. 
\end{example}

Axelrod and Hamilton \cite{AH81}, see also \cite{Ax84}, studied the evolution of cooperation by investigating 

\begin{example} \label{PD}
{\bf Prisoner's dilemma.} Rather than formulate this in terms of prisoners deciding whether to confess or not, we consider the following game between cooperators $C$ and defectors $D$. Here $c$ is the cost
that cooperators pay to provide a benefit $b$ to the other player.  
\begin{center}
\begin{tabular}{ccc}
& C & D \\
C & $b-c$ & $-c$ \\
D & $b$ & $0$
\end{tabular}
\end{center}
If $b>c>0$ then the defection dominates cooperation, and, as we will see, altruistic cooperators are destined to die out in a homogeneously mixing population. This is unfortunate since the $(D,D)$ payoff is worse than $(C,C)$ payoff. 
Nowak and May \cite{NoMa92,NoMa93} found an escape from this dilemma by showing that spatial structure allowed persistence of cooperators. 
Huberman and Glance \cite{HG93} suggested that this was an artifact of the synchronous deterministic update rule, but later work \cite{NBM94,NBM94b} showed that the same phenomena occurred for asynchronous stochastic updates. 
\end{example}

There are many examples of $2\times 2$ games with names like Harmony, the Snowdrift Game, and the Battle of the Sexes, see e.g., \cite{Ha01}, but as we will see in Section \ref{sec:2sg} there are really only 3 (or 4) examples. To complete the set of possibilities, we now consider

\begin{example} \label{StagHunt}
{\bf Stag Hunt.} The story of this game was briefly described by Rousseau in his 1755 book {\it A Discourse on Inequality}.  If two hunters cooperate to hunt stag (an adult male deer) then they will bring home a lot of food, but there is practically no chance of bagging a stag by oneself. If both hunters go after rabbit they split what they kill. An example of a game of this type is:
\begin{center}
\begin{tabular}{ccc}
& Stag & Hare \\
Stag & 3 & 0 \\
Hare & 2 & 1
\end{tabular}
\end{center}
In this case if the two strategies have frequency $p=1/2$ in the population, then the two strategies have equal payoffs. If the frequency of the stag strategy rises to $p>1/2$ then it has the better payoff and will continue to increase so this is an unstable equilibrium.
\end{example}

In the forty years since the pioneering work of Maynard Smith and Price
 \cite{MSP73} evolutionary game theory has been used to study many biological problems. For surveys see \cite{HarSel}--\cite{Nowak06}. This is natural because evolutionary game theory provides
a general framework for the study of frequency dependent selection. In the references we have listed representative samples of work of this type, \cite{DL94}--\cite{NL13}. There are literally hundreds of references, so we have restricted our attention to those that are the most relevant to our investigations.

In recent years, evolutionary game theory has been applied to study cancer. This provides an important motivation for our work, so we will now consider four examples beginning with one first studied by Tomlinson \cite{Tom}.

\begin{example} \label{Tom}
{\bf Three species chemical competition.}
In this system, there are cells of three types.

\medskip
1. Ones that Produce a toxic substance, and are sensitive to toxins produced by other cells. 

\medskip
2. Others that are Resistant to the toxin, but do not produce it.

\medskip
3. Wild type cells that are Sensitive to the toxin but do not produce it.

\mn
Based on the verbal description, Tomlinson wrote down the following game matrix.
$$
\begin{matrix} 
 & P & R & S \\
P & z-e-f+g & z-e & z-e+g \\
R & z-h & z-h & z-h \\
S & z-f & z  & z 
\end{matrix}
$$
Here, and in what follows it is sometimes convenient to number the strategies $1=P$, $2=R$ and $3=S$. For example, this makes it easier to say that $G_{ij}$ is the payoff to a player who plays strategy $i$ against an opponent playing strategy $j$. Taking the rows in reverse order, $z$ is the baseline fitness while $f$ is the cost
to a sensitive cell due to the presence of the toxin. The cost of resistance to the toxin 
is $h$. In top row $e$ is the cost of producing the toxin, and $g$ is advantage to a producer when it interacts with a sensitive cell. 
\end{example}

It is interesting to note that in the same year \cite{Tom} was published, Durrett and Levin \cite{DL97} used a spatial model to model the competition two strains of {\it E. coli}, whose behaviors correspond to strategies $P$, $R$, and $S$ above. In their model, there are also empty cells (denoted by 0). Thinking of a petri dish, their system takes place on the two dimensional lattice with the following dynamics. 

\begin{itemize}
  \item 
Individuals of type $i>0$ give birth at rate $\beta_i$ with their offspring sent to a site chosen at random from the four nearest neighbors of $x$. If the site is occupied nothing happens. 
  \item
Each species dies at rate $\delta_i$ due to natural causes, while type 3's die at an additional rate $\gamma$ times the fraction of neighbors of type 1 due to the effect of colicin.
\end{itemize}

\begin{example} \label{Glyco}
{\bf Glycolytic phenotype.}
It has been known for some time that cancer cells commonly switch to glycolysis for energy production. 
Glycolysis is less efficient than the citrate cycle in terms of energy, but allows cell survival 
in hypoxic environments. In addition, cells with glycolytic metabolism can change the pH of the
local microenvironment to their advantage. 

The prevalence of glycolytic cells in invasive tumor suggests that their presence could benefit the emergence of
invasive phenotypes. To investigate this using evolutionary game theory, Basanta et al \cite{BSHD} considered a three strategy
game in which cells are initially characterized as having autonomous growth ($AG$), but could switch to
glycolysis for energy production ($GLY$), or become increasing mobile and invasive ($INV$). 
The payoff matrix for this game, which is the transpose of the one in their Table 1, is:

\begin{center}
\begin{tabular}{lccc}
& $AG$ & $INV$ & $GLY$ \\
$AG$  & $\frac{1}{2}$ & 1 & $\frac{1}{2} - n$\\
\nass
$INV$ & $1-c$ & $1-\frac{c}{2}$ & $1-c$ \\
\nass
$GLY$ & $\frac{1}{2} + n - k$ & $1-k$ & $\frac{1}{2} - k$
\end{tabular}
\end{center}

\noindent
Here $c$ is the cost of mobility, $k$ is the cost to switch to glycolysis, and
$n$ is the detriment for nonglycolytic cell in glycolytic environment, which is equal to the bonus for a glycolytic cell.
Recently this system has been extended to a four player game by adding an invasive-glycolytic strategy, see \cite{BasIDH1}.
\end{example}

\begin{example} \label{Stroma}
{\bf Tumor-Stroma Interactions.} 
Tumors are made up of a mixed population of different types of cells that include normal structures as well as ones associated with malignancy, and there are multiple interactions between the malignant cells and the local microenvironment. These intercellular interactions effect tumor progression. In prostate cancer it has been observed that one can have three different outcomes: 

\mn
(i) The tumor remains well differentiated and relatively benign. In this case the local stromal cells 
(also called connective tissue) may serve to restrain the growth of the cancer.

\mn
(ii) Early in its genesis the tumor acquires a highly malignant phenotype, growing 
rapidly and displacing the stromal population (often called small cell prostate cancer).

\mn
(iii) The tumor co-opts the local stroma to aid in its growth.

\mn
To understand the origin of these behaviors, Basanta et al \cite{BSFAHA} formulated a game with
three types of players $S =$ stromal cells, $I =$ cells that have become independent of the micro-environment, and $D = $ cells that remain dependent on the microenvironmnet.
The payoff matrix is:

\begin{center}
\begin{tabular}{lccc}
& $S$ & $D$ & $I$ \\
$S$ & 0 & $\alpha$ & 0 \\
$D$ & $1+\alpha-\beta$ & $1-2\beta$ & $1-\beta + \rho$ \\
\nass
$I$ & $1 - \gamma$  & $1 - \gamma$ & $1 - \gamma$
\end{tabular}
\end{center}

\mn
Again this is the transpose of their matrix.
Here $\gamma$ is the cost of being environmentally independent,
$\beta$ cost of extracting resources from the micro-environment,
$\alpha$ is the benefit derived from cooperation between $S$ and $D$,
and $\rho$ benefit to $D$ from paracrine growth factors produced by $I$. 
\end{example}

\begin{example} \label{bone}
{\bf Multiple Myeloma.}
Normal bone remodeling is a consequence of a dynamic balance between osteoclast ($OC$) mediated bone resorption
and bone formation due to osteoblast ($OB$) activity.
Multiple myeloma ($MM$) cells disrupt this balance in two ways. 

\mn
(i) $MM$ cells produce a variety of cytokines that stimulate the growth of the $OC$ population.

\mn
(ii) Secretion of $DKK1$ by $MM$ cells inhibits  $OB$ activity.

\mn
$OC$ cells produce osteoclast activating factors that stimulate the growth of $MM$ cells
where as $MM$ cells are not effected by the presence of $OB$ cells.
These considerations lead to the following game matrix. Here, $a,b,c,d,e >0$.

\begin{center}
\begin{tabular}{lccc}
& $OC$ & $OB$ & $MM$ \\
$OC$  & 0 & $a$ & $b$\\
$OB$ & $e$ & $0$ & $-d$ \\
$MM$ & $c$ & $0$ & $0$
\end{tabular}
\end{center}

\mn
For more on the biology, see Dingli et al.~\cite{DCSSP}.

\end{example}

A number of other examples have been studied. Tomlinson and Bodmer  \cite{TomBod} studied games motivated by angiogenesis
and apototsis. See Basanta and Deutsch \cite{BasDeu} for a survey of this and other early work.
Swierniak and Krzeslak's survey  \cite{SwKrz} contains the four examples we have covered here, as well as
a number of others.

\section{Overview}

In a homogeneously mixing population, $x_i =$ the frequency of players using strategy $i$ follows the replicator equation
$$
\frac{dx_i}{dt} = x_i(F_i - \bar F) 
$$
where $F_i = \sum_j G_{i,j} x_j$ is the fitness of strategy $i$ and $\bar F = \sum_i x_i F_i$ is the average fitness. 

Twenty years ago, Durrett and Levin \cite{DL94} studied evolutionary games and formulated rules for predicting the behavior
of spatial models from the properties of the mean-field differential equations obtained by supposing that adjacent sites 
are independent. See \cite{RDWald} for an overview of this approach. Our main motivation for revisiting this question
is that the recent work of Cox, Durrett, and Perkins \cite{CDP} allows us to turn the heuristic principles
of \cite{DL94} into rigorous results for evolutionary games with  matrices of the form $\bar G = {\bf 1} + w G$, where $w>0$ is small,
and ${\bf 1}$ is a matrix that consists of all 1's. 

We will study these games on $\ZZ^d$ where $d \ge 3$ and the interactions between an individual and its neighbors are given by an irreducible probability
kernel $p(x)$ on $\ZZ^d$ with $p(0)=0$ that is finite range, symmetric $p(x)=p(-x)$, and has covariance matrix $\sigma^2 I$. To describe the dynamics we let
$\xi_t(x)$ be the strategy used by the individual at $x$ at time $t$ and  
$$
\psi_t(x) = \sum_y \bar G(\xi_t(x),\xi_t(y)) p(y-x)
$$
be the fitness of $x$ at time $t$. In Birth-Death dynamics, site $x$ gives birth at rate $\psi_t(x)$ and sends its offspring to replace the individual at $y$
with probability $p(y-x)$. In Death-Birth dynamics, the individual at $x$ dies at rate 1, and replaces itself by a copy of the one at $y$
with probability proportional to $p(y-x)\psi_t(y)$.

When $w=0$ either version of the dynamics reduces to the voter model, a system in which each site at rate 1 changes its state to that of a randomly chosen neighbor. As we will explain in Section \ref{sec:vmp}, when $w$ is small, our spatial evolutionary game is a voter model perturbation in the sense of Cox, Durrett, and Perkins \cite{CDP}. Section \ref{sec:vmsi} describes the duality between the voter model and coalescing random walk, which is the key to study of the voter model and its perturbations. More details are given in Section \ref{sec:vmd}. Section \ref{sec:vmsi} also introduces the coalescence probabilities that are the key to our analysis and states some identities between them that are proved in Section \ref{sec:coalid}.

The next step is to show that when $w \to 0$ and space and time are rescaled appropriately then frequencies of strategies in the evolutionary game converge to a limiting PDE. Section \ref{sec:pdelim} states the results, while Section \ref{sec:derPDE} shows how these conclusions follow from results in \cite{CDP}.
While this is a ``known result,'' there are two important new features here. The reaction term is identified as the replicator equation for a modified game, and it is shown that the modifications of the game matrix depend only on the effective number of neighbors $\kappa$ (defined in \eqref{enn} below) and two probabilities for the coalescing random walk with jump kernel $p$. The first fact allows us to make use of the theory that has been developed for replicator equations, see \cite{HS98}. 
 
Two strategy games are studied in Section \ref{sec:2sg}. A complete description of the phase diagram for Death-Birth and Birth-Death dynamics is possible because the limiting RDE's have cubic reaction terms $f(u)$ with $f(0)=f(1)=0$, so we can make use of work of \cite{AW75,AW78,FM77,FM81}. As a corollary of this analysis, we are able to prove that Tarnita's formula for two strategy games with weak selection holds in our setting. They say that a strategy in a $k$ strategy game is ``favored by selection'' if its frequency in equilibrium is $> 1/k$ when $w$ is small. Tarnita et al \cite{TOAFN} showed that this holds for strategy 1 in a 2 by 2 game if and only if
$$
\sigma G_{1,1} + G_{1,2} > G_{2,1} + \sigma G_{2,2}
$$
where $\sigma$ is a constant that depends only on the dynamics. In our setting $\sigma=1$ for Birth-Death dynamics while $\sigma=(\kappa+1)/(\kappa-1)$ for Death-Birth dynamics where
\beq
\kappa = 1\left/ \sum_{x} p(x)p(-x) \right.
\label{enn}
\eeq
is the ``effective number of neighbors.'' To explain the name note that if $p$ is uniform on a symmetric set $S$ of size $m$, (i.e.,  $x\in S$ implies $-x\in S$), and $0 \neq S$ then $\kappa=m$.

In Section \ref{sec:3sgode} we begin our study of three strategy games by analyzing the behavior of their replicator equations.
This is done using the invadability analysis developed in \cite{memoir}. The first step is to analyze the three $2 \times 2$ subgames. In $1,2$ subgame there are
four possibilities: 

\begin{itemize}
  \item  strategy 1 dominates 2, $1 \gg 2$; 
  \item strategy 2 dominates 1, $2 \gg 1$; 
  \item there is a stable mixed strategy equilibrium which is the limit starting from any point on the interior of the edge; 
  \item there is an unstable mixed strategy which separates the sets of points that converge to the two endpoints. 
\end{itemize} 
In the case that there is a mixed strategy equilibrium, we also have to see if it can be invaded by the strategy not in the subgame, i.e., its frequency will increase when rare.

Our method for analyzing the spatial game is to prove the existence of a repelling function (i.e., a convex Lyapunov function that blows up near the boundary and satisfies some mild technical conditions, see  Section \ref{ssec:repel} for a precise definition). Unfortunately, a repelling function cannot exist when there is an unstable fixed point on some edge, but this leaves a large number of possibilities. In Sections \ref{ssec:blemmas} and  \ref{ssec:Lexist}, we prove that they exist for three of the four classes of examples identified in Section \ref{sec:3sgode} that have attracting interior fixed points. 

To build repelling functions, we construct for $i=1,2,3$ a function $h_i$ that blows up on $\{ u_i=0 \}$, and is decreasing along trajectories near the side $u_i=0$. We then pick $M_i$ large enough so that $\bar h_i = \max\{h_i,M_i\}$ is always decreasing along trajectories and define 
$$
\psi = \bar h_1 + \bar h_2 + \bar h_3
$$
This repelling function is often easy to construct and allows us to prove coexistence of the three types, but it does not allow us to obtain very precise information about the frequencies of the
three types in the equilibrium. An exception is the set of games considered in Section \ref{ssec:acs} that are almost constant sum. In that
case we have a repelling function that is decreasing everywhere except at the interior fixed point $\rho$, so Theorem 1.4 of \cite{CDP} allows us to 
conclude that when $w$ is small any nontrivial stationary distribution for the spatial model has frequencies near $\rho$.

In Section \ref{sec:cancer}, we turn our attention to the three strategy examples from the introduction, Examples \ref{Tom}--\ref{bone}. Using the results from the Sections \ref{sec:3sgode} and \ref{sec:3sgsp}, we are able to analyze these games in considerable detail. While we are able to treat a number of examples, there are also some large gaps in our knowledge. 

\begin{itemize}
\item Consider a three strategy game with payoff matrix
$$
\begin{matrix}
 0 & \alpha_3 & \beta_2 \\
\beta_3 & 0 & \alpha_1 \\
\alpha_2 & \beta_1 & 0
\end{matrix}
$$
where $\alpha_i < 0 < \beta_i$ so that the strategies have a rock-paper-scissors cyclic dominance relationship. Find conditions for coexistence
of the three strategies in the spatial game. For the replicator equation the condition is $\beta_1\beta_2\beta_3 + \alpha_1\alpha_2\alpha_3>0$.

\item
Perhaps the most important open problem is that we cannot 
handle the case in which the replicator equation is bistable, i.e., there are two locally attracting fixed points $u^*$ and $v^*$. See Examples 7.2B and 7.3B.
By analogy with the work of Durrett and Levin \cite{DL94}, we expect that the winner in the spatial game is dictated by the direction of
movement of the traveling wave solution that tends to $u^*$ at $-\infty$ and $v^*$ at $\infty$. However, we are not aware of results
that prove the existence of traveling waves solutions, or methods to prove convergence of solutions to them.

\item
A less exciting open problem is prove ``dying out results'' which show that one strategy takes over the system (see Examples 7.1A and 7.3A) or that one strategy dies out leaving an equilibrium that consists of a mixture of the other two strategies (see Examples 7.2A, 7.3C and 7.3D).  Given results in Section \ref{sec:3sgsp}, it is natural to start by finding suitable convex Lyapunov functions. However, (i) in contrast to the repelling functions used to prove coexistence, these functions must be decreasing everywhere except at the point which is the limiting value of the replicator equation, and (ii) auxillary arguments such as the ones used in Chapter 7 of \cite{CDP} are needed to prove that the limiting frequencies are 0 rather than just small.

\end{itemize}

\section{Voter model perturbations} \label{sec:vmp}

The state of the system at time $t$ is $\xi_t : \ZZ^d \to S$ where $S$ is a finite set of sates that
are called strategies or opinions. $\xi_t(x)$ gives the state of the individual at $x$ at time $t$.
To formulate this class of models, 
let $f_i(x,\xi) = \sum_y p(y-x) 1(\xi(y)=i)$ be the fraction of neighbors of $x$ in state $i$.
In the voter model, the rate at which the voter at $x$ changes its opinion from $i$ to $j$ is
$$
c^v_{i,j}(x,\xi) = 1_{(\xi(x)=i)} f_j(x,\xi) 
$$

The voter model perturbations that we consider have flip rates 
\beq
c^v_{i,j}(x,\xi) + \ep^2 h^\ep_{i,j}(x,\xi) 
\label{frates}
\eeq
The perturbation functions $h^\ep_{ij}$ may be negative (and will be for games with negative entries) but 
in order for the analysis in \cite{CDP} to work, there must be a 
law $q$ of $(Y^1, \ldots Y^{m}) \in (\ZZ^d)^{m}$ and functions $g_{i,j}^\ep \ge 0$, which 
converge to limits $g_{i,j}$ as $\ep\to 0$, so that for some $\gamma < \infty$, we have for $\ep\le \ep_0$
\beq
h^\ep_{i,j}(x,\xi) = - \gamma f_i(x,\xi) + E_{Y}[g_{i,j}^\ep(\xi(x+Y^1), \ldots \xi(x+Y^{m}))]
\label{vptech}
\eeq
In words, we can make the perturbation positive by adding a positive multiple of the voter flip rates.
This is needed so that \cite{CDP} can use $g^\ep_{i,j}$ to define jump rates of a Markov process.

We have assumed that $p$ is finite range. As \eqref{hBD} and \eqref{hDB} will show, $q$ is a pointmass
on the vector of sites that can be reached from 0 by two steps taken according to $p$. Applying
Proposition 1.1 of \cite{CDP} now implies the existence of a suitable $g^\ep_{i,j}$
and that all our calculations can be done using the original perturbation. However, to use Theorems 1.4 and 1.5 in \cite{CDP}
we need to suppose that 
$$
h_{i,j} = \lim_{\ep\to 0} h^\ep_{i,j}.
$$
has $|h(i,j) - h^\ep(i,j)| \le C \ep^r$ for some $r>0$, see (1.41) in \cite{CDP}. We will study two update rules. In the first the perturbation is
independent of $\ep$. In the second the last condition holds with $r=2$.

Let $\xi^\ep_t$ be the process with flip rates given in \eqref{frates}.
The next result is the key to the analysis of voter model perturbation.  
Intuitively it says that if we rescale space to $\ep\ZZ^d$ and speed up time by $\ep^{-2}$ 
the process converges to the solution of a partial differential equation. The first thing we have to do is to define
the mode of convergence. Given $r\in(0,1)$, let $a_\ep = \lceil \ep^{r-1} \rceil \ep$, $Q_\ep = [0, a_\ep)^d \cap \ep \ZZ^d$, and
$|Q_\ep|$ the number of points in $Q_\ep$. For $x \in a_\ep \ZZ^d$ and $\xi\in \Omega_\ep$ the space of all functions
from $\ep\ZZ^d$ to $S$ let
$$
D_i(x,\xi) = |\{ y \in Q_\ep : \xi(x+y) = i \}|/|Q_\ep|
$$ 

We endow $\Omega_\ep$ with the $\sigma$-field ${\cal F}_\ep$ generated by the finite-dimensional distributions. Given a sequence of measures
$\lambda_\ep$ on $(\Omega_\ep,{\cal F}_\ep)$ and continuous functions $v_i$, we say that $\lambda_\ep$ has asymptotic densities
$v_i$ if for all $0 < \delta, R < \infty$ and all $i\in S$
$$
\lim_{\ep\to 0} \sup_{x\in a_\ep\ZZ^d, |x| \le R} \lambda_\ep ( |D_i(x,\xi) - v_i| > \delta ) \to 0
$$

\begin{theorem} \label{hydro}
Suppose $d \ge 3$.
Let $v_i: \RR^d \to [0,1]$ be continuous with $\sum_{i\in S} v_i = 1$. Suppose the initial conditions $\xi^\ep_0$
have laws $\lambda_\ep$ with local density $v_i$ and let  
$$
u^\ep_i(t,x) = P( \xi^\ep_{t\ep^{-2}}(x) = i)
$$ 
If $x_\ep \to x$ then $u^\ep_i(t,x_\ep) \to u(t,x)$ the solution of the system of partial differential equations:
$$
\frac{\partial}{\partial t} u_i(t,x) = \frac{\sigma^2}{2} \Delta u_i(t,x) +  \phi_i(u(t,x))
$$
with initial condition $u_i(0,x) = v_i(x)$. The reaction term 
$$
\phi_i(u) = \sum_{j \neq i} \langle 1_{(\xi(0)=j)} h_{j,i}(0,\xi) -  1_{(\xi(0)=i)} h_{i,j}(0,\xi) \rangle_u
$$
where the brackets are expected value with respect to the voter model stationary distribution $\nu_u$
in which the densities are given by the vector $u$.
\end{theorem}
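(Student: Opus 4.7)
The plan is to cast $\xi^\ep_t$ into the voter-model-perturbation framework of Cox, Durrett, and Perkins and then invoke their hydrodynamic limit theorem, which is Theorem~1.2 of \cite{CDP}. Concretely, I would first verify the structural hypothesis \eqref{vptech}: since $p$ is finite range, the flip-rate perturbation $h^\ep_{i,j}$ depends on $\xi$ only through the values at the finite set of sites reachable from $0$ in at most two $p$-steps. Proposition~1.1 of \cite{CDP} then gives a $\gamma < \infty$ and nonnegative functions $g^\ep_{i,j}$ such that $h^\ep_{i,j}=-\gamma f_i + E_Y[g^\ep_{i,j}(\xi(x+Y^1),\dots,\xi(x+Y^m))]$, with $q$ the joint law of those two $p$-steps; the condition $|h_{i,j}-h^\ep_{i,j}|\le C\ep^r$ is immediate when the perturbation is $\ep$-independent and holds with $r=2$ otherwise. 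The convergence of laws $\lambda_\ep$ with asymptotic density $v_i$ supplies the initial data hypothesis of \cite{CDP}.

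Given these verifications, the hydrodynamic limit of \cite{CDP} supplies convergence of $u^\ep_i(t,x_\ep)$ to the solution of the reaction-diffusion system. The diffusion term $\tfrac{\sigma^2}{2}\Delta u_i$ comes directly from the unperturbed voter dynamics: under the dual representation, each coalescing random walk has mean zero, covariance $\sigma^2 I$, and finite range, so on the lattice $\ep\ZZ^d$ sped up by $\ep^{-2}$ it converges to Brownian motion with covariance $\sigma^2 I$, producing the Laplacian. Assumption $d\ge 3$ enters here because transience of the random walk is what makes the voter model have nontrivial stationary measures $\nu_u$ indexed by density vectors $u$.

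The nontrivial content is the identification of $\phi_i$. Heuristically, the voter part of the dynamics mixes locally on a time scale of $O(1)$ while the perturbation acts on the time scale $\ep^{-2}$; thus after the time change, the local environment at a typical point $x$ should be close in total variation to a sample from $\nu_{u(t,x)}$. The net rate at which strategy $i$ is gained at $x$ due to the $\ep^2 h^\ep_{i,j}$ terms in \eqref{frates}, after multiplication by the time scaling $\ep^{-2}$, is therefore expected to be
\[
\sum_{j\ne i}\bigl\langle 1_{(\xi(0)=j)}h_{j,i}(0,\xi)-1_{(\xi(0)=i)}h_{i,j}(0,\xi)\bigr\rangle_u,
\]
which is exactly $\phi_i(u)$. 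Continuity of $\phi_i$ (since $h_{i,j}$ depends on only finitely many coordinates and $u\mapsto \nu_u$ is continuous in the right sense) and standard well-posedness of the semilinear heat equation then give the PDE limit claimed in the statement.

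The main obstacle, and the real content of the theorem, is the averaging lemma required to replace $h^\ep_{i,j}(x,\xi^\ep_{t\ep^{-2}})$ by its voter-stationary average at local density $u(t,x)$. In \cite{CDP} this is done by combining duality with moment bounds that compare, in total variation, the joint law of $\xi^\ep_{t\ep^{-2}}$ at finitely many sites near $x$ to $\nu_{u(t,x)}$ on the same sites, with an error that vanishes as $\ep\to 0$; transience of the random walk in $d\ge 3$ is crucial here. Once this averaging is in hand, together with the smoothness estimate $|h-h^\ep|\le C\ep^r$ needed to replace $h^\ep$ by its limit, the proof reduces to showing that any subsequential limit of $u^\ep_i(t,x_\ep)$ is a mild solution of the RDE and invoking uniqueness.
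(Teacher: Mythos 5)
Your proposal follows essentially the same route as the paper: verify the voter-model-perturbation hypotheses (the structural condition \eqref{vptech} via Proposition 1.1 of \cite{CDP}, plus the bound $|h_{i,j}-h^\ep_{i,j}|\le C\ep^r$) and then invoke the hydrodynamic limit theorem of \cite{CDP}, with the reaction term identified by averaging the perturbation against the voter equilibrium $\nu_u$. The one point you omit is that \cite{CDP} prove their theorem only for $S=\{0,1\}$, so the paper adds the remark that the dual (``influence set'') construction---coalescing random walks that branch at rate $\ep^2$ by adding the sites $z+Y^1,\dots,z+Y^m$---is insensitive to the number of types, so its convergence to branching Brownian motion, and hence the whole argument, carries over to a general finite $S$.
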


\noindent
Intuitively, since on the fast time scale the voter model runs at rate $\ep^{-2}$ versus the
perturbation at rate 1, the process is always close to the voter equilibrium for 
the current density vector $u$. Thus, we can compute the rate of change of $u_i$ by assuming the nearby
sites are in that voter model equilibrium. The restriction to dimensions $d \ge 3$ is due to the
fact that the voter model does not have nontrivial stationary distributions in $d\le 2$.
For readers not familiar with the voter model, we recall the relevant facts in Section \ref{sec:vmd}.

Theorem 1.2 in \cite{CDP} gives more information. It shows that the joint distribution 
of the values at $x_\ep + \ep y_i$ converges to those of the voter model with densities $u_i(t,x)$.
In addition, it shows that if we consider $k$ points $x^k_\ep$ with $\ep^{-1}|x^j_\ep - x^k_\ep| \to \infty$
then the finite dimensional distributions near these points are asymptotically independent. Once this is done, see Theorem 1.3 in \cite{CDP}, it is easy to
conclude that the local densities at time $t$, $D_i(x,\xi_t)$ converge to $u_i(t,x)$ in $L^2$ uniformly on compact sets.
This result makes precise the sense in which the rescaled particle system converges to the solution of a PDE.

The results we have just stated are proved in \cite{CDP} only for the case $S = \{0,1\}$. The proof is almost 30 pages
but it is easy to see that it extends easily to a general finite set $S$. The proof begins by constructing the process with flip rates \eqref{frates} on $\ZZ^d$
using Poisson processes to generate the changes at each site. This enables us to construct for each $x$ and $t$ a dual process $\zeta^{\ep,x,t}_s$
which is what Durrett and Neuhauser call the ``influence set.'' If we know the values of $\xi^\ep_{t-s}$ on $\zeta^{\ep,x,t}_s$
then we can compute $\xi^\ep_t(x)$. In the dual particles undergo random walks at rate 1, coalescing when they hit, and at rate $\ep^2$
we have to add points $z+Y^1, \ldots z+Y^m$ when an event in the perturbing process occurs at a point $z$ currently in the dual. 
Only the rates at which various sets are added is relevant, not how they are used to compute the change of state in the process,
so the convergence of the dual to branching Brownian motion is the same as before, and that result leads easily to the
conclusions stated above.

\mn
{\bf Update rules.} We will consider two versions of spatial evolutionary game dynamics.

\mn
{\bf Birth-Death dynamics.}
In this version of the model, a site $x$ gives birth at a rate equal to its fitness
$$
\sum_{y} p(y-x) \bar G(\xi(x),\xi(y))
$$
and the offspring replaces a ``randomly chosen neighbor of $x$.'' Here and in what follows, the phrase in quotes
means $z$ is chosen with probability $p(z-x)$. If we let 
$r_{i,j}(0,\xi)$ be the rate at which the state of $0$ flips from $i$ to $j$, then setting $w=\ep^2$
and using symmetry $p(x)=p(-x)$, we get
\begin{align}
r_{i,j}(0,\xi) & = \sum_{x} p(x) 1(\xi(x)=j) \sum_{y} p(y-x) \bar G(j,\xi(y))
\nonumber \\
 & = \sum_{x} p(x) 1(\xi(x)=j) \left( 1 + \ep^2 \sum_{k} f_k(x,\xi) G_{j,k} \right) 
\nonumber \\
& = f_j(0,\xi) + \ep^2 \sum_k f^{(2)}_{j,k}(0,\xi) G_{j,k},
\label{rijBD}
\end{align}
where $f^{(2)}_{j,k}(0,\xi) = \sum_x \sum_y p(x) p(y-x) 1(\xi(x)=j,\xi(y)=k)$, so the perturbation,
which does not depend on $\ep$ is
\beq
h_{i,j}(0,\xi) = \sum_{k} f^{(2)}_{j,k}(0,\xi) G_{j,k}
\label{hBD}
\eeq

\mn
{\bf Death-Birth Dynamics.}
In this case, each site dies at rate one and is replaced by a neighbor chosen with probability proportional to its fitness. 
Using the notation in \eqref{rijBD} the rate at which $\xi(0)=i$ jumps to state $j$ is
\begin{align}
\bar r_{i,j}(0,\xi) & = \frac{r_{i,j}(0,\xi)}{\sum_k r_{i,k}(0,\xi)}  
= \frac{f_j(0,\xi) + \ep^2 h_{i,j}(0,\xi)}{ 1 + \ep^2 \sum_k h_{i,k}(0,\xi) } 
\nonumber\\
& = f_j + \ep^2 h_{i,j}(0,\xi) - \ep^2 f_j \sum_k h_{i,k}(0,\xi) + O(\ep^4)
\label{rijDB}
\end{align}
The new perturbation, which depends on $\ep$, is
\beq
\bar h_{i,j}^\ep(0,\xi) = h_{i,j}(0,\xi) -f_j \sum_k h_{i,k}(0,\xi) +O(\ep^2)
\label{hDB}
\eeq  
It is not hard to see that it also satisfies the technical condition \eqref{vptech}.

\medskip
There are a number of other update rules. In {\bf Fermi updating}, a site $x$ and a neighbor $y$
are chosen at random. Then $x$ adopts $y$'s strategy with probability
$$
\left[ 1 + \exp( \beta(F_x - F_y ))\right]^{-1}
$$
where $F_z = \sum_w p(w-z) G(\xi(z),\xi(w))$. The main reason for interest in this rule is the Ising model like phase transition
that occurs, for example in Prisoner's Dilemma games
as $\beta$ is increased, see \cite{SzTo98, HaSz05, SzVuSz05}.

In {\bf Imitate the best} one adopts the strategy of the neighbor with the largest fitness.
All of the matrices $\bar G = 1 + w G$ have the same dynamics, so this is
not a voter model perturbation. In discrete time (i.e., with synchronous updates) the
process is deterministic. See Evilsizor and Lanchier \cite{EvLa} for recent work on this version.

\section{Voter model duality} \label{sec:vmsi}

In this section we set $w=0$, so $\bar G = {\bf 1}$ and the system becomes the voter model. 
Let $\xi_t(x)$ be the state of the voter at $x$ at time $t$.
The key to the study of the voter model is that
we can define for each $x$ and $t$, random walks $\zeta^{x,t}_s$, $0 \le s \le t$ that move independently 
until they hit and then coalesce to one walk, so that 
\beq
\xi_t(x) = \xi_{t-s}(\zeta^{x,t}_s)
\label{dualeq}
\eeq
Intuitively, the $\zeta^{x,t}_s$ are genealogies that trace the origin of the opinion at $x$ at time $t$. See Section \ref{sec:vmd}
for more details about this and other facts about the voter model we cite in this section.

Consider now the case of two opinions.
A consequence of this duality relation is that if we let $p(0|x)$ be the probability that
two continuous time random walks with jump distribution $p$, one starting at the origin 0, and one starting at $x$ never hit then
$$
\langle \xi(0) = 1, \xi(x) = 0 \rangle_u = p(0|x) u(1-u)
$$
To prove this, we recall that the stationary distribution $\nu_u$ is the limit in distribution as $t\to\infty$ of $\xi^u_t$, the
voter model starting with sites that are independent and $=1$ with probability $u$, and then observe that \eqref{dualeq} implies
$$
P( \xi^u_t(0) = 1, \xi^u_t(x) = 0 ) = P( \xi^u_0(\zeta^{0,t}_t) = 1, \xi^u_0(\zeta^{x,t}_t) =  0 ) = 
u(1-u) P( \zeta^{0,t}_t  \neq \zeta^{x,t}_t )
$$
Letting $t\to\infty$ gives the desired identity.

To extend this reasoning to three sites, let $p(0|x|y)$ be the probability that the three random walks never
hit and let $p(0|x,y)$ be the probability that the walks starting from $x$ and $y$ coalesce, but
they do not hit the one starting at 0. Considering the possibilities that the walks starting from $x$ and  $y$
may or may not coalesce:  
$$
\langle \xi(0) = 1, \xi(x) = 0 , \xi(y) = 0 \rangle_u  = p(0|x|y) u(1-u)^2  + p(0|x,y) u(1-u)
$$  

Let $v_1, v_2, v_3$ be independent and chosen according to the distribution $p$ and let  $\kappa = 1/P( v_1+v_2 = 0 )$ be the ``effective number of neighbors''
defined in \eqref{enn}.  The coalescence probabilities satisfy some remarkable identities
that will be useful for simplifying formulas later on.  
Since the $v_i$ have the same distribution as steps in the random walk, simple arguments given in Section \ref{sec:coalid} show that
\begin{align}
&p(0|v_1) = p(0|v_1+v_2) = p(v_1|v_2) 
\label{Ted2}\\
&p(v_1|v_2+v_3) = ( 1 + 1/\kappa) p(0|v_1) 
\label{Ted3}
\end{align}
Here $p(0|v_1) = \sum_x p(0|x)P( v_1=x)$, $p(0|v_1+v_2) = \sum_{x,y} p(0|x+y) P(v_1=x)P(v_2=y)$, etc. 

It is easy to see that for any $x,y,z$ coalescence probabilities must satisfy
\beq
p(x|z) = p(x,y|z) + p(x|y,z) + p(x|y|z)
\label{3to2}
\eeq
Combining this with the identities for \eqref{Ted2}, \eqref{Ted3} leads to
\begin{align}
&p(0,v_1|v_1+v_2) = p(0,v_1+v_2|v_1) =  p(v_1,v_1+v_2|0) 
\label{BDswitch}\\
& p(v_1,v_2|v_2+v_3) = p(v_2,v_2+v_3|v_1) = p(v_1,v_2+v_3|v_2) + (1/\kappa)p(0|v_1)
\label{DBswitch}
\end{align}
All of the identities stated here are proved in Section \ref{sec:coalid}.
From \eqref{3to2} and \eqref{BDswitch} it follows that
\beq
p(0|v_1) = 2p(x,y|z) + p(0|v_1|v_1+v_2)
\label{BD1}
\eeq
where $x,y,z$ is any ordering of $0, v_1, v_1+v_2$. 
Later, we will be interested in $p_1=p(0|v_1|v_1+v_2)$ and $p_2 =p(0|v_1,v_1+v_2)$. In this case,
\eqref{BD1} implies
\beq
2p_2 + p_1 = p(0|v_1)
\label{BDonly2c}
\eeq

Similar reasoning to that used for \eqref{BD1} gives
\begin{align}
p(v_1|v_2) (1+1/\kappa) & = 2 p(v_2,v_2+v_3|v_1) + p(v_1|v_2|v_2+v_3) \label{DB1} \\
& = 2 p(v_1,v_2|v_2+v_3) + p(v_1|v_2|v_2+v_3) \label{DB2} \\
p(v_1|v_2) (1-1/\kappa) & = 2 p(v_1,v_2+v_3|v_2) + p(v_1|v_2|v_2+v_3) \label{DB3}
\end{align}
Later, we will be interested in $\bar p_1=p(v_1|v_2|v_2+v_3)$ and $\bar p_2 =p(v_1|v_2,v_2+v_3)$.
In this case, \eqref{DB1} implies that
\beq
2\bar p_2 + \bar p_1 = p(v_1|v_2)(1+1/\kappa)
\label{DBonly2c}
\eeq
We will also need the following consequence of \eqref{DB1} and \eqref{3to2}
\beq
\bar p_2 - p(v_1|v_2)/\kappa = p(v_1|v_2) - \bar p_1 - \bar p_2 = p(v_1,v_2+v_3|v_2) > 0
\label{cpos}
\eeq

Work of Tarnita et al.~\cite{TOAFN,TWN} has shown that when selection is weak (i.e., $w \ll 1/N$
where $N$ is the population size) one can determine whether a strategy in an $k$-strategy game
is favored by selection (i.e., has frequency $> 1/k$) by using an inequality that is 
linear in the entries of the game matrix that involves one ($k=2$) or two ($k\ge 3$) constants that
depend on the spatial structure. Our analysis will show that on $\ZZ^d$, $d \ge 3$,
the only aspects of the spatial structure relevant for a complete analysis of the game with small selection
are $p(0|v_1)$ and $p(0|v_1|v_1+v_2)$ for Birth-Death updating and $\kappa$,
$p(v_1|v_2)$ and $p(v_1|v_2|v_2+v_3)$ for Death-Birth updating. 

The coalescence probabilities $p(0|v_1)=p(v_1|v_2)$ are easily calculated since the difference
between the positions of the two walkers is a random walk. Let $S_n$ be the discrete time
random walk that has jumps according to $p$ and let 
$$
\phi(t) = \sum_x e^{itx} p(x)
$$
be its characteristic function (a.k.a~Fourier transform). The inversion formula implies
$$
P(S_n=0) = (2\pi)^{-d} \int_{(-\pi,\pi)^d} \phi^n(t) \, dt
$$
so summing we have
$$
\chi \equiv \sum_{n=0}^\infty P(S_n=0) =   (2\pi)^{-d} \int_{(-\pi,\pi)^d} \frac{1}{1-\phi(t)} \, dt
$$
For more details see pages 200--201 in \cite{PTE}. Since the number of visits to 0 has a geometric 
distribution with success probability $p(0|v_1)$ it follows that
$$
p(0|v_1) = \frac{1}{\chi}
$$
In the three dimensional nearest neighbor case it is know that $\chi = 1.561386\ldots$ so we have
$$
p(0|v_1) = p(v_1|v_2) = 0.6404566
$$

To evaluate $p(0|v_1|v_1+v_2)$ and $p(v_1|v_2|v_2+v_3)$ we have to turn to simulation. Simulations of
Yuan Zhang suggest that $p(0|v_1|v_1+v_2) \in [0.32,0.33]$ and $p(v_1|v_2|v_2+v_3) \in [0.34,0.35]$.

\section{PDE limit} \label{sec:pdelim}

In a homogeneously mixing population the frequencies of the strategies in an evolutionary game follow the 
replicator equation, see e.g., Hofbauer and Sigmund's book \cite{HS98}: 
\beq
\frac{du_i}{dt}  = \phi^i_R(u) \equiv u_i \left( \sum_k G_{i,k} u_k - \sum_{j,k} u_j G_{j,k} u_k \right). 
\label{rep}
\eeq

\mn
{\bf Birth-Death dynamics.} Let
$$
 p_1 =  p(0|v_1|v_1+v_2) \quad\hbox{and}\quad p_2 =  p(0|v_1,v_1+v_2).
$$
In this case the limiting PDE in Theorem \ref{hydro} is 
$\partial u_i/\partial_t = (1/2d) \Delta u + \phi^i_B(u)$ where
\beq
\phi^i_B(u) =   p_1\phi^i_R(u) + p_2 \sum_{j\neq i}  u_i u_j (G_{i,i}-G_{j,i}+ G_{i,j}-G_{j,j}). 
\label{phiBi}
\eeq
See Section \ref{sec:derPDE} for a proof.
Formula \eqref{BD1} implies that 
$$
2p(0|v_1,v_1+v_2) = p(0|v_1) - p(0|v_1|v_1+v_2),
$$
so it is enough to know the two probabilities on the right-hand side.

If coalescence is impossible then $p_1=1$ and $p_2=0$ and $\phi^i_B = \phi^i_R$.
There is a second more useful connection to the replicator equation. Let
$$
A_{i,j} = \frac{ p_2 }{ p_1   } (G_{i,i} + G_{i,j} - G_{j,i} - G_{j,j} ).
$$
This matrix is skew symmetric $A_{i,j} = -A_{j,i}$ so $\sum_{i,j} u_i A_{i,j} u_j = 0$ and it follows that
$\phi^i_B(u)$ is $p_1$ times the RHS of the replicator equation for the game matrix $A+G$. 
This observation is due to Ohtsuki and Nowak \cite{ON06} who studied the limiting ODE
that arises from the nonrigorous pair approximation. In their case, the perturbing matrix, see their (14), is
$$
\frac{1}{\kappa-2} (G_{i,i} + G_{i,j} - G_{j,i} - G_{j,j} ).
$$
To connect the two formulas note if space is a tree in which each site has $\kappa$
neighbors then $p(0,v_1) = 1/(\kappa-1)$. Under the pair approximation, the coalescence of 0 and $v_1$
is assumed independent of the coalescence of $v_1$ and $v_1+v_2$, so
$$
\frac{p_2}{p_1} = \frac{p(0|v_1,v_1+v_2)}{p(0|v_1|v_1+v-2)}  = \frac{p(v_1,v_1+v_2)}{p(v_1|v_1+v-2)} = \frac{1}{\kappa-2}.
$$  

\mn
{\bf Death-Birth Dynamics.} Let
$$
\bar p_1 =  p(v_1|v_2|v_2+v_3) \quad\hbox{and}\quad \bar p_2 =  p(v_1|v_2,v_2+v_3).
$$
Note that in comparison with $p_1$ and $p_2$, 0 has been replaced by $v_1$ and then the other two $v_i$ have been renumbered.
In this case the limiting PDE is $\partial u_i/\partial t = (1/2d) \Delta u + \phi^i_D(u)$ where
\begin{align}
\phi^i_D(u) &= \bar p_1 \phi^i_R(u)   + \bar p_2 \sum_{j\neq i}  u_i u_j (G_{i,i} -G_{j,i}+ G_{i,j} -G_{j,j}) 
\nonumber\\
& -(1/\kappa) p(v_1|v_2)  \sum_{j \neq i}  u_i u_j (  G_{i,j}-G_{j,i}).
\label{phiDi}
\end{align} 
Again see Section \ref{sec:derPDE} for a proof.
The first two terms are the ones in \eqref{phiBi}.
The similarity is not surprising since the numerators of the flip rates in \eqref{rijDB} are the flip rates in \eqref{rijBD}.
The third term comes from the denominator in \eqref{rijDB}. 
Formula \eqref{DB1} implies that 
$$
2p(v_1|v_2,v_2+v_3) = (1+1/\kappa) p(v_1|v_2) - p(v_1|v_2|v_2+v_3),
$$
so it is enough to know the two probabilities on the right-hand side.

As in the Birth-Death case, if we let 
$$
\bar A_{i,j}  =  \frac{\bar p_2 }{\bar p_1  } (G_{i,i} + G_{i,j} - G_{j,i} - G_{j,j} ) 
- \frac{p(v_1|v_2)} {  \kappa \bar p_1 } (G_{i,j} - G_{j,i}),
$$
then $\phi^D_i(u)$ is $\bar p_1$ times the RHS of the replicator equation for $\bar A+G$.
Again, Ohtsuki and Nowak \cite{ON06} have a similar result for the ODE resulting from the pair approximation.
In their case the perturbing matrix, see their (23), is
\begin{align*}
\frac{1}{\kappa-2} (G_{i,i} + G_{i,j} - G_{j,i} - G_{j,j} ) - \frac{\kappa}{(\kappa+1)(\kappa-2)} (G_{i,j} - G_{j,i}).
\end{align*}
This time the connection is not exact, since under the pair approximation 
$$
\frac{p(v_1|v_2)}{\kappa \bar p_1} = \frac{p(v_1|v_2)}{\kappa p(v_1|v_2|v_2+v_3)} = 
\frac{1}{\kappa p(v_2|v_2+v_3)} = \frac{\kappa-1}{\kappa(\kappa-2)}.
$$

\section{Two strategy games} \label{sec:2sg}

We now consider the special case of a $2 \times 2$ games.
\beq
\begin{matrix}
& 1 & 2 \\
1 & \alpha & \beta \\
2 & \gamma & \delta
\end{matrix}
\label{CDPgame}
\eeq
Let $u$ denote the frequency of strategy 1. In a homogeneously mixing population, 
$u$ evolves according to the replicator equation \eqref{rep}:
\begin{align}
\frac{du}{dt} & = u \{ \alpha u + \beta (1-u) 
- u [ \alpha u + \beta (1-u) ] - (1-u) [ \gamma u + \delta (1-u) ] \} 
\nonumber\\
& = u(1-u)[\beta - \delta + \Gamma u ] \equiv \phi_R(u)
\label{rep2}
\end{align}
where we have introduced $\Gamma = \alpha - \beta - \gamma + \delta$. Note that $\phi_R(u)$ is a cubic
with roots at 0 and at 1.if there is a fixed point in $(0,1)$
it occurs at
\beq
\bar u =  \frac{\beta-\delta }{\beta- \delta  + \gamma - \alpha }
\label{fixp}
\eeq

Using results from the previous section gives the following.

\mn
{\bf Birth-Death dynamics.} 
The limiting PDE is $\partial u/\partial t = (1/2d) \Delta u + \phi_B(u)$ where 
$\phi_B(u)$ is the RHS of the replicator equation for the  game
\beq
\begin{pmatrix} \alpha & \beta + \theta \\ \gamma - \theta & \delta \end{pmatrix}
\label{2spert}
\eeq
and $\theta = (p_2/p_1)(\alpha+\beta-\gamma-\delta)$.

\mn
{\bf Death-Birth dynamics.} 
The limiting PDE is $\partial u/\partial t = (1/2d) \Delta u + \phi_D(u)$ where
$\phi_D(u)$ is the RHS of the replicator equation for the game in \eqref{2spert} but now
$$
\theta = (\bar p_2/\bar p_1)(\alpha+\beta-\gamma-\delta) - (p(v_1|v_2)/\kappa \bar p_1)(\beta-\gamma).
$$

\subsection{Analysis of $2 \times 2$ games}

Suppose that the limiting PDE is $\partial u/\partial t = (1/2d) \Delta u + \phi(u)$ 
where $\phi$ is a cubic with roots at 0 and 1. There are four possibilities

\begin{center}
\begin{tabular}{ccc}
$S_1$ & $\bar u$ attracting & $\phi'(0)>0$, $\phi'(1)>0$ \\
$S_2$ &$\bar u$ repelling  & $\phi'(0)<0$, $\phi'(1)<0$ \\
$S_3$ & $\phi<0$ on $(0,1)$   & $\phi'(0)<0$, $\phi'(1)>0$ \\
$S_4$ & $\phi>0$ on $(0,1)$   & $\phi'(0)>0$, $\phi'(1)<0$
\end{tabular}
\end{center}

\noindent
To see this, we draw a picture. For convenience, we have drawn the cubic as a piecewise linear function.

\begin{center}
\begin{picture}(220,105)
\put(15,75){$S_1$}
\put(30,75){\line(1,0){60}}
\put(30,75){\line(1,1){15}}
\put(45,90){\line(1,-1){30}}
\put(75,60){\line(1,1){15}}
\put(82,80){\vector(-1,0){14}}
\put(38,70){\vector(1,0){14}}
\put(105,75){$S_2$}
\put(120,75){\line(1,0){60}}
\put(120,75){\line(1,-1){15}}
\put(135,60){\line(1,1){30}}
\put(165,90){\line(1,-1){15}}
\put(142,80){\vector(-1,0){14}}
\put(158,70){\vector(1,0){14}}
\put(15,30){$S_3$}
\put(30,30){\line(1,0){60}}
\put(30,30){\line(2,-1){30}}
\put(60,15){\line(2,1){30}}
\put(70,35){\vector(-1,0){20}}
\put(105,30){$S_4$}
\put(120,30){\line(1,0){60}}
\put(120,30){\line(2,1){30}}
\put(150,45){\line(2,-1){30}}
\put(140,25){\vector(1,0){20}}
\end{picture}
\end{center} 

We say that {\it $i$'s take over} if for all $L$
$$
P( \xi_s(x) = i \hbox{ for all $x\in[-L,L]^d$ and all $s \ge t$}) \to 1 \quad\hbox{as $t\to\infty$.}
$$
Let $\Omega_0 = \{ \xi : \sum_x \xi(x) = \infty, \sum_x (1-\xi(x)) = \infty \}$
be the configurations with infinitely many 1's and infinitely many 0's. We say that
{\it coexistence occurs} if there is a stationary distribution $\nu$ for the spatial model with $\nu(\Omega_0)=1$. 
The next result follows from Theorems 1.4 and 1.5 in \cite{CDP}. The PDE assumptions and the other conditions
can be checked as in the arguments in Section 1.4 of  \cite{CDP} for the Lotka-Volterra system.    

\begin{theorem} \label{CDPG}
If $\ep < \ep_0(G)$, then:\hbr 
In case $S_3$, 2's take over. In case $S_4$, 1's take over. \hbr
In case $S_2$, 1's take over if $\bar u < 1/2$, and 2's take over if $\bar u > 1/2$.\hbr
In case $S_1$, coexistence occurs. Furthermore, if $\delta>0$ and $\ep < \ep_0(G,\delta)$ then 
any stationary distribution with $\nu(\Omega_0)=1$ has
$$
\sup_x |\nu(\xi(x) =1 )- \bar u| < \delta.
$$
\end{theorem}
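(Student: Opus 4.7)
The plan is to deduce this theorem from Theorems 1.4 and 1.5 of \cite{CDP} applied to the reaction--diffusion limit supplied by Theorem \ref{hydro}. The first step, which I expect to be routine, is to certify that the two--strategy spatial game falls within the voter--model perturbation framework of Section \ref{sec:vmp}. The flip-rate computations \eqref{rijBD}--\eqref{hDB} already display the rates in the form \eqref{frates} with perturbations satisfying \eqref{vptech}; in Birth--Death the perturbation is $\ep$--independent and in Death--Birth the residual is $O(\ep^2)$, matching hypothesis (1.41) of \cite{CDP} with $r=2$. By Theorem \ref{hydro} the reaction term $\phi$ equals $\phi_B$ or $\phi_D$, and by the remarks following \eqref{2spert} this is the replicator equation for the perturbed $2 \times 2$ game \eqref{2spert}, so $\phi(u)=u(1-u)(a+bu)$ is a cubic vanishing at $0$ and $1$ with at most one interior zero $\bar u$. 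Crucially, the classification $S_1$--$S_4$ is read off directly from the signs of $\phi'(0)$ and $\phi'(1)$ of the \emph{perturbed} cubic, which is exactly the classification needed to invoke \cite{CDP}.

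For the three noncoexistence cases, I would use Theorem 1.5 of \cite{CDP}. In cases $S_3$ and $S_4$ the cubic has a single sign on $(0,1)$, and a constant subsolution/supersolution comparison shows every nontrivial PDE solution converges uniformly on compacts to the stable endpoint; Theorem 1.5 then promotes this to takeover of the corresponding strategy at the particle level. Case $S_2$ is bistable and requires the traveling--wave input of Fife--McLeod \cite{FM77} together with the attractor statements in \cite{AW75,AW78}: there is a unique (up to translation) front $U(x-ct)$ joining $0$ to $1$, and a direct computation gives
\[
\mathrm{sgn}(c)=\mathrm{sgn}\!\left(\int_0^1 \phi(u)\,du\right)=\mathrm{sgn}(1-2\bar u),
\]
so the wave advances into the region of whichever strategy has basin of attraction more than half the unit interval. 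This is precisely the structural input required by the bistable branch of Theorem 1.5 in \cite{CDP}.

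The case $S_1$ is where the genuine work lies. Theorem 1.4 of \cite{CDP} needs a convex repelling function $\psi$ on $(0,1)$ that blows up at $\{0,1\}$ and satisfies $\psi'(u)\phi(u)\le 0$ with equality only at $\bar u$. A natural candidate is the replicator relative entropy
\[
\psi(u) = \bar u\log\bigl(\bar u/u\bigr) + (1-\bar u)\log\bigl((1-\bar u)/(1-u)\bigr),
\]
for which $\psi'(u) = (u-\bar u)/(u(1-u))$ and therefore $\psi'(u)\phi(u) = (u-\bar u)(a+bu)$. Since $\phi'(0)>0$ and $\phi'(1)>0$ force $a+bu$ to change sign from $+$ to $-$ across $\bar u$, the product is $\le 0$ on $(0,1)$ with strict inequality off $\bar u$. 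Feeding this into Theorem 1.4 of \cite{CDP} yields a translation--invariant stationary distribution supported on $\Omega_0$. The quantitative ``moreover'' statement comes from the same $\psi$: outside a $\delta$--neighborhood of $\bar u$ the decrease $\psi'(u)\phi(u)$ is bounded away from $0$, so the refined conclusion of Theorem 1.4 (applied as in the Lotka--Volterra treatment in Section 1.4 of \cite{CDP}) forces the one--point marginals of any nontrivial stationary distribution to concentrate within $\delta$ of $\bar u$ once $\ep<\ep_0(G,\delta)$.

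The main obstacle I anticipate is bookkeeping rather than insight: verifying that the repelling function $\psi$ meets \emph{all} the regularity and growth conditions listed in the hypotheses of Theorems 1.4 and 1.5 of \cite{CDP}, and checking that the perturbation in Death--Birth updating, which carries an extra $O(\ep^2)$ tail, still fits those hypotheses uniformly for small $\ep$. The cubic structure of $\phi$ and the log form of $\psi$ are tailor--made for this, so no new ideas beyond careful verification should be required.
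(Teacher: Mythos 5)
Your proposal is correct and follows essentially the same route as the paper: the paper's entire argument is to cite Theorems 1.4 and 1.5 of \cite{CDP} and note that the hypotheses are verified exactly as in the Lotka--Volterra analysis of Section 1.4 of \cite{CDP}. Your write-up simply makes explicit the details the paper leaves implicit (the sign-of-wave-speed computation in the bistable case $S_2$ and the relative-entropy Lyapunov function in case $S_1$, the latter being the two-strategy analogue of the Lotka--Volterra function used in \cite{CDP}), and these details check out.
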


\noindent
We write $i \gg j$ if strategy $i$ dominates strategy $j$, i.e., it gives a strictly larger payoff against every reply.  
To begin to apply Theorem \ref{CDPG}, we note that if $\phi$ is the RHS of the replicator equation for the game matrix in \eqref{CDPgame}
then the cases are: 
\beq
\begin{matrix}
  & \beta > \delta &  \beta < \delta \\
 \alpha < \gamma & S_1. \hbox{ Coexistence} & S_3.\ 2 \gg 1 \\
 \alpha > \gamma & S_4.\ 1 \gg 2 & S_2. \hbox{ Bistable}
\end{matrix}
\label{4cases}
\eeq
To check $S_1$ we draw a picture.

\begin{center}
\begin{picture}(180,90)
\put(30,30){\line(1,0){120}}
\put(30,40){\line(6,1){120}}
\put(30,70){\line(4,-1){120}}
\put(30,25){\line(0,1){10}}
\put(150,25){\line(0,1){10}}
\put(28,15){0}
\put(149,15){1}
\put(90,15){$u$}
\put(20,67){$\beta$}
\put(20,37){$\delta$}
\put(155,57){$\gamma$}
\put(155,37){$\alpha$}
\end{picture}
\end{center}
 
\mn
When the frequency of strategy 1 is $u \approx 0$ then strategy 1 has fitness $\approx \beta$ and
strategy 2 has fitness $\approx \delta$, so $u$ will increase. The condition $\alpha<\gamma$ implies that when $u \approx 1$
it will decrease and the fixed point is attracting. When both inequalities are reversed in $S_2$, the fixed point exists
but is unstable. Finally the second strategy dominates the first in $S_3$, and the first strategy dominates the second in $S_4$.

\begin{figure}[tbp] 
  \centering
  \includegraphics[bb=0 0 300 279,width=3.83in,height=3.56in,keepaspectratio]{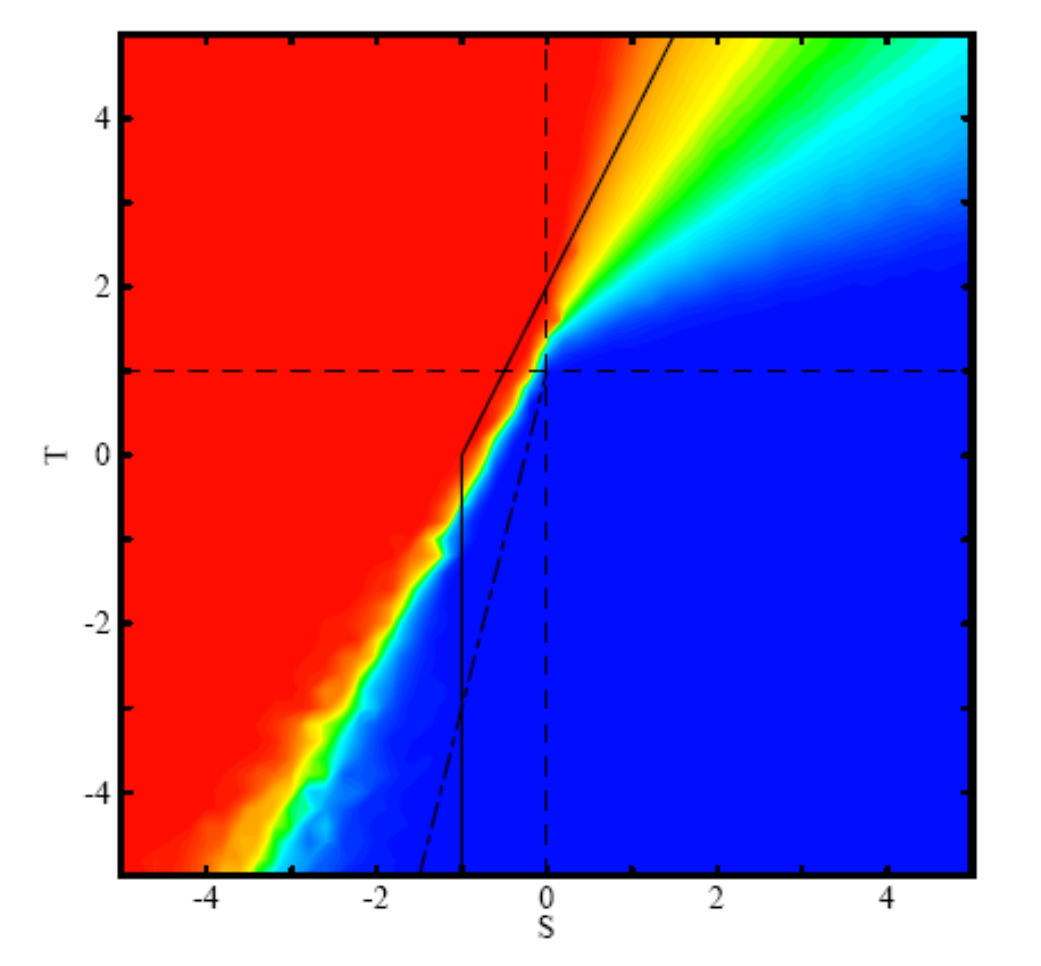}
  \caption{Phase diagram from Hauert's simulations}
  \label{fig:1dpd}
\end{figure}

\subsection{Phase diagram}

At this point, we can analyze the spatial version of any two strategy game. In the literature on $2 \times 2$ games it is common to use the following notation for payoffs, which was introduced in the classic paper by Axelrod and Hamilton \cite{AH81}.
\begin{center}
\begin{tabular}{ccc}
& C & D \\
C & $R$ & $S$ \\
D & $T$ & $P$
\end{tabular}
\end{center}
Here $T=$ temptation, $S=$ sucker payoff, $R=$ reward for cooperation, $P=$ punishment for defection.
If we assume, without loss of generality, that $R>P$ then there are 12 possible orderings for the payoffs.
However, from the viewpoint of Theorem \ref{CDPG}, there are only four cases. 

Hauert \cite{Ha02} simulates spatial games with $R=1$ and $P=0$ for a large number of values of $S$ and $T$. 
He considers three update rules: (a) switch to the strategy of the best neighbor, (b) pick a neighbor's strategy with
probability proportional to the difference in scores, and (c) pick a neighbor's strategy with probability
proportional to its fitness, or in our terms Death-Birth updating. 
He considers discrete and continuous time updates using the von Neumann neighborhood
(four nearest neighbors) and the Moore neighborhood (which also includes the diagonally adjacent neighbors). 
The picture most relevant to our investigation here is the graph in the lower left corner of his Figure 5, reproduced here as Figure 1, which shows equilibrium frequencies of the two strategies in continuous time for update rule (c)
on the von Neumann neighborhood. Similar pictures can be found in the work of Roca, Cuesta, and Sanchez \cite{RCS09a,RCS09b}.

Our situation is different from his, since the games we consider 
are small perturbations of the voter model game ${\bf 1}$, but as we will see, the qualitative features of the phase diagrams are
the same. Under either update the game matrix changes to 
\begin{center}
\begin{tabular}{cll}
& \quad C & \quad D \\
C & $\alpha = R$ & $\beta = S+\theta$ \\
D & $\gamma = T - \theta$ & $\delta = P$
\end{tabular}
\end{center}

\mn
{\bf Birth-Death updating.} In this case 
\beq
\theta = \frac{p_2}{p_1}(R+S-T-P)
\label{thBD}
\eeq
We will now find the boundaries between the four cases using \eqref{4cases}. Letting $\lambda = p_2/p_1 \in (0,1)$,
we have $\alpha=\gamma$ when
$$
R-T = - \theta = - \lambda (R+S-T-P)
$$
Rearranging gives $\lambda(S-T) = (1+\lambda)(T-R)$, and we have
\beq
T-R = \frac{\lambda}{\lambda+1}(S-P)
\label{L1BD}
\eeq
Repeating this calculation shows that $\beta=\delta$ when 
\beq
T-R = \frac{\lambda+1}{\lambda}(S-P)
\label{L2BD}
\eeq
This leads to the four regions drawn in Figure 2. Note that the coexistence region is smaller than in
the homogeneously mixing case. 

In the coexistence region, the equilibrium is
\beq
\bar u = \frac{S-P+\theta}{S-P+T-R}
\label{neweqBD}
\eeq
Plugging in the value of $\theta$ from \eqref{thBD} this becomes
\beq
\bar u = \frac{(1+\lambda)(S-P) + \lambda(R-T)}{S-P+T-R}
\label{eqBD}
\eeq
Note that in the coexistence region, $\bar u$ is constant on lines through $(S,T)=(P,R)$.

In the lower left region where there is bistability, 1's win if $\bar u < 1/2$
or what is the same if strategy 1 is better than strategy 2 when $u=1/2$, that is,
$$
R + S + \theta > T-\theta + P
$$
Plugging in the value of $\theta$ this becomes $(1+2\lambda)(R+S-T-P) > 0$ or 
\beq
R-T > P-S.
\label{1swinBD}
\eeq  
Writing this as $R+S > T+P$, we see that the population converges to strategy 1
when it is ``risk dominant'', a term introduced by Harsanyi and Selten \cite{HarSel}.
Note that bistablity in the replicator equation disappears in the
spatial model, an observation that goes back to Durrett and Levin \cite{DL94}.

\begin{figure}[h]
\begin{center}
\begin{picture}(320,240)
\put(20,100){\line(5,1){200}}
\put(200,145){$T-R = \frac{\lambda}{\lambda+1}(S-P)$}
\put(100,20){\line(1,5){40}}
\put(145,210){$T-R = \frac{\lambda+1}{\lambda}(S-P)$}
\put(160,165){coexist}
\put(60,165){$2 \gg 1$}
\put(160,65){$1 \gg 2$}
\put(5,20){bistable}
\put(120,120){\line(-1,-1){85}}
\put(33,80){2's}
\put(30,70){win}
\put(70,30){win}
\put(72,40){1's}
\put(5,125){$T=R$}
\put(105,5){$S=P$}
\thicklines
\put(120,20){\line(0,1){200}}
\put(20,120){\line(1,0){200}}
\end{picture}
\caption{Phase diagram for Birth-Death Updating.}
\end{center}
\label{fig:BD}
\end{figure}

{\bf Death-Birth updating.} The phase diagram is similar to that for Birth-Death updating but
the regions are shifted over in space. Since the algebra in the derivation is messier,
we state the result here and hide the details away in Section \ref{sec:2sDB}. 
If we let $\mu= \bar p_2/\bar p_1$, $\nu = p(v_1|v_2)/\kappa \bar p_1$,
$$
P^* =  P - \frac{\nu (R-P)}{1+2(\mu-\nu)}, 
\qquad  R^* = R + \frac{\nu(R-P)}{1+2(\mu-\nu)}, 
$$
and let $\lambda = \mu-\nu$, then the two lines $\alpha=\gamma$ and $\beta=\delta$ can be written as 
$$
T-R^* = \frac{\lambda}{1+\lambda}(S-P^*) 
\quad\hbox{and}\quad T-R^* = \frac{1+\lambda}{\lambda}(S-P^*).
$$
This leads to the four regions drawn in Figure 3.
In the coexistence region, the equilibrium $\bar u$ is constant on lines through $(S,T)=(R^*,P^*)$.
In the lower left region where there is bistability, 1's win if 
$$ 
R^*-T > P^*-S.
$$ 
Even though Hauert's games do not have weak selection, there are many similarities with Figure 1.
The equilibrium frequencies are linear in the coexistence region, and in the lower left,
the equilibrium state goes from all 1's to all 2's over a very small distance.

\begin{figure}[ht]
\begin{center}
\begin{picture}(240,240)
\put(20,100){\line(5,1){200}}
\put(100,20){\line(1,5){40}}
\put(117,117){$\bullet$}
\put(75,125){$(P^*,R^*)$}
\put(200,145){$T-R^* = \frac{\lambda}{\lambda+1}(S-P^*)$}
\put(145,210){$T-R^* = \frac{\lambda+1}{\lambda}(S-P^*)$}
\put(160,165){coexist}
\put(60,165){$2 \gg 1$}
\put(160,65){$1 \gg 2$}
\put(5,20){bistable}
\put(120,120){\line(-1,-1){85}}
\put(33,80){2's}
\put(30,70){win}
\put(70,30){win}
\put(72,40){1's}
\put(5,115){$T=R$}
\put(115,5){$S=P$}
\thicklines
\put(130,20){\line(0,1){200}}
\put(20,110){\line(1,0){200}}
\end{picture}
\caption{Phase diagram for Death-Birth Updating.}
\end{center}
\label{fig:DB}
\end{figure}

\subsection{Tarnita's formula} 

Tarnita et al.~\cite{TOAFN} say that strategy $C$ is favored over $D$ in a structured population, and write $C > D$, if the frequency of $C$ in equilibrium is $> 1/2$ in the game $\bar G = {\bf 1} + wG$ when $w$ is small. Assuming that

\ms
(i) the transition probabilities are differentiable at $w=0$,

\ms
(ii) the update rule is symmetric for the two strategies, and

\ms
(iii) strategy $C$ is not disfavored in the game given by the matrix
$$
\begin{matrix} 
& C & D \\
C & 0 & 1 \\
D & 0 & 0 
\end{matrix}
$$

\mn
they argued that 

\mn
I. {\it $C>D$ is equivalent to $\sigma R + S > T + \sigma P$
where $\sigma$ is a constant that only depends on the spatial structure and update rule.}

\medskip
By using results for the phase diagram given above, we can show that 

\begin{theorem} \label{TF2}
I holds for the Birth-Death updating with $\sigma=1$ and 
for the Death-Birth updating with $\sigma = (\kappa+1)/(\kappa-1)$.
\end{theorem}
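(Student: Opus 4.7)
The plan is to apply Theorem \ref{CDPG} to the perturbed game \eqref{2spert}, with $\alpha=R$, $\beta=S+\theta$, $\gamma=T-\theta$, $\delta=P$, in each of its four regions $S_1$--$S_4$, and to show that the statement ``equilibrium frequency of $C$ exceeds $1/2$'' reduces in every region to a single linear inequality that can be rearranged into $\sigma R+S>T+\sigma P$.

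In the dominance cases Theorem \ref{CDPG} is immediate: in $S_4$ the $C$'s take over so $C>D$, in $S_3$ the $D$'s take over so $C<D$. In the coexistence region $S_1$, Theorem \ref{CDPG} says that for small $w$ every nontrivial stationary distribution has densities within $o(1)$ of
$$
\bar u=\frac{\beta-\delta}{(\beta-\delta)+(\gamma-\alpha)}=\frac{(S-P)+\theta}{(S-P)+(T-R)},
$$
so ``$C>D$'' becomes $\bar u>1/2$, i.e.\ $(R-P)+(S-T)+2\theta>0$. In the bistable region $S_2$, the risk-dominance criterion \eqref{1swinBD} (together with its Death-Birth analogue derived in Section \ref{sec:2sDB}) asserts that the $C$'s win iff $(R^{*}-P^{*})+(S-T)>0$, where $R^{*}=R$, $P^{*}=P$ in the Birth-Death case. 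Writing $\theta=\mu(R+S-T-P)-\nu(S-T)$ with $(\mu,\nu)=(p_2/p_1,0)$ for Birth-Death and $(\mu,\nu)=(\bar p_2/\bar p_1,\,p(v_1|v_2)/(\kappa\bar p_1))$ for Death-Birth, and using $R+S-T-P=(R-P)+(S-T)$, the coexistence condition becomes
$$
(1+2\mu)(R-P)+(1+2\mu-2\nu)(S-T)>0.
$$
Identity \eqref{cpos} gives $\bar p_2>p(v_1|v_2)/\kappa$ and hence $\mu>\nu$, so the factor $1+2(\mu-\nu)$ is positive and one may divide to obtain $\sigma(R-P)+(S-T)>0$ with $\sigma=(1+2\mu)/(1+2(\mu-\nu))$.

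To close the argument I evaluate $\sigma$ and check the non-coexistence regions. For Birth-Death, $\nu=0$ gives $\sigma=1$ directly. For Death-Birth, \eqref{DBonly2c} yields
$$
1+2\mu=\frac{2\bar p_2+\bar p_1}{\bar p_1}=\Bigl(1+\tfrac 1\kappa\Bigr)\frac{p(v_1|v_2)}{\bar p_1}=(\kappa+1)\nu,
$$
so $1+2(\mu-\nu)=(\kappa-1)\nu$ and $\sigma=(\kappa+1)/(\kappa-1)$. For the dominance regions, adding the two defining inequalities of $S_4$ (resp.\ $S_3$) produces exactly $(R-P)+(S-T)+2\theta>0$ (resp.\ $<0$), which is the same inequality already handled. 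For the bistable region, the identity $R^{*}-P^{*}=\sigma(R-P)$ follows from the definitions of $R^{*},P^{*}$ in Section \ref{sec:2sg} combined with the expression for $\sigma$ just derived, so the risk-dominance criterion coincides with $\sigma(R-P)+(S-T)>0$. The only real difficulty is bookkeeping in the Death-Birth evaluation of $\sigma$; conceptually the theorem merely records that the two boundary lines in the phase diagrams of Section \ref{sec:2sg} and the ``favored by selection'' diagonal all pass through the same pivot point $(P^{*},R^{*})$.
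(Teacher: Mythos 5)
Your proposal is correct and follows essentially the same route as the paper: reduce the question in each region of the phase diagram (via Theorem \ref{CDPG}) to the single inequality $(R-P)+(S-T)+2\theta>0$, and then use the identity $2\bar p_2+\bar p_1=p(v_1|v_2)(1+1/\kappa)$ to evaluate $\sigma=(1+2\mu)/(1+2(\mu-\nu))=(\kappa+1)/(\kappa-1)$ in the Death-Birth case. Your explicit check of the dominance regions $S_3,S_4$ by adding the defining inequalities, and the observation $R^*-P^*=\sigma(R-P)$ unifying the bistable case, are minor streamlinings of the same computation carried out in Section \ref{sec:2sDB}.
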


\begin{proof}
For Birth-Death updating it follows from \eqref{1swinBD} that this is the correct condition
in the bistable quadrant. By \eqref{eqBD}, in the coexistence quadrant,
$$
\bar u = \frac{(1+\lambda)(S-P) + \lambda(R-T)}{S-P+T-R} 
$$
Cross-multiplying we see that $\bar u > 1/2$ when we have
$$
0 < (1/2+ \lambda)(S-P) +(\lambda +1/2)(R-T) = (1/2 + \lambda)(R+S-T-P)
$$
Thus in both quadrants the condition is $R+S>T+P$.
The proof of the formula for Death-Birth updating is similar but requires more algebra, so again we hide the details
away in Section \ref{sec:2sDB}. Since the derivation of the formula from the phase diagram in the Death-Birth case is messy, 
we also give a simple self-contained proof of Theorem \ref{TF2} in this case. 
\end{proof}

\noindent

\subsection{Concrete examples}

In this, we present calculations for concrete examples to complement the general conclusions from the phase diagram.
Before we begin, recall that the original and modified games are
$$
\begin{matrix}
& C & D \\
C & R & S \\
D & T & P
\end{matrix}
\hphantom{xxxxxx}
\begin{matrix}
& C & D \\
C & \alpha=R & \beta=S+\theta \\
D & \gamma=T-\theta & \delta=P
\end{matrix}
$$
where $\theta = (p_2/p_1)(R+S-T-P)$ for Birth-Death updating and  
$$
\theta = \frac{\bar p_2}{\bar p_1}(R+S-T-P) - \frac{p(v_1|v_2)}{\kappa \bar p_1} (S-P) 
$$
for Death-Birth updating.

\begin{example}
{\bf Prisoner's Dilemma.} As formulated in Example \ref{PD}  
\begin{center}
\begin{tabular}{ccc}
& C & D \\
C & $R=b-c$ & $S=-c$ \\
D & $T=b$ & $P=0$
\end{tabular}
\end{center}
Under either updating the matrix changes to 
\begin{center}
\begin{tabular}{ccc}
& C & D \\
C & $\alpha=b-c$ & $\beta=-c+\theta$ \\
D & $\gamma=b- \theta$ & $\delta=0$
\end{tabular}
\end{center}
In the Birth-Death case, $\theta = (p_2/p_1)(b-c-c-b) = -2cp_2/p_1$. In this modified game 
$\Gamma=\alpha-\beta-\gamma+\delta=0$ and $\beta-\delta<0$ so 
recalling $\phi_R(u) = u(1-u)[\beta-\delta +\Gamma u]$, the cooperators always die out.
Under Death-Birth updating 
$$
\theta = -2c\frac{\bar p_2}{\bar p_1} - (-c-b) \frac{p(v_1|v_2)}{\kappa \bar p_1}.
$$  
Again $\Gamma=0$ so the victor is determined by the sign of 
$$
\bar p_1(\beta-\delta) = - c \bar p_1 - 2c \bar p_2 +(c+b) \frac{p(v_1|v_2)}{\kappa}
$$
Identity \eqref{DB1} implies that $2\bar p_2 + \bar p_1 = p(v_1|v_2) (1 + 1/\kappa)$ so cooperators will persist if
$$
(-c + b/\kappa) p(v_1|v_2) > 0.
$$ 
Since $p(v_1|v_2)>0$ the condition is just $b/c > \kappa$ giving a proof of the result of Ohtsuki et al.~\cite{OHLN},
which has already appeared as Corollary 1.14 in Cox, Durrett, and Perkins \cite{CDP}.
\end{example}

\begin{example}
{\bf Nowak and May \cite{NoMa92}} considered the ``weak'' Prisoner's Dilemma game with payoff matrix:
\begin{center}
\begin{tabular}{ccc}
& C & D \\
C & $R=1$ & $S=0$ \\
D & $T=b$ & $P=0$
\end{tabular}
\end{center}
As you can see from Figure 3, if Death-Birth updating is used, these games will show a dramatic departure from
the homogeneously mixing case. Nowak and May used ``imitate the best dynamics'' so the process was deterministic
and there are only finitely many different evolutions. See Section 2.1 of \cite{NoMa93} for locations of the transitions 
and pictures of the various cases. When $1.8 < b < 2$, if the process starts with a single $D$ in a sea 
of C's, and color the sites based on the values of $(\xi_{n-1}(x), \xi_n(x))$ a kaleidoscope of Persian carpet style patterns results.
As Huberman and Glance \cite{HG93} have pointed out, these patterns disappear if asynchronous updating is used. However, work of
Nowak, Bonhoeffer and May \cite{NBM94,NBM94b} showed that their conclusion that spatial structure enhanced cooperation
remained true with stochastic updating or when games were played on random lattices.   
\end{example}

\begin{example}
{\bf The Harmony game} has $P< S$ and $T < R$. In this game strategy 1 dominates strategy 2,
but in contrast to Prisoner's dilemma the payoff for the outcome $(C,C)$ is the largest in the matrix.  
Licht \cite{Licht} used this game to explain the proliferation of MOUs (memoranda of understanding) between securities agencies
involved in international antifraud regulation. 
From Figures 2 and 3, we see that in the spatial model cooperators take over the system.  
\end{example}

\begin{example}
{\bf Snowdrift game.} In this game, two motorists are stuck in their cars on opposite sides of a snowdrift.
They can get out of their car and start shoveling ($C$) or do nothing ($D$). The payoff matrix is
\begin{center}
\begin{tabular}{ccc}
& C & D \\
C & $R=b-c/2$ & $S=b-c$ \\
D & $T=b$ & $P=0$
\end{tabular}
\end{center}
That is, if both players shovel then the work is cut in half, but if one player cooperates and the other defects then
the $C$ player gains the benefit of sleeping in his bed rather than in his car.

The story behind the game makes it sound frivolous, however, in a paper published in Nature \cite{GYvO},
the snowdrift game has been used to study ``facultative cheating in yeast.'' For yeast to grow on sucrose, a disaccharide,
the sugar has to be hydrolyzed, but when a yeast cell does this, most of the resulting
monosaccharide diffuses away. None the less, due to the fact that the hydrolyzing cell reaps some benefit,
cooperators can invade a population of cheaters.

If $b>c$ then the game has a mixed strategy equilibrium, which by 
\eqref{fixp} is
$$
\frac{S-P}{S-P+T-R} = \frac{b-c}{b-(c/2)}
$$
Under either update rule the modified payoff becomes
\begin{center}
\begin{tabular}{ccc}
& C & D \\
C & $b-c/2$ & $b-c+\theta$ \\
D & $b-\theta$ & $0$
\end{tabular}
\end{center}
and using \eqref{fixp} again the equilibrium changes to
$$
\bar u = \frac{S-P+\theta}{S-P+T-R} = \frac{b-c+\theta}{b - (c/2)}
$$
assuming that this stays in $(0,1)$. If this $\bar u>1$, then 1 becomes an attracting fixed point; 
if $\bar u < 0$, then 0 is attracting. 

If $\theta>0$ then spatial structure enhances cooperation.
If we use Birth-Death updating:  
$$
\theta = \frac{p_2}{p_1}(R+S-T-P) 
= \frac{p_2}{p_1}(b - 3c/2)
$$
If we use Death-Birth updating:  
\begin{align*}
\theta & = \frac{\bar p_2}{\bar p_1}(R+S-T-P) - \frac{p(v_1|v_2)}{\kappa \bar p_1} (S-P) \\
& = \frac{\bar p_2}{\bar p_1}(b-3c/2) - \frac{p(v_1|v_2)}{\kappa \bar p_1} (b-c) 
\end{align*}
Hauert and Doebeli \cite{HaDo} have used simulation to show that spatial structure can inhibit the evolution of cooperation in the snowdrfit game. One of their examples has $R=1$, $S=0.38$, $T=1.62$, and $P=0$ in which case $\theta <0$ for both update rules. At the end of their article they conclude that ``space should benefit cooperation for low cost to benefit ratios,'' which is consistent with our calculation. For more discussion of the contrasting effects on
cooperation in Prisoner's Dilemma and Snowdrift games, see the review by Doebeli and Hauert \cite{DoHa}.
\end{example}

\begin{example}
{\bf Hawk-dove game.} As formulated in Example \ref{HD} the payoff matrix is
\begin{center}
\begin{tabular}{ccc}
& Hawk & Dove \\
Hawk & $(V-C)/2$ & $V$ \\
Dove & $0$ & $V/2$
\end{tabular}
\end{center}
Killingback and Doebeli \cite{KiDo96} studied the spatial version of the game and set $V=2$, $\beta = C/2$ to arrive at the payoff matrix
\begin{center}
\begin{tabular}{ccc}
& Hawk & Dove \\
Hawk & $1-\beta$ & $2$ \\
Dove & $0$ & $1$
\end{tabular}
\end{center}
We will assume $\beta>1$. In this case, \eqref{fixp} implies that the equilibrium mixed strategy plays Hawk with probability $\bar u = 1/\beta$. To put this game into our phase diagram we need to label the Dove strategy as Cooperate and the Hawk strategy as Defect:
\begin{center}
\begin{tabular}{ccc}
& C & D \\
C & $R=1$ & $S=0$ \\
D & $T=2$ & $P=1-\beta$
\end{tabular}
\end{center}

Under either of our update rules the payoff matrix changes to
\begin{center}
\begin{tabular}{ccc}
& H & D \\
H & $1-\beta$ & $2+\theta$ \\
D & $-\theta$ & $1$
\end{tabular}
\end{center} 
where $\theta = (p_2/p_1)(2-\beta)$ in the Birth-Death case and
$$
\theta = \frac{\bar p_2}{\bar p_1}(2-\beta) - \frac{p(v_1|v_2)}{\kappa \bar p_1} \cdot 1
$$
for Death-Birth updating. In both cases the frequency of Hawks in equilibrium is
$$
u_H = \frac{S-P+\theta}{S-P+T-R}
$$
see \eqref{neweqBD} and \eqref{neweqDB} below. In one of Killingback and Doebeli's favorite cases, $\beta = 2.2$, both of these terms are negative
in the death-birth case, so the frequency of Hawks in equilibrium is reduced, in agreement with their simulations. 

While the conclusions may be similar, the updates used in the models are very different. 
In \cite{KiDo96}, a discrete time dynamic (``synchronous updating'') was used in which the state of a cell at time $t+1$ is that of the eight Moore neighbors with the best payoff. As in the pioneering work of 
Nowak and May \cite{NoMa92} this makes the system deterministic and there are only finitely many different
behaviors as $\beta$ is varied with changes at $\beta$ passes through 9/7, 5/3, 2, and 7/3. Figure 1 in \cite{KiDo96}
shows spatial chaos, i.e., the dynamics show a sensitive dependence on initial conditions.  For more on this see \cite{KiDo98}.
In continuous time with small selection our results predict that as long as the mixed strategy equilibrium is
preserved in the perturbed game we will get coexistence of Hawks and Doves in an equilibrium with density of Hawks
and Doves close to that predicted by the perturbed game matrix.
\end{example}

\begin{example}
{\bf The Battle of the Sexes} is another game that leads to an attracting fixed point in the replicator equation. The story is that the man wants to go to a sporting event while the woman wants to go to the opera. In an age before cell phones they make their choices without communicating with each other. If $C$ is the choice to go to the other person's favorite and $D$ is go to your own then the game matrix might be
\begin{center}
\begin{tabular}{ccc}
& C & D \\
C & $R=0$ & $S=1$ \\
D & $T=2$ & $P=-1$
\end{tabular}
\end{center} 
In Hauert's scheme this case is defined by $T>S>R>P$ in contrast to the inequalities $T>R>S>P$ for
the snowdrift game, and Hawks-Doves.

Despite the difference in the inequalities the results are very similar. Again in either case the modified
payoffs in this particular example are 
\begin{center}
\begin{tabular}{ccc}
& C & D \\
C & $\alpha=0$ & $\beta=1+\theta$ \\
D & $\gamma=2-\theta$ & $\delta=-1$
\end{tabular}
\end{center} 
Under Birth-Death updating $\theta = 0$ since $R+S-T-P=0$, while for Death-Birth updating 
$$
\theta = - \frac{2p(v_1|v_2)}{\kappa \bar p_1} < 0.
$$
Since the equilibrium changes to
$$
\bar u = \frac{S-P+\theta}{S-P+T-R} 
$$
spatial structure inhibits cooperation.
\end{example}

\begin{example}
{\bf Stag Hunt.} 
As formulated in Example \ref{StagHunt}, 
\begin{center}
\begin{tabular}{ccc}
& Stag & Hare \\
Stag & $R=3$ & $S=0$ \\
Hare & $T=2$ & $P=1$
\end{tabular}
\end{center}

In Hauert's scheme this case is defined by the inequalities $R>T>P>S$. Since $R>T$ and $P>S$, we are in
the bistable situation. Returning to the general situation in either case the modified payoffs in this particular example are 
\begin{center}
\begin{tabular}{ccc}
& C & D \\
C & $\alpha=R$ & $\beta=S+\theta$ \\
D & $\gamma=T-\theta$ & $\delta=P$
\end{tabular}
\end{center} 
If $R+S > T+P$  then $\theta_{BD} >0$ while for Death-Birth updating 
$$
\theta_{DB} =  \theta_{BD} + (P-S) \frac{p(v_1|v_2)}{\kappa \bar p_1} > 0.
$$
So the 1's win out. If $R+S<T+P$ then $\theta_{BD} < 0$ so the 2's win,
but $\theta_{DB}$ may be positive or negative.

Under Birth-Death updating the winner is always the risk dominant strategy,
and under Death-Birth updating it often is. This is consistent with results
in the economics literature. See Kandori, Mailath, and Rob \cite{KMR}, Ellison \cite{E93}
and Blume \cite{Blume}. Blume uses a spatially explicit model with a
log-linear strategy revision, which turns the system into an Ising model.
\end{example}

\section{ODEs for the three strategy games} \label{sec:3sgode}

In this section we will prove results for the replicator equation in order to prepare for
analyzing examples in Section \ref{sec:cancer}. 
For simplicity, we will assume the game is written with zeros on the diagonal.
For the replicator equation and for the spatial model with Birth-Death updating, this entails no loss of generality.
\beq
G =
\begin{matrix} 
0 & \alpha_3 & \beta_2 \\ 
\beta_3 & 0 & \alpha_1 \\ 
\alpha_2 & \beta_1 & 0 
\end{matrix}
\label{gen33}
\eeq
Here we have numbered the entries by the strategy that is left out in the corresponding $2\times 2$ game.
It would be simpler to put the $\alpha$'s above the diagonal and the $\beta$'s below but (i) this scheme
simplifies the statement of Theorem \ref{HSrps} and (ii) this pattern of $\alpha$'s and $\beta$'s is unchanged by a cyclic permutation of the 
strategies
$$
\begin{matrix}
& 2 & 3 & 1\\ 
2 & 0 & \alpha_1 & \beta_3 \\ 
3 &\beta_1 & 0 & \alpha_2 \\ 
1 & \alpha_3 & \beta_2 & 0 
\end{matrix}
$$

It is not hard to check that in general
if the game $G$ in \eqref{gen33} has an interior fixed point it must be:
\begin{align}
\rho_1 & = (\beta_1\beta_2 + \alpha_1\alpha_3 - \alpha_1\beta_1)/D 
\nonumber\\
\rho_2 & = (\beta_2\beta_3 + \alpha_2\alpha_1 - \alpha_2\beta_2)/D 
\label{Geq}\\
\rho_3 & = (\beta_3\beta_1 + \alpha_3\alpha_2 - \alpha_3\beta_3)/D 
\nonumber
\end{align}
where $D$ is the sum of the three numerators. Conversely if the $\rho_i >0$ 
then this is an interior fixed point. See Section \ref{sec:alg3} for details.

\subsection{Special properties of replicator equations}

To study replicator equations it is useful to know some of the existing theory. To keep our
treatment self-contained we will prove many of the results we need. Our first two result are
for $n$ strategy games.

\subsubsection{Projective transformation}

\begin{theorem} \label{projt}
Trajectories $u_i(t)$ of the replicator equation for $G$ 
are mapped onto trajectories $v_i(t)$ for the replicator equation for $\hat G_{ij} = G_{ij}/m_j$
by $v_i = u_im_i/\sum_k u_km_k$. 
\end{theorem}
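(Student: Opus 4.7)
The plan is to verify the claim by direct computation: differentiate $v_i$ with respect to $t$, substitute the replicator equation for $u$, and show that the result equals a positive scalar multiple of the right-hand side of the replicator equation for $\hat{G}$ evaluated at $v$. The positive scalar is the factor by which time is reparametrized, so orbits (as unparametrized curves in the simplex) are identified.

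First I would set up notation. Write $F_i(u) = \sum_k G_{ik} u_k$ and $\bar F(u) = \sum_i u_i F_i(u)$, so $\dot u_i = u_i(F_i - \bar F)$. Set $M = M(u) = \sum_k u_k m_k$, which is strictly positive on the simplex provided $m_k > 0$ (a hypothesis we need and which can be recorded explicitly). A key preliminary observation is that under the change of variable $v_i = u_i m_i / M$, the fitnesses transform nicely: since $\hat G_{ij} = G_{ij}/m_j$,
\[
\hat F_i(v) \;=\; \sum_k \hat G_{ik} v_k \;=\; \sum_k \frac{G_{ik}}{m_k}\cdot\frac{u_k m_k}{M} \;=\; \frac{F_i(u)}{M},
\]
and hence
\[
\bar{\hat F}(v) \;=\; \sum_i v_i \hat F_i(v) \;=\; \frac{1}{M^2}\sum_i u_i m_i F_i(u).
\]

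Next I would compute $\dot v_i$ by the quotient rule. Using $\dot u_i = u_i(F_i - \bar F)$ one finds $\dot M = \sum_k m_k \dot u_k = \sum_k m_k u_k F_k - M\bar F$. Substituting and cancelling the two $\bar F$ terms gives
\[
\dot v_i \;=\; \frac{m_i\dot u_i}{M} - \frac{u_i m_i \dot M}{M^2} \;=\; \frac{m_i u_i F_i}{M} - \frac{u_i m_i}{M^2}\sum_k m_k u_k F_k \;=\; v_i\bigl(F_i - \bar{\hat F}(v)\cdot M\bigr).
\]
Factoring out $M$ and using $F_i = M\hat F_i(v)$ yields
\[
\dot v_i \;=\; M\,v_i\bigl(\hat F_i(v) - \bar{\hat F}(v)\bigr).
\]

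Finally, introduce the time change $\tau(t) = \int_0^t M(u(s))\,ds$, which is a smooth strictly increasing bijection of $[0,\infty)$ onto itself since $M$ is bounded away from $0$ on the simplex. Then $dv_i/d\tau = v_i(\hat F_i - \bar{\hat F})$, which is exactly the replicator equation for $\hat G$. The map $u \mapsto v$ also preserves the simplex $\{\sum u_i = 1,\ u_i \ge 0\}$ (sending it bijectively to itself, with the boundary faces mapped to boundary faces), so trajectories are carried to trajectories as claimed. The main (and only) obstacle is bookkeeping of the algebra; there is no conceptual difficulty, but one must be careful that the ``mapping of trajectories'' is up to a strictly positive, trajectory-dependent rescaling of time.
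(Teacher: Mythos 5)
Your proposal is correct and follows essentially the same route as the paper: differentiate $v_i$ by the quotient rule, substitute the replicator dynamics, observe the cancellation of the mean-fitness terms, and absorb the remaining factor $\sum_k u_k m_k$ into a time change. Your version is just slightly more explicit about the positivity hypothesis $m_k>0$ and the reparametrization $\tau(t)=\int_0^t M(u(s))\,ds$, which the paper leaves implicit.
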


\noindent
This comes from Exercise 7.1.3 in \cite{HS98}. We will use this in the proof of Theorem \ref{HSrps} to
transform the game so that $\alpha_i + \beta_i$ constant. Another common application is to choose $m_i = \rho_i^{-1}$
where the $\rho_i$ are the coordinates of the interior equilibrium in order to make the equilibrium uniform.

\begin{proof}
To prove this note that 
\begin{align*}
\frac{dv_i}{dt} & = \frac{u_im_i}{\sum_k u_k m_k} \left( \sum_j G_{ij} u_j - \sum_{j,k} u_j G_{j,k} u_k \right) \\
& - \frac{u_im_i}{\left(\sum_k u_k m_k\right)^2} 
\sum_\ell u_\ell m_\ell \left( \sum_j G_{\ell,j} u_j - \sum_{j,k} u_j G_{j,k} u_k \right)
\end{align*}
The second terms on the two lines cancel leaving us with
\begin{align*}
& = \frac{u_im_i}{\sum_k u_k m_k} \left( \sum_j G_{ij} u_j 
-\sum_\ell \frac{u_\ell m_\ell}{\sum_k u_k m_k} G_{\ell,j} u_j \right) \\
& = \left(\sum_k u_k m_k \right) v_i \left( \sum_j \frac{G_{ij}}{m_j} v_j -\sum_\ell v_\ell \frac{G_{\ell,j}}{m_j}  v_j \right)
\end{align*}
The factor $\sum_k u_k m_k$ is a time change, so we have proved the desired result. 
\end{proof}

\subsubsection{Reduction to Lotka-Volterra systems}

We begin with the ``quotient rule''
\beq
\frac{d}{dt} \left( \frac{u_i}{u_n} \right) = \left( \frac{u_i}{u_n} \right) [ (Gu)_i - (Gu)_n ]
\label{quotr}
\eeq

\begin{proof} Using the quotient rule of calculus,
\begin{align*}
\frac{d}{dt}\left( \frac{u_i}{u_n} \right) & = \frac{1}{u_n} u_i [ (Gu)_i - u^TGu ]
- \frac{u_i}{u_n^2} u_n [ (Gu)_n - u^TGu ] \\
& = \left( \frac{u_i}{u_n} \right) [ (Gu)_i - (Gu)_n ] 
\end{align*}
which proves the desired result.
\end{proof}

\begin{theorem} \label{reptoLV}
The mapping $v_i = u_i/u_n$ sends trajectories $u_i(t)$, $1\le i \le n$, of the replicator equation
$$
\frac{du_i}{dt} = u_i [(Gu)_i - u^T Gu]
$$
onto the trajectories $v_i(t)$, $1\le i \le n-1$, of the Lotka-Volterra equation 
$$
\frac{dv_i}{dt} = v_i \left( r_i + \sum_{j=1}^{n-1} B_{ij} v_j \right)
$$
where $r_i = G_{i,n} - G_{n,n}$ and $B_{ij} = G_{i,j} -G_{n,j}$.
\end{theorem}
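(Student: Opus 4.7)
The plan is to start from the quotient rule \eqref{quotr}, which has already been derived, and then express the factor $(Gu)_i-(Gu)_n$ in terms of the new coordinates $v_j=u_j/u_n$. Since we are only asserting that trajectories map to trajectories (not that they are traversed at the same speed), a positive time change will be allowed at the end.

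First I would write
\[
(Gu)_i - (Gu)_n = \sum_{j=1}^{n} (G_{i,j}-G_{n,j})\, u_j
= \sum_{j=1}^{n-1}(G_{i,j}-G_{n,j})\,u_j + (G_{i,n}-G_{n,n})\,u_n.
\]
Using $u_j = v_j u_n$ for $1\le j\le n-1$, this becomes
\[
(Gu)_i - (Gu)_n = u_n\Bigl( r_i + \sum_{j=1}^{n-1} B_{i,j}\,v_j \Bigr),
\]
with $r_i = G_{i,n}-G_{n,n}$ and $B_{i,j} = G_{i,j}-G_{n,j}$, matching the coefficients in the statement.

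Substituting into \eqref{quotr} gives
\[
\frac{dv_i}{dt} = u_n(t)\cdot v_i\Bigl( r_i + \sum_{j=1}^{n-1} B_{i,j}\,v_j \Bigr).
\]
Finally I would introduce the new time variable $s$ defined by $ds/dt = u_n(t)$; since $u_n(t)>0$ along any interior trajectory (the faces $\{u_k=0\}$ are invariant for the replicator equation), this is a strictly increasing, smooth reparametrization of time. In the new variable,
\[
\frac{dv_i}{ds} = v_i\Bigl( r_i + \sum_{j=1}^{n-1} B_{i,j}\,v_j \Bigr),
\]
which is the asserted Lotka--Volterra equation. Since $\sum_{i=1}^{n}u_i=1$ gives $u_n=1/(1+\sum_{j=1}^{n-1}v_j)$, the inverse map is explicit and one-to-one, so the correspondence of trajectories is a bijection on the interior of the simplex.

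The main obstacle is really only bookkeeping: separating the sum over $j$ into the $j<n$ and $j=n$ parts and recognizing that the common factor $u_n$ in $dv_i/dt$ is a positive time rescaling. There is no genuine analytic difficulty beyond verifying that $u_n$ stays strictly positive on the relevant trajectories, which follows from the standard fact that the boundary faces of the simplex are invariant under the replicator dynamics.
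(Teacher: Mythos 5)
Your proposal is correct and follows essentially the same route as the paper: both start from the quotient rule \eqref{quotr}, rewrite $(Gu)_i-(Gu)_n$ in the variables $v_j=u_j/u_n$ (the paper first subtracts $G_{n,j}$ from each column so the last row vanishes, which is just a reorganization of your direct expansion), and absorb the common factor $u_n>0$ as a time change. Your added remarks on the positivity of $u_n$ and the explicit inverse map are fine but not needed beyond what the paper states.
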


\begin{proof}
By subtracting $G_{n,j}$ from the $j$th column, we can suppose 
without loss of generality that the last row is 0.
By the quotient rule \eqref{quotr} and the fact that $v_i = u_i/u_n$
\begin{align*}
v_i' & =  v_i [ (Gu)_i - (Gu)_n ] = v_i \sum_{j=1}^n G_{i,j} v_j u_n \\
& = u_n v_i \left( G_{i,n} + \sum_{j=1}^{n-1} G_{i,j} v_j \right)
\end{align*}
The factor $u_n$ corresponds to a time change so the desired result follows.
\end{proof}

Theorem \ref{reptoLV} allows us to reduce the study of the replicator equation for three strategy
games to the study of two species Lotka-Volterra equation:
\begin{align}
dx/dt & = x(a+bx+cy) 
\nonumber\\
dy/dt & = y(d+ex+fy)
\label{LVeq}
\end{align}
If we suppose that $bf-ce \neq 0$ then the right-hand side is 0 when
\beq
x^* = \frac{dc-fa}{bf-ce} \qquad y^* = \frac{ea-bd}{bf-ce}
\label{LVfp}
\eeq

Suppose for the moment that $x^*, y^* > 0$. If we have an ODE 
$$
dx/dt = F(x,y) \qquad dy/dt = G(x,y)
$$
with a fixed point at $(x^*,y^*)$, then linearizing around the fixed point
and setting $X = x - x^*$ and $Y = y - y^*$ gives
\begin{align*}
\frac{dX}{dt} & = \frac{\partial F}{\partial x} X + \frac{\partial F}{\partial y} Y \\
\frac{dY}{dt} & = \frac{\partial G}{\partial x} X + \frac{\partial G}{\partial y} Y 
\end{align*}
In the case of Lotka-Volterra systems this is:
\begin{align*}
dX/dt & = x^*(bX+cY) \\
dY/dt & = y^*(eX+fY)
\end{align*}
In ecological competitions it is natural to assume
that $b<0$ and $f<0$ so that the population does not explode when the other species is absent.
In this case the trace of the linearization, which is the sum of the eigenvalues is  $x^*b + y^* f < 0$. 
The determinant, which is the product of the eigenvalues is $x^*y^*(bf-ce)$,
 so the fixed point will be locally attracting if $bf-ec>0$. The next result
was proved by Goh \cite{Goh} in 1976, but as Harrison \cite{HLV} explains, it has been proved many times, and was
known to Volterra in 1931.

\begin{theorem} \label{LVattr}
Suppose $b,f<0$, $bf-ec>0$, and $x^*,y^*>0$, which holds if $dc-fa>0$ and $ea-bd>0$. 
If $A,B >0$ are chosen appropriately then 
$$
V(x,y) = A(x - x^* \log x) + B( y - y^* \log y)
$$
is a Lyapunov function for the Lotka-Volterra equation, i.e., it is decreasing along solutions of \eqref{LVeq}, and hence
$x^*,y^*$ is an attracting fixed point.
\end{theorem}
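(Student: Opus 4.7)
\medskip
\noindent
\textbf{Proof proposal.} The plan is to compute $\dot V$ directly along trajectories and show that, for a suitable choice of $A, B > 0$, the resulting expression is a negative definite quadratic form in the displacement $(X,Y)=(x-x^*,y-y^*)$. First I would note that
\[
\frac{\partial V}{\partial x} = A\left(1-\frac{x^*}{x}\right), \qquad \frac{\partial V}{\partial y} = B\left(1-\frac{y^*}{y}\right),
\]
so by the chain rule and the Lotka-Volterra equations \eqref{LVeq},
\[
\dot V = A(x-x^*)(a+bx+cy) + B(y-y^*)(d+ex+fy).
\]
Because $(x^*,y^*)$ is a fixed point, $a = -bx^* - cy^*$ and $d=-ex^* - fy^*$, so substituting and writing $X = x-x^*$, $Y=y-y^*$ produces the clean expression
\[
\dot V = Ab X^2 + (Ac+Be)XY + BfY^2 \equiv Q(X,Y).
\]

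The main step is to choose $A, B > 0$ so that $Q$ is negative definite. Since $Ab<0$ and $Bf<0$, it suffices to make the discriminant negative, i.e.\ $(Ac+Be)^2 < 4AB\,bf$. Setting $\alpha=A/B$ this becomes
\[
c^2\alpha^2 + 2(ce-2bf)\,\alpha + e^2 < 0.
\]
Viewed as a quadratic in $\alpha$, its discriminant equals $4(ce-2bf)^2 - 4c^2e^2 = 16\,bf(bf-ce)$, which is strictly positive since $bf>0$ (as $b,f<0$) and $bf-ce>0$ by hypothesis. The sum of the two roots is $2(2bf-ce)/c^2$ and their product is $e^2/c^2$; both are positive because $2bf > ce$ (using $bf>0$ and $bf>ce$) and $e^2\ge 0$. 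Hence both roots are positive, so one can pick $\alpha$ strictly between them, giving $A,B>0$ with $Q$ negative definite. (The degenerate cases $c=0$ or $e=0$ are easier: e.g.\ if $c=0$ then one only needs $\alpha > e^2/(4bf)$.)

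With such $A, B$ fixed, $\dot V = Q(X,Y) < 0$ for all $(x,y)$ in the open positive quadrant with $(x,y)\neq(x^*,y^*)$. Since each function $x\mapsto x - x^*\log x$ is strictly convex on $(0,\infty)$ with unique minimum at $x^*$, the function $V$ is strictly convex with unique global minimum at $(x^*,y^*)$ and $V(x,y)\to\infty$ as $(x,y)$ approaches the boundary of the positive quadrant or infinity. A standard Lyapunov argument then gives that every trajectory starting in the open positive quadrant converges to $(x^*,y^*)$, so $(x^*,y^*)$ is an attracting fixed point.

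The only real obstacle is verifying that the admissible interval of ratios $\alpha$ is nonempty and positive; this reduces, as above, to the sign computation for the discriminant $16\,bf(bf-ce)$ and for the sum and product of the roots, each of which is controlled by the stated hypotheses $b,f<0$ and $bf-ce>0$. Everything else is bookkeeping.
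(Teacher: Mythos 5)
Your proposal is correct and follows essentially the same route as the paper: compute $\dot V$, use $a=-bx^*-cy^*$ and $d=-ex^*-fy^*$ to reduce it to the quadratic form $AbX^2+(Ac+Be)XY+BfY^2$, and choose the ratio $A/B$ to make this negative definite. The only difference is organizational: where the paper splits into three cases according to the sign of $ce$ (choosing $Ac+Be=0$ when $ce<0$, $Ac-Be=0$ when $ce>0$, and $A/B$ small when $ce=0$), you treat all cases at once by solving the quadratic inequality $c^2\alpha^2+2(ce-2bf)\alpha+e^2<0$ in $\alpha=A/B$, whose discriminant $16bf(bf-ce)$ and root signs are controlled by the hypotheses — a slightly more uniform verification of the same fact.
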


\begin{proof} A little calculus gives
\begin{align}
\frac{dV}{dt} & = A(x-x^*) (a+bx+cy) + B(y-y^*) (d+ex+fy) 
\nonumber\\
& =  Ab(x-x^*)^2 + (Ac+ Be)(x-x^*)(y-y^*) +  Bf(y-y^*)^2
\label{VRHS}
\end{align}
since $a=-bx^*-cy^*$ and $d = -ex^* - f y^*$. 

If $c$ and $e$ have different signs then we can choose $A/B = -e/c$, so that $Ac+Be=0$ and
$$
\frac{dV}{dt} =  Ab(x-x^*)^2 +  Bf(y-y^*)^2
$$
$Ab$ and $Bf$ are negative so $V$ is a Lyapunov function.

To deal with the case $ce>0$, we write \eqref{VRHS} as
$$
\frac{1}{2}(X,Y)Q \begin{pmatrix} X \\ Y \end{pmatrix}  
\quad\hbox{where}\quad Q = \begin{pmatrix} 2Ab & Ac + Be \\ Ac + Be & 2Bf \end{pmatrix}
$$
We would like to arrange things so that the symmetric matrix $Q$ is negative definite, i.e., both
eigenvalues are negative. The trace of $Q$, which is the sum of the eigenvalues is $Ab+Bf < 0$. The determinant,
which is the product of the eigenvalues,  is
$$
4ABbf - (A^2c^2 + 2ABce + B^2e^2) = 4AB(bf-ce) - (Ac - Be)^2
$$
To conclude both eigenvalues are negative, we want to show that the determinant is positive.
We have assumed $bf-ce>0$. To deal with the second term we choose $A,B$ so that $Ac-Be=0$.

Finally, we have to consider the situation $ce=0$. We can suppose without loss of generality that $e=0$. In this case the determinant is
$A[4Bbf -Ac^2]$ with $bf>0$ so if $A/B$ is small this is positive.
\end{proof} 

It follows from Theorem \ref{reptoLV} that 
$$
\bar V(u_1,u_2,u_3) = A\left( \frac{u_1}{u_3} - x^* \log \frac{u_1}{u_3}\right) 
+ B\left( \frac{u_2}{u_3} - y^* \log \frac{u_2}{u_3} \right)
$$
is a Lyapunov function for the replicator equation. Unfortunately for our purposes, it is not convex near the boundary.

\subsection{Classifying three strategy games}

To investigate the asymptotic behavior of the replicator equation
in three strategy games, we will use a technique we learned from mathematical ecologists \cite{memoir}. We 
begin by investigating the three two-strategy sub-games.
When the game is written with 0's on the diagonal it is easy to determine the behavior
of the 1 vs.~2 subgame. The mixed strategy equilibrium, if it exists, is given by
\beq
\left( \frac{\beta_3}{\alpha_3+\beta_3}, \frac{\alpha_3}{\alpha_3+\beta_3} \right) 
\label{Gmse}
\eeq
In equilibrium both types have fitness $\alpha_3\beta_3/(\alpha_3+\beta_3)$. There are four cases:

\begin{itemize}
  \item 
$\alpha_3, \beta_3 > 0$, attracting (stable) mixed strategy equilibrium.
  \item
$\alpha_3 > 0$, $\beta_3 < 0$, strategy 1 dominates strategy 2, or $1 \gg 2$.
  \item
$\alpha_3 < 0$, $\beta_3 > 0$, strategy 2 dominates strategy 1, or $2 \gg 1$.
  \item
$\alpha_3, \beta_3 < 0$, repelling (unstable) mixed strategy equilibrium unstable.
\end{itemize}

\mn
Note that here the word stable refers only to the behavior of the replicator equation on the edge.

Our main technique for proving coexistence in the spatial game is to show the existence of a repelling  
function for the replicator equation associated with the (modified) game. 
(See Section \ref{ssec:repel} for the definition.) 
A repelling function will not exist if on one of the edges there is an unstable mixed strategy equilibrium,
so we will ignore games that have them. 
As the reader will soon see, if an edge, say $u_3=0$, has a stable mixed strategy equilibrium, 
we will also need to determine if the third strategy can invade: i.e., 
if $u_1,u_2$ is close to the boundary equilibrium then $u_3$ will increase.

Bomze \cite{Bomze83} drew 46 phase portraits to illustrate the possible behaviors of the replicator equation.
A follow up paper twelve years later,  \cite{Bomze95}, corrected the examples associated with five of the cases
and add two more pictures. Bomze considered situations in which an entire edge consists of fixed points
or there was a line of fixed points connecting two edge equilibria. Here, we will restrict our attention
to ``generic'' cases that do not have these properties. We approach the enumeration of possibilities by considering the number of 
stable edge fixed points, which can be 3, 2, 1, or 0, and then the number of these fixed points that can be invaded.

\subsubsection{Three edge fixed points}

Here, and in the next three subsections, we begin with the example with an attracting
interior equilibrium. In this and the next two subsections this is the case in which
all edge fixed points can be invaded.

\mn
{\bf Example 7.1. Three stable invadable edge fixed points.} (Bomze \#7)
If we assume that all $\alpha_i, \beta_i > 0$ (for notation see \eqref{gen33}), then there are three stable boundary equilibria. 
We assume that in each case, the third strategy can invade the equilibrium.

\begin{center}
\begin{picture}(180,130)
\put(87,125){1}\put(20,25){3}\put(155,25){2}
\put(59,76){$\bullet$}\put(87,27){$\bullet$}\put(116,76){$\bullet$}\put(87,55){$\bullet$} 
\put(65,76){\vector(3,-2){20}} \put(90,32){\vector(0,1){20}} \put(115,76){\vector(-3,-2){20}} 
\put(30,30){\line(2,3){60}}\put(35,48){\vector(2,3){15}}\put(80,115){\vector(-2,-3){15}} 
\put(150,30){\line(-2,3){60}} \put(143,50){\vector(-2,3){15}}\put(103,110){\vector(2,-3){15}} 
\put(30,30){\line(1,0){120}}\put(45,25){\vector(1,0){30}}\put(135,25){\vector(-1,0){30}} 
\end{picture}
\end{center}

The 2 vs.~3 subgame has a mixed strategy equilibrium at 
$$
(p_{23},q_{23}) = \left(\frac{\alpha_1}{\alpha_1+\beta_1},\frac{\beta_1}{\alpha_1+\beta_1}\right).
$$
Each strategy has fitness $\alpha_1\beta_1/(\alpha_1+\beta_1)$ at this point, so 1's can
invade this equilibrium if
\beq
\alpha_3 \cdot \frac{\alpha_1}{\alpha_1+\beta_1} + \beta_2 \cdot \frac{\beta_1}{\alpha_1+\beta_1}
> \frac{\alpha_1\beta_1}{\alpha_1+\beta_1}
\label{inv1}
\eeq
which we can rewrite as 
\beq
\alpha_3\alpha_1 + \beta_2\beta_1 - \alpha_1\beta_1 > 0
\label{num1}
\eeq
The invadability condition (\ref{num1}) implies that the numerator of $\rho_1$ in \eqref{Geq} is positive.

The 1 vs.~3 subgame has a mixed strategy equilibrium at 
$$
(p_{13},q_{13}) = \left( \frac{\beta_2}{\alpha_2+\beta_2},\frac{\alpha_2}{\alpha_2+\beta_2}\right).
$$
Each strategy has fitness $\alpha_2\beta_2/(\alpha_2+\beta_2)$ at this point, so 2's can
invade this equilibrium if
\beq
\beta_3 \cdot \frac{\beta_2}{\alpha_2+\beta_2} + \alpha_1 \cdot \frac{\alpha_2}{\alpha_2+\beta_2}
> \frac{\alpha_2\beta_2}{\alpha_2+\beta_2}
\label{inv2}
\eeq
which we can rewrite as 
\beq
\beta_2\beta_3 + \alpha_1\alpha_2  - \alpha_2\beta_2 > 0
\label{num2}
\eeq
The invadability condition (\ref{num2}) implies that the numerator of $\rho_2$ given in \eqref{Geq} is positive.

The 1 vs.~2 subgame has a mixed strategy equilibrium at 
$$
(p_{12},q_{12}) = \left(\frac{\alpha_3}{\alpha_3+\beta_3},\frac{\beta_3}{\alpha_3+\beta_3}\right).
$$
Each strategy has fitness $\alpha_3\beta_3/(\alpha_3+\beta_3)$ at this point, so 3's can
invade this equilibrium if
\beq
\alpha_2 \cdot \frac{\alpha_3}{\alpha_3+\beta_3} + \beta_1 \cdot \frac{\beta_3}{\alpha_3+\beta_3}
> \frac{\alpha_3\beta_3}{\alpha_3+\beta_3}
\label{inv3}
\eeq
which we can rewrite as 
\beq
\alpha_2\alpha_3 + \beta_1\beta_3 - \alpha_3\beta_3 > 0
\label{num3}
\eeq
The invadability condition (\ref{num3}) implies that the numerator of $\rho_3$ in \eqref{Geq} is positive.

Combining the last three results we see that there is an interior equilibrium.
To apply Theorem \ref{LVattr}, note that this game transforms into the Lotka-Volterra equation:
\begin{align}
dx/dt & = x[ \beta_2 -\alpha_2 x + (\alpha_3-\beta_1) y] 
\nonumber\\
dy/dt & = y[\alpha_1 + (\beta_3-\alpha_2)x  -\beta_1 y]
\label{GtoLV}
\end{align}
with an interior equilibrium $x^*,y^*>0$.
Clearly, $b, f < 0$. To check $bf-ec>0$ we note that by \eqref{num3}
\begin{align*}
bf-ce & = \alpha_2\beta_1 - (\alpha_3-\beta_1)(\beta_3-\alpha_2) \\
& = \alpha_2\alpha_3 + \beta_1\beta_3 - \alpha_3\beta_3 > 0
\end{align*}
which is the condition for $\rho_3>0$. 

For future reference note that if $bf-ce>0$ and there is interior fixed point then
\eqref{LVfp} implies
$$
dc-fa > 0 \quad\hbox{and}\quad ea-bd >0
$$
Plugging in the coefficients, these conditions become
\begin{align*}
0 < \alpha_1(\alpha_3-\beta_1) + \beta_1\beta_2 
& = \beta_1\beta_2 + \alpha_1\alpha_3 - \alpha_1\beta_1 \\
0 < \beta_2(\beta_3-\alpha_2) + \alpha_1 \alpha_2 
& = \beta_2\beta_3 + \alpha_2\alpha_1 - \alpha_2\beta_3
\end{align*}
i.e., the numerators of $\rho_1$ and $\rho_2$ are positive.

\mn
{\bf Example 7.1A.} If only TWO of the edge fixed points are invadable, then the numerator of one of the $\rho_i$ will be negative while the other two are positive, so there is no interior equilibrium. Bomze \#35 shows that the system will converge to the noninvadable fixed point. We can prove this
by transforming to a Lotka-Volterra equation. Since that argument will also cover Example 7.2A, we will wait until then to give the details.

\begin{center}
\begin{picture}(180,130)
\put(87,125){1}\put(20,25){3}\put(155,25){2}
\put(59,76){$\bullet$}\put(87,27){$\bullet$}\put(116,76){$\bullet$}
\put(78,68){\vector(-3,2){10}} \put(90,35){\vector(0,1){10}} \put(115,76){\vector(-3,-2){10}} 
\put(30,30){\line(2,3){60}}\put(35,48){\vector(2,3){15}}\put(80,115){\vector(-2,-3){15}} 
\put(150,30){\line(-2,3){60}} \put(143,50){\vector(-2,3){15}}\put(103,110){\vector(2,-3){15}} 
\put(30,30){\line(1,0){120}}\put(45,25){\vector(1,0){30}}\put(135,25){\vector(-1,0){30}} 
\end{picture}
\end{center}

\begin{lemma}
It is impossible to have a game with three stable edge fixed points and have ONE or ZERO of them are invadable.
\end{lemma}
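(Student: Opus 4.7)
The plan is to reduce the claim to a simple two-variable AM-GM obstruction. The hypothesis ``three stable edge fixed points'' means $\alpha_i,\beta_i>0$ for $i=1,2,3$, and the lemma says we cannot have two (or three) of the three non-invadability conditions hold simultaneously, i.e., at least two of the numerators \eqref{num1}, \eqref{num2}, \eqref{num3} must be positive.

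First I would rewrite each non-invadability condition by dividing through by the positive quantity $\alpha_i\beta_i$ associated with that edge: negating \eqref{inv1}--\eqref{inv3} gives
\begin{align*}
(\text{N}_1)\ & \ \frac{\alpha_3}{\beta_1}+\frac{\beta_2}{\alpha_1}\le 1, \\
(\text{N}_2)\ & \ \frac{\beta_3}{\alpha_2}+\frac{\alpha_1}{\beta_2}\le 1, \\
(\text{N}_3)\ & \ \frac{\alpha_2}{\beta_3}+\frac{\beta_1}{\alpha_3}\le 1.
\end{align*}
The key observation is that in any pair of these conditions, one term of the first inequality is the reciprocal of one term of the second. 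For example, pairing $(\text{N}_1)$ and $(\text{N}_2)$ isolates the reciprocal pair $\beta_2/\alpha_1$ and $\alpha_1/\beta_2$; pairing $(\text{N}_1)$ and $(\text{N}_3)$ isolates $\alpha_3/\beta_1$ and $\beta_1/\alpha_3$; pairing $(\text{N}_2)$ and $(\text{N}_3)$ isolates $\beta_3/\alpha_2$ and $\alpha_2/\beta_3$.

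Next I would add the two inequalities in any such pair and apply AM-GM to the reciprocal terms. Concretely, assuming $(\text{N}_1)$ and $(\text{N}_2)$ both hold gives
\[
\frac{\alpha_3}{\beta_1}+\frac{\beta_3}{\alpha_2}+\left(\frac{\beta_2}{\alpha_1}+\frac{\alpha_1}{\beta_2}\right)\le 2,
\]
while AM-GM forces the parenthesized sum to be $\ge 2$. Hence $\alpha_3/\beta_1+\beta_3/\alpha_2\le 0$, contradicting the strict positivity of all $\alpha_i,\beta_i$. Cyclically the same argument rules out the pairs $(\text{N}_1,\text{N}_3)$ and $(\text{N}_2,\text{N}_3)$.

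Therefore at most one of $(\text{N}_1),(\text{N}_2),(\text{N}_3)$ can hold, so at least two of the edge equilibria are invadable, proving the lemma. There is no real obstacle here: the argument is a direct consequence of the cyclic pattern of $\alpha$'s and $\beta$'s built into \eqref{gen33} together with a one-line AM-GM. The only care needed is to verify that a reciprocal pair appears in each of the three possible pairings, which is immediate from the formulas above.
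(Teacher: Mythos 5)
Your proof is correct, and it rests on the same obstruction as the paper's: two of the three non-invadability conditions force a quantity and its reciprocal to both be too small. The paper extracts this by using the projective transformation to normalize $\alpha_i=1$ and noting the second normalized inequality forces $\beta_2>1$, which kills the first (handling the other pairings by cyclic symmetry); your division by $\alpha_i\beta_i$ followed by AM-GM on the reciprocal pair is an equivalent packaging that treats all three pairings explicitly and avoids invoking the projective transformation.
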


\begin{proof}
Suppose that 1 cannot invade the $2,3$ equilibrium and that 2 cannot invade the $1,3$ equilibrium. 
By making a projective transformation, see Theorem \ref{projt}, we can suppose the $\alpha_i=1$. The failure of the two invadabilities implies that the numerators of $\rho_1$ and $\rho_2$ are negative: 
\begin{align*}
\beta_1\beta_2 + 1 &< \beta_1 \\
\beta_2\beta_3 + 1 &< \beta_2
\end{align*}
Since the $\beta_i>0$ the second equation implies $\beta_2>1$ and hence the first inequality is impossible.
\end{proof}

\subsubsection{Two edge fixed points}

\mn
{\bf Example 7.2. Two stable invadable edge fixed points.} (Bomze \#9)
If we suppose $\alpha_2, \beta_2 > 0$, $\alpha_1, \beta_1 > 0$ and $1 \gg 2$ ($\alpha_3>0>\beta_3$)
then there are two stable edge equilibria. 
In words all entries are positive except for $\beta_3$. Again, we assume that the two edge equilibria can be invaded.

\begin{center}
\begin{picture}(180,130)
\put(87,125){1}\put(20,25){3}\put(155,25){2}
\put(59,76){$\bullet$}\put(87,27){$\bullet$}
\put(87,55){$\bullet$} \put(65,76){\vector(3,-2){20}} \put(90,32){\vector(0,1){20}} 
\put(30,30){\line(2,3){60}}\put(35,48){\vector(2,3){15}}\put(80,115){\vector(-2,-3){15}} 
\put(150,30){\line(-2,3){60}}\put(135,60){\vector(-2,3){20}} 
\put(30,30){\line(1,0){120}}\put(45,25){\vector(1,0){30}}\put(135,25){\vector(-1,0){30}} 
\end{picture}
\end{center}

By the reasoning in Example 7.1, if the 2's can invade the $1,3$ equilibrium then
the numerator of $\rho_2$ given in \eqref{Geq} is positive., and if 1's can invade the $2,3$ equilibrium
then the numerator of $\rho_1$ in \eqref{Geq} is positive.
In the numerator of $\rho_3$, $\alpha_3\alpha_2>0$ and $-\alpha_3\beta_3>0$, but $\beta_3\beta_1<0$.
It does not seem to be possible to prove algebraically that 
$$
\beta_3\beta_1 + \alpha_3\alpha_2 - \alpha_3\beta_3 > 0
$$
In Section \ref{ssec:Lexist}, we will show that in this class of examples there is a repelling function. This will imply that trajectories cannot reach the boundary, so the existence of a fixed point follows from Theorem \ref{reptoLV} and the next result which is Theorem 5.2.1 in \cite{HS98}.
Here, an $\omega$-limit is a limit of $u(t_n)$ with $t_n\to\infty$,

\begin{theorem} \label{restpt}
If a Lotka-Volterra equation has an $\omega$-limit in $\Gamma = \{ (u_1, \ldots u_n) : u_i > 0, u_1 + \cdots + u_n = 1 \}$ 
then it has a fixed point in that set.
\end{theorem}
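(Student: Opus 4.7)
Via Theorem \ref{reptoLV}, the statement translates into the following claim for the replicator equation: if the $\omega$-limit set $\omega$ of some orbit is contained in $\Gamma$, then the replicator equation has a rest point in $\Gamma$. The coordinate change $v_i = u_i/u_n$ is a homeomorphism of the open positive orthant of the LV system onto $\Gamma$ that changes time by the positive factor $u_n$ appearing in the proof of Theorem \ref{reptoLV}, and rest points correspond to rest points, so it suffices to prove the replicator version.

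Fix any $q \in \omega$. Since $\omega$-limit sets are compact and flow-invariant, the orbit $q(t)$ through $q$ remains in $\omega \subset \Gamma$ for all $t \in \RR$; as $\omega$ is a compact subset of the open set $\Gamma$, there is $\delta > 0$ with $q_i(t) \ge \delta$ for every $i$ and every $t$.

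The engine is the identity $(d/dt)\log q_i(t) = (Gq(t))_i - q(t)^T G q(t)$ along the replicator flow. Integrating from $0$ to $T$ and dividing by $T$,
\beq
\frac{\log q_i(T) - \log q_i(0)}{T} = (G\bar q(T))_i - \overline{q^T G q}(T),
\eeq
where $\bar q(T) = T^{-1}\int_0^T q(s)\,ds$ and $\overline{q^T G q}(T)$ denotes the time average of $q(s)^T G q(s)$ on $[0,T]$. Because $q_i(t) \ge \delta$, the left-hand side is $O(1/T) \to 0$. The averages $\bar q(T)$ lie in the compact set $\{u : u_i \ge \delta,\, \sum_i u_i = 1\} \subset \Gamma$, and $\overline{q^T G q}(T)$ is bounded, so along a subsequence $T_k \to \infty$ we may take $\bar q(T_k) \to \bar q^*$ and $\overline{q^T G q}(T_k) \to c$ for some $\bar q^* \in \Gamma$ and $c \in \RR$. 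Passing to the limit yields $(G\bar q^*)_i = c$ for every $i$; then $\bar q^{*T} G \bar q^* = \sum_i \bar q^*_i \cdot c = c$, so $\bar q^*$ is an interior rest point of the replicator equation, and pulling it back through Theorem \ref{reptoLV} produces the required LV rest point.

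The main obstacle is the step before the averaging argument: we cannot apply it to the original orbit (which a priori may come close to the boundary even though its accumulation points do not) but only to an orbit through an $\omega$-limit point. Invariance of $\omega$-limit sets is exactly what lets us transfer to such an orbit while keeping $q_i(t)$ bounded below by a positive constant. Once that bound is in hand, the logarithmic averaging and the extraction of a convergent subsequence are routine.
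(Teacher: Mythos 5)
The paper does not actually prove this statement: it is quoted verbatim as Theorem 5.2.1 of Hofbauer and Sigmund \cite{HS98}, whose proof runs by contradiction --- if the Lotka--Volterra system $dv_i/dt=v_i(r_i+(Bv)_i)$ has no interior rest point, then $0$ is not in the convex set $\{r+Bv: v\in \mathrm{int}\,\RR^{n-1}_+\}$, a separating vector $c$ makes $\sum_i c_i\log v_i$ strictly increasing at a definite rate on compact interior sets, and this is incompatible with the orbit returning infinitely often to a neighborhood of a single interior $\omega$-limit point. Your time-averaging (Ces\`aro mean) argument is a genuinely different mechanism, and the computation itself is correct: along an orbit with $q_i(t)\ge\delta>0$ for all $t$, the averages $\bar q(T)$ accumulate only at interior rest points.

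The gap is in your first sentence: you silently strengthen the hypothesis from ``some $\omega$-limit point lies in $\Gamma$'' --- which is what the theorem asserts, since the paper defines an $\omega$-limit to be a single limit $\lim u(t_n)$ --- to ``the whole $\omega$-limit set $\omega$ is contained in $\Gamma$.'' Everything downstream depends on the stronger version: the crucial bound $q_i(t)\ge\delta$ for the orbit through $q$ uses that $\omega$, not merely the point $q$, is a compact subset of the open set $\Gamma$. If $\omega$ also meets $\partial\Gamma$, the (invariant) orbit through $q$ may itself approach the boundary, $T^{-1}\log q_i(T)$ need not tend to $0$, and the limit of the averages need not be interior; running the same identity on the original orbit along the times $t_k$ with $u(t_k)\to q$ only yields a point of the \emph{closed} simplex at which all payoffs are equal, and such a point can sit on the boundary without forcing an interior fixed point. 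So as written you have proved a weaker theorem. It happens to be the version the paper actually needs (in Examples 7.2 and 7.3 a repelling function confines the trajectory to a compact subset of $\Gamma$, so there $\omega\subset\Gamma$ holds automatically), but it is not Theorem \ref{restpt} as stated; closing the gap requires something like the separation/Lyapunov argument of \cite{HS98}.
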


\noindent
Once we know that the fixed point exists it is easy to conclude that it is attracting. As in Example 7.1
in the associated Lotka-Volterra equation $b, f < 0$ and the condition $bf-ec>0$ follows from the fact that $\rho_3>0$.

\mn
{\bf Example 7.2A.} 
If only ONE edge fixed point is invadable, then one numerator is positive and two are negative, so there is no interior equilibrium. Bomze \#37 and \#38 (which differ only in the dominance relation between 1 and 2) show that in this case the replicator equation approaches the noninvadable edge fixed point.

\begin{center}
\begin{picture}(180,130)
\put(87,125){1}\put(20,25){3}\put(155,25){2}
\put(59,76){$\bullet$}\put(87,27){$\bullet$} 
\put(78,68){\vector(-3,2){10}} \put(90,35){\vector(0,1){10}} 
\put(30,30){\line(2,3){60}}\put(35,48){\vector(2,3){15}}\put(80,115){\vector(-2,-3){15}} 
\put(150,30){\line(-2,3){60}}\put(135,60){\vector(-2,3){20}} 
\put(30,30){\line(1,0){120}}\put(45,25){\vector(1,0){30}}\put(135,25){\vector(-1,0){30}} 
\end{picture}
\end{center}

\begin{proof}
To prove the claim about the limit behavior of this system and of Example 7.1A, we transform to a
Lotka-Volterra system.

\begin{center}
\begin{picture}(180,160)
\put(30,30){\line(1,0){120}} \put(40,25){\vector(1,0){15}} \put(100,25){\vector(-1,0){15}}
\put(30,30){\line(0,1){110}} \put(25,50){\vector(0,1){20}} \put(25,130){\vector(0,-1){15}}
\put(67,27){$\bullet$}\put(27,97){$\bullet$} 
\put(70,30){\line(-4,3){40}}
\put(30,100){\line(3,-2){105}}
\put(70,35){\vector(0,1){15}}
\put(50,100){\vector(-1,0){15}}
\end{picture}
\end{center}

\noindent
In the picture the dots are the boundary equilibria. The sloping lines are the null clines $dx/dt=0$, and $dy/dt=0$. The absence of an interior fixed point implies that they do not cross. The invadability conditions and the behavior near $(0,0)$ imply that $dx/dt=0$ lies below $dy/dt=0$. Lemma 5.2 in \cite{memoir} constructs a convex Lyapunov function which proves convergence to the fixed point on the $y$ axis. Unfortunately, when one pulls this function back to be a Lyapunov function for the replicator equation, it is no longer convex. \end{proof}

\mn
{\bf Example 7.2B.}
If NEITHER fixed point is invadable, see Bomze \#10, there is an interior saddle point separating the domains of attraction
of the two edge equilibria. As we mentioned in the overview, by analogy with results of Durrett and Levin \cite{DL94},
we expect that the winner in the spatial game is dictated by the direction of movement of the traveling wave connecting the
two boundary fixed points, but we do not even know how to prove that the traveling wave exists.

\begin{center}
\begin{picture}(180,130)
\put(87,125){1}\put(20,25){3}\put(155,25){2}
\put(59,76){$\bullet$}\put(87,27){$\bullet$}
\put(87,55){$\bullet$} \put(85,63){\vector(-3,2){20}} \put(90,52){\vector(0,-1){18}} 
\put(30,30){\line(2,3){60}}\put(35,48){\vector(2,3){15}}\put(80,115){\vector(-2,-3){15}} 
\put(150,30){\line(-2,3){60}}\put(135,60){\vector(-2,3){20}} 
\put(30,30){\line(1,0){120}}\put(45,25){\vector(1,0){30}}\put(135,25){\vector(-1,0){30}} 
\end{picture}
\end{center}

\subsubsection{One edge fixed point}

There are several possibilities based on the orientation of the edges on the other two sides.
We begin with the one that leads to coexistence.

\mn
{\bf Example 7.3. One stable invadable edge fixed point.} (Bomze \#15). 
Suppose  $3 \gg 2$ ($\beta_1 > 0 > \alpha_1$), $2 \gg 1$ ($\beta_3 > 0 >\alpha_3$), $\alpha_2, \beta_2 > 0$,
and $2$'s can invade the $1,3$ equilibrium.

\begin{center}
\begin{picture}(180,150)
\put(30,30){\line(1,0){120}}\put(30,30){\line(2,3){60}}\put(150,30){\line(-2,3){60}}
\put(90,125){$1$}\put(20,25){$3$}\put(155,25){$2$} 
\put(59,76){$\bullet$}
\put(35,48){\vector(2,3){15}}\put(80,115){\vector(-2,-3){15}} 
\put(87,55){$\bullet$} \put(65,76){\vector(3,-2){20}} 
\put(115,95){\vector(2,-3){20}} 
\put(105,25){\vector(-1,0){30}} 
\end{picture}
\end{center}

In words, all entries are positive except for $\alpha_1$ and $\alpha_3$.
As in the previous two examples, the fact that 2's can invade the 1,3 equilibrium implies that 
the numerator of $\rho_2$ in \eqref{Geq} is positive.
The numerator of $\rho_1$ is positive since
$$
\beta_1\beta_2 > 0, \quad \alpha_1\alpha_3 >0, \quad -\alpha_1\beta_1 >0
$$
In the numerator of $\rho_3$ 
$$
\beta_3\beta_1 > 0, \quad \alpha_3\alpha_2 <0, \quad -\alpha_3\beta_3 >0
$$
so again we have to resort to the existence of a repelling function proved 
in Section \ref{ssec:Lexist} and Theorem \ref{restpt} to prove that this is positive. As in the previous two cases, the game transforms into the Lotka-Volterra equation given in \eqref{GtoLV}.
It has $b,f < 0$, and the positivity of $bf-ce$ follows from that of
the numerator of $\rho_3$, so the fixed point is globally attracting.

\mn
{\bf Example 7.3A.}
Suppose now we reverse the direction of the $2,3$ edge. 

\begin{center}
\begin{picture}(180,150)
\put(30,30){\line(1,0){120}}\put(30,30){\line(2,3){60}}\put(150,30){\line(-2,3){60}}
\put(90,125){$1$}\put(20,25){$3$}\put(155,25){$2$} 
\put(57,73){$\bullet$}
\put(35,48){\vector(2,3){15}}\put(80,115){\vector(-2,-3){15}} 
\put(67,71){\vector(2,-1){50}} 
\put(115,95){\vector(2,-3){20}} 
\put(75,25){\vector(1,0){30}} 
\end{picture}
\end{center}

\noindent
If 2 can invade the $1,3$ equilibrium then 2's take over the 
system (Bomze \#40). 

\begin{proof}
We will convert the system to Lotka-Volterra, so it is convenient to relabel things so that the edge with the stable equilibrium is $1,2$.
The behavior of the edges implies that the sign pattern of the matrix
$$
\begin{matrix} 
& 1 & 2 & 3 \\
1 & 0 & + & - \\
2 & + & 0 & - \\
3 & + & + & 0
\end{matrix}
$$
When we convert to a Lotka-Volterra system, we subtract the last row of the matrix from the other two, and the last column gives us the constants.
\begin{align*}
dx/dt & = x (a + bx + cy) \\
dy/dt & = y (d + ex + fy)
\end{align*} 
The sign pattern of the game matrix tells us that $a,d,b,f<0$, while $c$ and $e$ can take either sign. 
If $c<0$ then $x$ decreases to 0, and once it is small enough then y decreases as well. A similar argument applies if $e<0$,
and in either case the limit is $(0,0)$.

\begin{center}
\begin{picture}(180,160)
\put(30,30){\line(1,0){120}} 
\put(55,25){\vector(-1,0){15}} 
\put(120,25){\vector(-1,0){20}}
\put(67,27){$\bullet$}
\put(70,30){\line(3,2){80}}
\put(155,80){$dy/dt = 0$}
\put(145,40){\vector(-3,2){15}}
\put(30,30){\line(0,1){120}} 
\put(25,55){\vector(0,-1){15}} 
\put(25,120){\vector(0,-1){20}}
\put(27,63){$\bullet$} 
\put(30,65){\line(3,4){60}}
\put(95,145){$dx/dt=0$}
\put(40,145){\vector(2,-3){10}}
\put(120,120){\vector(-1,-1){15}}
\put(80,80){\vector(-1,-1){15}}
\end{picture}
\end{center}

Now suppose $c,e>0$. Since 3 can invade the 1,2 equilibrium, the numerator of $\rho_3>0$, so referring back to the
analysis of \eqref{GtoLV}, we see that $bf-ec>0$. Using Theorem \ref{LVattr} we see that there cannot be an
interior equilibrium or it would be globally attracting, contradicting the fact that (0,0) is at least locally attracting. 
Since there is no interior equilibrium, the null clines $dx/dt=0$ and $dy/dt=0$ cannot intersect. The arrows in the diagram
show the direction of movement in the three regions. From this we see that the solution will eventually enter the central region which it cannot
leave and so must converge to $(0,0)$.

Since there are no boundary equilibria, $dx/dt=0$ cannot intersect the $x$ axis and  $dy/dt=0$ cannot intersect the $y$ axis,
so the line $dx/dt=0$ must lie above $dy/dt=0$. A little thought reveals that
\begin{center}
\begin{tabular}{lcc}
in the region & $dx/dt$ & $dy/dt$ \\
$A=$ above $dx/dt=0$ & $>0$ & $<0$ \\
$B=$ below $dy/dt=0$ & $<0$ & $>0$ \\
$C=$ in between      & $<0$ & $<0$
\end{tabular}
\end{center}
Consulting the picture we see that if the trajectory starts in $A$ or $B$ then it will enter $C$, and cannot re-enter $A$ or $B$
so it must converge to $(0,0)$.
\end{proof}

\mn
{\bf Example 7.3B.}
At first glance it may seem that $2 \gg 1$ and $2 \gg 3$ imply that $2$ can invade
the $1,3$ equilibrium. However, $2 \gg 3$, $2 \gg 1$, and the existence of a stable fixed point on the $1,3$ edge
implies only that $\alpha_1, \alpha_3 < 0$ while the other six entries are positive. The example
$$
\begin{matrix}
& 1 & 2 & 3 \\
1 & 0 & -1 & 3 \\
2 & 1 & 0  & 1 \\
3 & 3 & -1 & 0
\end{matrix}
$$
shows it is possible that $2$ is not invadable (Bomze \#14). Since 2 is always locally attracting (look at the middle column of the matrix) there is an interior fixed point, which is a saddle point. The system is bistable. See the discussion of Example 7.2B for the conjectured behavior of the spatial game.

\begin{center}
\begin{picture}(180,150)
\put(30,30){\line(1,0){120}}\put(30,30){\line(2,3){60}}\put(150,30){\line(-2,3){60}}
\put(90,125){$1$}\put(20,25){$3$}\put(155,25){$2$} 
\put(57,73){$\bullet$}
\put(35,48){\vector(2,3){10}}\put(80,115){\vector(-2,-3){15}} 
\put(81,64){\vector(-2,1){17}}\put(85,59){$\bullet$}\put(93,58){\vector(2,-1){50}} 
\put(115,95){\vector(2,-3){20}} 
\put(75,25){\vector(1,0){30}} 
\end{picture}
\end{center}

\mn
{\bf Example 7.3C.}  
If both edges point toward the $1,3$ edge, then the sign pattern in the matrix is
$$
\begin{matrix}
& 1 & 2 & 3 \\
1 & 0 & + & + \\
2 & - & 0  & - \\
3 & + & + & 0
\end{matrix}
$$
Strategy 2 is dominated by each of the other two, so the 2's die out (Bomze \#42), and the replicator equation will converge to the $1,3$ equilibrium.  Let $(p_{13},0,q_{13})$ be the boundary equilibrium. The proof of Lemma \ref{afb} will show that if $\ep$ is small enough
$$
V(u) = u_2 + \ep[ u_1 - p_{13}\log u_1 + u_3 - q_{13} \log(u_3) ]
$$
is a convex Lyapunov function, so using Theorem 1.4 in \cite{CDP} we can show that any equilibrium has frequencies close to the boundary equilibrium. It should be possible to show that the 2's die out, but as in \cite{CDP} this is much harder than proving coexistence.

\begin{center}
\begin{picture}(180,150)
\put(30,30){\line(1,0){120}}\put(30,30){\line(2,3){60}}\put(150,30){\line(-2,3){60}}
\put(90,125){$1$}\put(20,25){$3$}\put(155,25){$2$} 
\put(57,73){$\bullet$}
\put(35,48){\vector(2,3){15}}\put(80,115){\vector(-2,-3){15}} 
\put(117,46){\vector(-2,1){50}} 
\put(135,65){\vector(-2,3){20}} 
\put(105,25){\vector(-1,0){30}} 
\end{picture}
\end{center}

\mn
{\bf Example 7.3D.}
Suppose now that $1,3$ cannot be invaded by $2$'s. We have already considered this possibility in Examples 7.3B and 7.3C so we can suppose that the arrows are in opposite directions. If we transform this example or the previous one to a Lotka-Volterra system by dropping the third strategy, then we can use Lemma 5.2 in \cite{memoir} to construct a convex Lyapunov function. As in the case of Example 7.3A, it is unfortunate that (i) this does not pull back to be a convex function for the replicator equation, and (ii) we cannot generalize the Lyapunov function from the previous example to cover. Thus we leave it to some reader to show that in the spatial model $2$'s will die out leaving an equilibrium consisting of 1's and 3's.

\begin{center}
\begin{picture}(180,180)
\put(30,30){\line(1,0){120}}\put(30,30){\line(2,3){60}}\put(150,30){\line(-2,3){60}}
\put(90,125){$1$}\put(20,25){$3$}\put(155,25){$2$} 
\put(59,76){$\bullet$}
\put(35,48){\vector(2,3){15}}\put(80,115){\vector(-2,-3){15}} 
\put(85,62){\vector(-3,2){20}} 
\put(115,95){\vector(2,-3){20}} 
\put(105,25){\vector(-1,0){30}} 
\end{picture}
\end{center}

\subsubsection{No edge fixed points}

There are $2^3=8$ possible orientations for the arrows on the three edges. Two lead to an interesting situation.

\mn
{\bf Example 7.4. Rock Paper Scissors.} 
That is, $1 \ll 2 \ll 3 \ll 1$, (or $1 \gg 2 \gg 3 \gg 1$).

\begin{center}
\begin{picture}(180,150)
\put(30,30){\line(1,0){120}}\put(30,30){\line(2,3){60}}\put(150,30){\line(-2,3){60}}
\put(90,125){$3$}\put(15,25){$1$}\put(155,25){$2$} 
\put(65,95){\vector(-2,-3){20}} 
\put(135,65){\vector(-2,3){20}} 
\put(75,25){\vector(1,0){30}} 
\end{picture}
\end{center}

\noindent
In the situation drawn we have $\beta_i> 0 > \alpha_i$ so the denominator of $\rho_i$
$$
\beta_i \beta_j + \alpha_i \alpha_k - \alpha_i \beta_i > 0
$$
and there is an interior fixed point. Theorem 7.7.2 in Hofbauer and Sigmund  \cite{HS98} describes the asymptotic behavior of the game.

\begin{theorem} \label{HSrps}
Let $\Delta = \beta_1\beta_2\beta_3 + \alpha_1\alpha_2\alpha_3$. 
If $\Delta>0$ solutions converge to the fixed point. If $\Delta < 0$ 
their distance from the boundary tends to 0. If $\Delta=0$ there is a one-parameter family of periodic orbits.
\end{theorem}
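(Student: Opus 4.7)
My plan is to reduce the replicator equation to the planar Lotka--Volterra system \eqref{GtoLV} via Theorem \ref{reptoLV} and then analyze that flow with the standard trace/Dulac/Poincar\'e--Bendixson triad. The RPS hypothesis $\alpha_i<0<\beta_i$ makes every numerator in \eqref{Geq} a sum of three strictly positive terms, so the interior equilibrium $\rho$ exists and corresponds to $(x^*,y^*)=(\rho_1/\rho_3,\rho_2/\rho_3)$ in LV coordinates. The Jacobian of \eqref{GtoLV} at $(x^*,y^*)$ has determinant $x^*y^*(bf-ce)=x^*y^*N_3>0$ (with $N_3$ the denominator of $\rho_3$ from \eqref{Geq}) and trace $-(\rho_1\alpha_2+\rho_2\beta_1)/\rho_3$. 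A short expansion of $\rho_1$ and $\rho_2$ via \eqref{Geq} telescopes the mixed $\alpha_i\beta_j$ cross-terms and yields the key identity $\rho_1\alpha_2+\rho_2\beta_1=\Delta/D$, so
$$\mathrm{tr}\,J \;=\; -\Delta/N_3.$$
Local classification is then immediate: the equilibrium is a locally attracting focus/node when $\Delta>0$, locally repelling when $\Delta<0$, and a linear centre when $\Delta=0$.

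To globalize I would apply Dulac's criterion with the ansatz $B(x,y)=x^{r}y^{s}$. Writing the LV field as $\mathbf{F}=(xF_1,yF_2)$ with $F_1=\beta_2-\alpha_2 x+(\alpha_3-\beta_1)y$ and $F_2=\alpha_1+(\beta_3-\alpha_2)x-\beta_1 y$, a direct calculation gives
$$\nabla\cdot(B\mathbf{F}) \;=\; x^{r}y^{s}\bigl[(r{+}1)\beta_2+(s{+}1)\alpha_1 \;+\;\mathcal{A}\,x\;+\;\mathcal{C}\,y\bigr],$$
where $\mathcal{A}$ and $\mathcal{C}$ are the coefficients linear in $(r,s)$. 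The pair $(r,s)$ is chosen to kill $\mathcal{A}$ and $\mathcal{C}$; this $2\times 2$ linear system has determinant $bf-ce=N_3\neq 0$ and hence a unique solution, and a brute expansion shows the residual constant $(r{+}1)\beta_2+(s{+}1)\alpha_1$ collapses to exactly $-\Delta/N_3$ via the same telescoping used for the trace. Hence on the positive quadrant $\nabla\cdot(B\mathbf{F})$ equals a strictly positive function times $-\Delta/N_3$.

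The global conclusions now follow. When $\Delta>0$ the divergence is strictly negative, Dulac's criterion forbids any closed orbit in $(0,\infty)^2$, and Poincar\'e--Bendixson together with the local attraction forces every interior trajectory to $(x^*,y^*)$. When $\Delta<0$ the divergence is strictly positive, closed orbits are again forbidden, and interior orbits must accumulate on the boundary heteroclinic cycle. When $\Delta=0$ the divergence vanishes identically on $(0,\infty)^2$, so $B\mathbf{F}$ is the Hamiltonian vector field of some $H$ with a nondegenerate local extremum at $(x^*,y^*)$; its interior level sets are compact closed curves, giving the one-parameter family of periodic orbits. The main obstacle is the algebraic cancellation in the Dulac step --- the miracle that the residual constant matches $-\Delta/N_3$ --- which must be verified by a careful expansion keeping track of the mixed signs $\alpha_i<0<\beta_i$; a secondary issue in the $\Delta=0$ case is justifying that the level sets of $H$ remain compact throughout the interior, which should follow from the standard boundedness of LV orbits in the RPS setting.
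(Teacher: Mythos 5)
Your route is genuinely different from the paper's. The paper first applies the projective transformation of Theorem \ref{projt} to reduce to the constant-sum case $\alpha_i+\beta_i=\gamma$, and then shows that $V(u)=\prod_i u_i^{\rho_i}$ is monotone along trajectories with $\tfrac{d}{dt}\log V$ proportional to $\gamma\sum_i(u_i-\rho_i)^2$, the sign of $\gamma$ agreeing with that of $\Delta$; all three cases then follow at once from a single global Lyapunov/first-integral argument on the compact simplex. Your local computations are correct: $D(\rho_1\alpha_2+\rho_2\beta_1)=\alpha_1\alpha_2\alpha_3+\beta_1\beta_2\beta_3$ does telescope as claimed, so $\mathrm{tr}\,J=-\Delta/N_3$ with $N_3>0$; and the ``miracle'' in the Dulac step is in fact automatic --- once $(r,s)$ are chosen to kill the linear terms, $\nabla\cdot(B\mathbf{F})/B$ is a constant, and evaluating at the fixed point (where $F_1=F_2=0$) shows that constant equals $bx^*+fy^*=\mathrm{tr}\,J=-\Delta/N_3$, with no further expansion needed. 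So Dulac correctly excludes interior periodic orbits (and interior graphics) when $\Delta\neq 0$, and in the $\Delta=0$ case the Hamiltonian $H$ is, up to sign and the time change, $\log V$ pulled back to LV coordinates; compactness of its level sets follows from concavity of $\sum_i\rho_i\log u_i$ on the simplex.

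The gap is in the passage from ``no closed orbits'' to the global conclusions. The boundary of the simplex is itself a heteroclinic cycle (the three vertices are saddles joined by the edges), and in the LV coordinates $x=u_1/u_3$, $y=u_2/u_3$ part of that cycle sits at infinity, so Dulac applied on the open quadrant says nothing about it and LV orbits are not a priori bounded. Consequently, for $\Delta>0$, ``Poincar\'e--Bendixson together with local attraction'' does not force convergence to $(x^*,y^*)$: the $\omega$-limit set of an interior orbit could still be the boundary cycle, and you must rule this out separately (e.g., by the standard eigenvalue-ratio criterion, which says the boundary cycle is repelling iff $\prod_i|\alpha_i|/\beta_i<1$, i.e.\ iff $\Delta>0$ --- note this reproduces exactly the quantity you are trying to control). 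Symmetrically, for $\Delta<0$ you need the $\omega$-limit set to lie \emph{entirely} in the boundary, not merely to touch it, to get ``distance from the boundary tends to $0$''; excluding the repelling interior fixed point from a graphic and handling connectedness of the $\omega$-limit set takes an extra argument. The paper's Lyapunov function sidesteps all of this because $V$ is globally monotone on the compact simplex and vanishes exactly on the boundary, so LaSalle's principle delivers both directions immediately. Your proof is completable, but these boundary-cycle steps must be added.
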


\noindent 
Bomze \#17 is the case $\Delta>0$, while \#16 is the case $\Delta=0$. He does not give an example of $\Delta<0$
because in his classification he does not distinguish a flow from its reversal.  

\begin{proof}
By using the projective transformation in Lemma \ref{projt} 
we can make the game constant sum, i.e., $\beta_i+\alpha_i=\gamma$. 
In the constant sum case, if $\rho_i$ is the interior equilibrium, $V(u) = \prod_i u_i^{\rho_i}$ and $u(t)$ is a solution of the replicator equation then
\begin{align*}
\frac{d}{dt} V(u(t)) & = V \cdot (\rho^T Gu - u^T Gu) \\
& = - V \cdot (u-\rho)^T Gu  = - V \cdot (u-\rho)^T G(u - \rho)
\end{align*}
since $(G\rho)_i$ does not depend on $i$ and $\sum u_i - \rho_i = 0$. Let $\xi = u-\rho$.
Since $G_{ij}+G_{ji} = \gamma$ the above 
\begin{align*}
& = - \gamma V (\xi_1\xi_2 + \xi_2 \xi_3 + \xi_3\xi_1) \\
& = - \frac{\gamma}{2} V \left( (\xi_1+\xi_2+\xi_3)^2 - (\xi_1^2 + \xi_2^2 + \xi_3^2) \right) \\
& = - \frac{\gamma}{2} V \sum_i (u_i - \rho_i)^2
\end{align*}
In the constant sum case, the sign of $\gamma$ is the same as the sign of $\Delta$. The gradient
$$
\nabla V = V \cdot \left( \frac{\rho_1}{u_1}, \frac{\rho_2}{u_2}, \frac{\rho_3}{u_3} \right)
$$
$V$ has a local maximum on $\{ (u_1,u_2,u_3) : u_1+u_2+u_3 = 1 \}$, where $\nabla V$ is perpendicular to the constraint set, so on $\{ (u_1,u_2,u_3) : u_i \ge 0, u_1+u_2+u_3 = 1 \}$ it is maximized 
when $u_i = \rho_i$. \end{proof}

\mn
{\bf Example 7.4A.} 
The other six orientations lead to one strategy dominating the other two (Bomze \#43) and taking over the system. 
Consider now, without loss of generality the case in which $1 \gg 2$, $1 \ll 3$, and $2 \ll 3$.

\begin{center}
\begin{picture}(180,150)
\put(30,30){\line(1,0){120}}\put(30,30){\line(2,3){60}}\put(150,30){\line(-2,3){60}}
\put(90,125){$3$}\put(15,25){$1$}\put(155,25){$2$} 
\put(65,95){\vector(-2,-3){20}} 
\put(135,65){\vector(-2,3){20}} 
\put(105,25){\vector(-1,0){30}} 
\end{picture}
\end{center}

In this case
the matrix has the form
$$
\begin{matrix}
& 2 & 3 & 1 \\
2 & 0 & -e & -f \\
3 & c & 0 & -d \\
1 & a & b & 0
\end{matrix}
$$
with $a, b, c, d, e ,f >0$.
We have written the matrix this way so we can use Theorem \ref{reptoLV} to transform into a Lotka-Volterra equation with
\begin{align*}
dx/dt & = x (-f -ax -(e+b) y) \\
dy/dt & = y (-d +(c-a)x - by) \\
\end{align*}
where $x=u_2/u_1$ and $y=u_3/u_1$. From the equation it is clear that $x(t) \downarrow 0$. If $a\ge c$, $y(t) \downarrow 0$.
If $a<c$ then when $x < d/(c-a)$, $y$ is decreasing and will converge to 0.

\section{Spatial three strategy games} \label{sec:3sgsp}

Recall that we are supposing the game is written in the form. 
\beq
G =
\begin{matrix} 
0 & \alpha_3 & \beta_2 \\ 
\beta_3 & 0 & \alpha_1 \\ 
\alpha_2 & \beta_1 & 0 
\end{matrix}
\label{Gdef2}
\eeq
Since the diagonal entries $G_{i,i}=0$, the reaction terms for both updates have the form
$$
p \left( \phi^i_R(u) + \theta \sum_j u_i u_j (G_{i,j} - G_{j,i}) \right)
$$
where $p=p_1$ and $\theta = p_2/p_1$ for Birth-Death updating and for Death-Birth updating $p=\bar p_1$ and
$$
\theta = \frac{\bar p_2 - p(v_1|v_2)/\kappa}{\bar p_1}
$$
Thus the perturbation matrix $A_{ij} = \theta (G_{i,j} - G_{j,i})$, and
the limiting reaction-diffusion equation of interest has the form 
\beq
\frac{\partial u_i}{\partial t} = \frac{\sigma^2}{2} \Delta u_i + \phi_H(u)
\label{RDE}
\eeq
where $\phi_H$ is the right-hand side of the replicator equation for the matrix
\beq
H=
\begin{pmatrix} 
0 & (1+\theta)\alpha_3 - \theta\beta_3 & (1+\theta)\beta_2 -\theta\alpha_2 \\
(1+\theta)\beta_3 - \theta \alpha_3 & 0 & (1+\theta)\alpha_1 - \theta\beta_1 \\
(1+\theta)\alpha_2 - \theta\beta_2 & (1+\theta)\beta_1 - \theta\alpha_1 & 0
\end{pmatrix}
\label{pertm}
\eeq
More compactly, the diagonal entries are 0. If $i \neq j$ then $H_{i,j} = (1+\theta)G_{i,j} - \theta G_{j,i}$.

Here we are interested in how the system behaves in general, so we will forget about the
transformation that turned $G$ into $H$ and assume that $H$ has entries given in \eqref{Gdef2}.
However, we will consistently use $H$ for the game matrix to remind ourselves that we
are working with the transformed game, and the results we prove are for $1 + \ep^2 G$
where $G$ is the original game.

\subsection{Repelling functions} \label{ssec:repel}

The key to our study of spatial games is a result stated in the introduction of \cite{memoir} as Proposition 1. Given is an ODE
$$
\frac{du_i}{dt} = f_i(u)
$$
A continuous function $\phi$ from $\Gamma = \{ (u_1, \ldots u_n) : u_i \ge 0, u_1 + \cdots + u_n  = 1\}$ to $[0,\infty]$
is said to be a repelling function if there are constants $0 \le M, C < \infty$ so that

\mn
(i) $\partial \Gamma = \{ u \in \Gamma : \phi(u) = \infty \}$,

\mn
(ii) for each $\delta >0$ there is a $c_\delta>0$ so that $d\phi(u(t))/dt \le -c_\delta$ when $M+\delta < \phi < \infty$,

\mn
(iii)  $\phi$ is convex,

\mn
(iv) $\phi(u) \le C \left( 1 + \sum_{i=1}^k \log^- u_i \right)$.

\mn
The complication of the $c_\delta$ is only needed if $M=0$, i.e., we are trying to prove convergence to the fixed point of the ODE.
If $M>0$ we just check that $d\phi(u(t))/dt \le -c_0$ when $M < \phi < \infty$. This will be the case in most of our examples.

\mn
In addition, we have to assume that there are constants $\gamma_i$ for the ODE so that

\mn
(v) $f_i(u) \ge - \gamma_i u_i$.

\mn
This always holds for replicator equations.

Consider now the PDE $du_i/dt = \Delta u_i + f_i(u)$. From Propositions 1 and 2 of \cite{memoir} we get

\begin{theorem} \label{hammer}
Suppose a repelling function exists, the initial condition $u(0,x)$ is continuous and has 
$u_i(0,x) \ge \eta_i > 0$ when $|x| \le \delta$. There are constants $\kappa>0$
and $t_0 < \infty$ which only depend on $\eta_i$, $\delta$, and $\eta$ so that 
$\phi(u(t,x)) \le M + \eta$ when $|x| \le \kappa t$, $t \ge t_0$.
\end{theorem}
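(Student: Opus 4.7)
The strategy is to treat $w(t,x) := \phi(u(t,x))$ as a subsolution of a parabolic inequality with strictly negative forcing on $\{w > M+\eta\}$ and then dominate $w$ from above by an explicit cone-shaped supersolution; this is the approach of Propositions~1 and~2 in \cite{memoir}. The first ingredient is the subsolution inequality. By the chain rule $\partial_t w = \nabla\phi(u)\cdot(\Delta u + f(u))$, while convexity of $\phi$ (property (iii)) gives
$$\Delta\phi(u) = \nabla\phi(u)\cdot \Delta u + \sum_{k=1}^d (\partial_{x_k}u)^\top\nabla^2\phi(u)(\partial_{x_k}u) \ge \nabla\phi(u)\cdot\Delta u.$$
Subtracting and invoking property (ii) yields
$$\partial_t w - \Delta w \le \nabla\phi(u)\cdot f(u) \le -c_\eta \quad\hbox{on}\quad \{w > M+\eta\},$$
interpreted in the viscosity sense where $\phi$ is not smooth.

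The second ingredient is a pointwise lower bound on each $u_i$ that makes $w$ finite for all positive times. Property (v) gives $\partial_t u_i \ge \Delta u_i - \gamma_i u_i$, so the parabolic maximum principle combined with the initial hypothesis $u_i(0,y) \ge \eta_i$ on $|y|\le\delta$ yields
$$u_i(t,x) \ge \eta_i\, e^{-\gamma_i t}\int_{|y|\le\delta} p_t(x-y)\, dy,$$
where $p_t$ is the standard heat kernel on $\RR^d$. Combined with the logarithmic growth bound (iv), this produces a quantitative Gaussian dominator $w(t,x) \le \Psi(t,x) := C'(1 + t + |x|^2/t)$ for every $t>0$, finite everywhere.

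The final step is parabolic comparison against the cone-shaped supersolution $W(t,x) := M+\eta + K(|x| - \kappa(t-t_1))_+$ on $[t_1,\infty)\times\RR^d$. Choosing $K\kappa \le c_\eta$ makes $W$ a supersolution of $\partial_t W \ge \Delta W - c_\eta$ on $\{W > M+\eta\}$, since $\Delta|x| = (d-1)/|x| \ge 0$ only helps; choosing $K$ and $t_1$ large enough (depending on $\eta_i,\delta,\eta$ through $\Psi$) makes $W(t_1,\cdot) \ge w(t_1,\cdot)$ everywhere. Parabolic comparison applied in $\{w > M+\eta\}$ then gives $w \le W$, so $w(t,x)\le M+\eta$ whenever $|x| \le \kappa(t-t_1)$, and absorbing $t_1$ into $t_0$ finishes the proof.

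The main obstacle is precisely that $\phi(u(0,x))$ is allowed to be infinite outside the initial good ball $|x|\le\delta$, so one cannot compare $w$ to a supersolution at $t=0$ directly; Step~2 is what buys a strictly positive effective initial time $t_1$ at which $w$ has a quantitative finite upper bound against which a linear cone can be calibrated. A secondary and routine bookkeeping task is to verify that $K$, $\kappa$, and $t_0$ depend only on $\eta_i$, $\delta$, $\eta$ and the fixed repelling-function data $M$, $c_\eta$, $C$ from properties (ii)--(iv) and $\gamma_i$ from (v).
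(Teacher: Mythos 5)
The paper does not prove this theorem in-line; it invokes Propositions~1 and~2 of \cite{memoir}. Your Step~1 (convexity of $\phi$ converts the ODE Lyapunov property (ii) into the parabolic inequality $\partial_t w-\Delta w\le -c_\eta$ on $\{w>M+\eta\}$) is exactly the content of Proposition~1 there, and your Step~2 (the Gaussian lower bound on $u_i$ from (v), combined with (iv), giving $w(t,x)\le C'(1+t+|x|^2/t)$) is a correct and necessary preliminary. The gap is in Step~3: the comparison cannot be initialized at $t=t_1$. At the origin, all you know is $w(t_1,0)\le C'(1+t_1)$, which exceeds $W(t_1,0)=M+\eta$ for every $t_1$; and for large $|x|$ your bound on $w(t_1,x)$ is quadratic in $|x|$ while $W(t_1,x)=M+\eta+K|x|$ is linear, so no choice of $K$ and $t_1$ makes $W(t_1,\cdot)\ge w(t_1,\cdot)$. (There is also a sign slip: a supersolution needs $\partial_t W-\Delta W\ge -c_\eta$, i.e.\ $K\kappa+K(d-1)/|x|\le c_\eta$, so the positivity of $\Delta|x|$ works \emph{against} you and is singular where the kink of the cone passes near the origin.)

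The failure is not cosmetic. A single global comparison with a translating cone cannot produce linear-in-time spread here: where $w\sim|x|^2/t_1$ the reaction only removes mass at the constant rate $c_\eta$, so a pure comparison argument a priori needs time of order $|x|^2$, not $|x|$, to bring $w$ below $M+\eta$ at distance $|x|$. What is needed is the front-propagation structure of Proposition~2 of \cite{memoir}: first a generation step showing that $w(\cdot,T_0)\le M+\eta/2$ on some fixed ball (this already requires an argument, since your sketch never exhibits a single point where $w\le M+\eta$), and then an iterated propagation lemma of the form ``if $w\le M+\eta/2$ on $B(x_0,L)$ and $w\le K$ on $B(x_0,2L)$ at time $T$, the same holds with $L$ replaced by $L+\ell$ at time $T+\tau$,'' in the style of Aronson--Weinberger or the block constructions of Durrett and Neuhauser. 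Your Steps~1 and~2 supply the inputs to such an iteration, but the iteration itself is the heart of the proof and is missing.
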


\begin{theorem} \label{nail}
Suppose a repelling function exists for the replicator equation for the modified game $H$. 
If $\ep < \ep_0(G)$ then there is a nontrivial stationary
distribution for the spatial game with matrix ${\bf 1} + \ep^2 G$ that concentrates on $\Omega_0$, the configurations with
infinitely many 1's, 2's, and 3's.
\end{theorem}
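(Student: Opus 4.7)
The plan is to invoke the voter model perturbation machinery of \cite{CDP} and reduce the claim to an application of the hydrodynamic limit Theorem \ref{hydro} together with the repelling function bound in Theorem \ref{hammer}. First, I would recall from Section \ref{sec:vmp} that for either Birth-Death or Death-Birth updating, the spatial game with payoff matrix $\mathbf{1}+\ep^2 G$ has flip rates of the form $c^v_{i,j}+\ep^2 h^\ep_{i,j}$ with $h^\ep_{i,j}$ satisfying the structural assumption \eqref{vptech} and the convergence $|h_{i,j}-h^\ep_{i,j}|\le C\ep^r$ required by \cite{CDP}. Hence the results of \cite{CDP}, extended from two to finitely many opinions as explained in Section \ref{sec:vmp}, apply to our process.

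Second, by Theorem \ref{hydro} the rescaled densities converge to the solution of the reaction-diffusion equation \eqref{RDE}, whose reaction term $\phi_H$ is $p$ times the right-hand side of the replicator equation for the modified matrix $H$. Rescaling time by a constant factor does not affect the repelling function property, so by hypothesis the replicator equation for $H$ admits a repelling function $\phi$ with constants $M$ and $C$ in the sense of Section \ref{ssec:repel}. Condition (v) on the vector field holds automatically for replicator equations since $\phi^i_H(u)$ vanishes when $u_i=0$. Theorem \ref{hammer} then furnishes, for any initial PDE datum that has all three densities bounded below by some $\eta_i>0$ on a ball of radius $\delta$, a cone $\{(t,x):|x|\le \kappa t,\ t\ge t_0\}$ on which $\phi(u(t,x))\le M+\eta$. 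Condition (iv) in the definition of repelling function then forces each $u_i(t,x)$ to be bounded below by a strictly positive constant $\eta^*$ on that cone.

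Third, I would invoke Theorem~1.5 of \cite{CDP} (in its multitype form discussed after Theorem \ref{hydro}). That theorem converts the PDE lower bound just established into a statement for the particle system: for $\ep<\ep_0(G)$, if we start $\xi^\ep_0$ from a product measure with positive densities of each strategy on a sufficiently large box, then for all sufficiently large rescaled times the empirical densities of each strategy $i$ in any box of side $O(1)$ inside the growing cone are bounded below by $\eta^*/2$ with high probability. This uniform-in-time lower bound is the ``propagation of coexistence'' input needed for the standard construction of a nontrivial stationary distribution. Averaging $\xi^\ep_t$ over time along a subsequence produces, by tightness, a stationary measure $\nu$; the density lower bound on the cone ensures, by translation invariance and Fatou's lemma, that $\nu(\xi(0)=i)\ge \eta^*/2$ for each $i$, whence by ergodicity (or a direct second-moment argument) $\nu(\Omega_0)=1$.

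The main obstacle is verifying the hypotheses of Theorem~1.5 of \cite{CDP} for our three-type setup. The paper \cite{CDP} is written in detail only for two opinions; however, as noted just after Theorem \ref{hydro}, the dual representation through coalescing random walks with perturbation-driven branching carries over unchanged, and the only real issue is checking that the time-averaging and box-coupling arguments in Section~1.4 of \cite{CDP} are not tied to binary state spaces. I expect this to be routine bookkeeping rather than substantive, and no new estimates should be required beyond the repelling function bound already supplied by Theorem \ref{hammer}.
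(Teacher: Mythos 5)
There is a genuine gap at the third step. You correctly assemble the two PDE ingredients (Theorem \ref{hydro} for the hydrodynamic limit and Theorem \ref{hammer} for the repelling-function lower bound on the densities in a growing cone), but the bridge from a finite-time PDE approximation to an all-time statement about the particle system is exactly the hard part, and you black-box it by citing ``Theorem~1.5 of \cite{CDP} in its multitype form.'' No such multitype form exists: Theorem~1.5 of \cite{CDP} is stated for two types and its hypotheses are phrased in terms of the cubic reaction term, not in terms of repelling functions, so it cannot be invoked here even in spirit without redoing its proof. More fundamentally, the convergence of the rescaled process to the PDE holds only over bounded rescaled time intervals (times of order $\ep^{-2}$ in the original clock); it does not by itself give the uniform-in-time density lower bound ``for all sufficiently large rescaled times'' that your time-averaging construction needs. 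Stochastic fluctuations can drive the particle system away from the PDE after $O(\ep^{-2})$ time, and controlling the accumulation of these errors over infinitely many time steps is precisely what requires an extra argument.

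The paper supplies that argument via a block construction: Theorem \ref{hammer} is used to verify the PDE hypothesis $(\star)$ (densities in $(A_i,B_i)$ on $[-L,L]^d$ at time $0$ imply densities in $(a_i,b_i)$ on $[-3L,3L]^d$ at time $T$), the finite-time PDE convergence then makes the corresponding block event for the particle system have probability close to $1$ when $\ep$ is small, and comparison with supercritical oriented percolation propagates the density bounds to all times; the stationary distribution is then extracted in the standard way (Chapter 6 of \cite{CDP}). This renormalization step is the essential missing idea in your proposal: without it, neither the uniform-in-time lower bound nor the conclusion $\nu(\Omega_0)=1$ for the Ces\`aro-limit measure can be justified. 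Your first two steps are sound and match the paper; you need to replace the appeal to Theorem~1.5 of \cite{CDP} with the block construction based on $(\star)$.
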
  

These results were stated in \cite{memoir} for systems with fast stirring. 
Perhaps the easiest way to convince the reader that everything is OK is to use the methods of Durrett and Neuhauser \cite{DN94} as explained in 
Section 9 of Durrett's St.~Flour notes \cite{DStFl}. One begins with the assumption on the limiting PDE

\mn
($\star$) There are constants $A_i < a_i < b_i < B_i$, $l$ and $T$ so that if $u_i(0,x) \in (A_i,B_i)$ when
$x\in [-L,L]^d$ then $u_i(x,T) \in (a_i,b_i)$ when $x\in [-3L,3L]^d$.

\mn
Theorem \ref{hammer} implies that if we take $A_i=\eta_i$ small enough then we can find $a_i$ and $b_i <B_i$ close to 1
so that ($\star$) holds.

Combining this with the convergence of the rescaled system to the PDE, we can define a block construction that guarantees
that if $(m,n)$ is wet, the density of type $i$ is $\ge a_i$ in $2mL + [-3L,3L]^d$ at time $(n+1)T$.
The block event has a probability that goes to 1 as $\ep\to 0$, and the result follows easily. For more details see Chapter 6 of \cite{CDP}.

\subsection{Boundary lemmas} \label{ssec:blemmas}

We will construct our repelling function as a sum of functions associated with the corners and the edges. 
The results we need are developed in Sections 1 and 2 of \cite{memoir}. Since that reference is not easily
available we will give the details here. The first step is create corner functions. In the case considered
the corner $u_1=0$, $u_2=0$ is unstable: $u_1$ and $u_2$ will each increase when they are small and positive.

\begin{lemma} \label{kornf}
Suppose that $\beta_2, \alpha_1>0$. Let $u(t)$ be a positive solution of the replicator equation. 
If $\eta_3$ is small and $g_3(u_1+u_2) = \log^-((u_1+u_2)/\eta_3)$ where $x^- = \max\{-x,0\}$
then $dg_3(u(t))/dt \le -\gamma_3$ when $0 < u_1+u_2 < \eta_3$.
\end{lemma}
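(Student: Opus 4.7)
The plan is to show that near the corner $u_1 = u_2 = 0$ the two vanishing coordinates grow at least exponentially along the replicator flow, which translates directly into a linear rate of decrease of $g_3$. I would start by writing out the fitnesses $F_i$ and the average fitness $\bar F = \sum_i u_i F_i$ for the matrix in \eqref{Gdef2}:
\begin{align*}
F_1 &= \alpha_3 u_2 + \beta_2(1-u_1-u_2),\\
F_2 &= \beta_3 u_1 + \alpha_1(1-u_1-u_2),\\
F_3 &= \alpha_2 u_1 + \beta_1 u_2,
\end{align*}
and observe that $F_1 = \beta_2 + O(u_1+u_2)$, $F_2 = \alpha_1 + O(u_1+u_2)$, while $F_3$ is $O(u_1+u_2)$ and hence $\bar F = u_1F_1 + u_2F_2 + u_3F_3 = O(u_1+u_2)$, with implicit constants depending only on the entries of the matrix.

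Next, using the strict positivity of $\beta_2$ and $\alpha_1$, I would choose $\eta_3>0$ small enough that on the slab $\{0 < u_1+u_2 \le \eta_3\} \cap \Gamma$ we have $F_1 - \bar F \ge \beta_2/2$ and $F_2 - \bar F \ge \alpha_1/2$. Setting $c = \tfrac{1}{2}\min(\beta_2,\alpha_1) > 0$, the replicator equation then gives
$$
\frac{d(u_1+u_2)}{dt} \;=\; u_1(F_1 - \bar F) + u_2(F_2 - \bar F) \;\ge\; c\,(u_1+u_2).
$$

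Finally, on the same region $g_3(u_1+u_2) = -\log((u_1+u_2)/\eta_3)$, so the chain rule gives
$$
\frac{d}{dt} g_3(u_1+u_2) \;=\; -\frac{1}{u_1+u_2}\cdot\frac{d(u_1+u_2)}{dt} \;\le\; -c,
$$
and the claim follows with $\gamma_3 = c$. There is no serious obstacle here: the argument is a direct Taylor expansion of the replicator vector field at the unstable corner. The only bit of care needed is to fix the smallness of $\eta_3$ in terms of the matrix entries so that the leading-order growth rates $\beta_2$ and $\alpha_1$ of $F_1$ and $F_2$ dominate the $O(u_1+u_2)$ correction coming from $\bar F$.
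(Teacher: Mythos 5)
Your proof is correct and follows essentially the same route as the paper's: both expand the replicator vector field at the corner $u_1=u_2=0$, use $\beta_2,\alpha_1>0$ to get $\tfrac{d}{dt}(u_1+u_2)\ge \tfrac12\min(\beta_2,\alpha_1)(u_1+u_2)$ for $u_1+u_2$ small (the paper phrases this as $u^THu\to 0$ and $du_1/dt+du_2/dt\ge u_1\beta_2/2+u_2\alpha_1/2$), and then conclude via the chain rule applied to $-\log((u_1+u_2)/\eta_3)$.
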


\begin{proof} 
To begin the calculation, we recall that the replicator equation is
\beq
\frac{du_i}{dt} = u_i \left( \sum_j H_{ij} u_j - u^T Hu \right)
\label{recrep}
\eeq
where
\beq
u^T H u = u_1u_2(\alpha_3+\beta_3) + u_1u_3(\alpha_2+\beta_2) + u_2u_3(\alpha_1+\beta_1).
\label{xGx}
\eeq
Calculus tells us that when $0 < u_1+u_2 < \eta_3$
$$
\frac{dg_3}{dt} = \frac{-1}{u_1+u_2} \cdot \left( \frac{du_1}{dt} + \frac{du_2}{dt} \right)
$$
Recalling that the game matrix is
$$
H = \begin{pmatrix}
0 & \alpha_3 & \beta_2 \\ \beta_3 & 0 & \alpha_1 \\ \alpha_2 & \beta_1 & 0 
\end{pmatrix}
$$
The last quantity is
$$
\frac{du_1}{dt} + \frac{du_2}{dt} = u_1( \alpha_3 u_2 + \beta_2 u_3 - u^T Hu ) + u_2(\beta_3 u_1 + \alpha_1 u_3 - u^T Hu )
$$
As $u_1,u_2 \to 0$, $u^T H u \to 0$, so if $\eta_3$ is small then when $u_1, u_2 < \eta_3$ we have
$$
\frac{du_1}{dt} + \frac{du_2}{dt} \ge  \frac{u_1\beta_2}{ 2} + \frac{u_2\alpha_1}{2}
$$
and the desired result follows. 
\end{proof}

We will soon be drowning in constants. To make sure that the bad constants don't get worse when we try to improve the good
ones, it is useful to define
\beq
C_G = \sum_i |\alpha_i| + |\beta_i|
\label{CGdef}
\eeq
If $u_i \ge 0$ and $u_1+u_2+u_3 = 1$ then each $u_i \le 1$ so
\beq
|u^tHu| \le C_G
\label{uHubd}
\eeq

The next case to consider is an edge with an attracting fixed point. This is the worst situation.
The function we will construct fails to be decreasing near the end points, but this can be taken care of using corner functions from Lemma \ref{kornf}
or a function from Lemma \ref{dom1}.

\begin{center}
\begin{picture}(180,65)
\put(30,20){\line(1,0){120}}
\put(30,20){\line(2,3){25}}
\put(150,20){\line(-2,3){25}}
\put(20,15){$1$}
\put(155,15){$3$} 
\put(88,18){$\bullet$}
\put(130,15){\vector(-1,0){30}} \put(50,15){\vector(1,0){30}} 
\put(45,20){\line(2,3){8}} \put(135,20){\line(-2,3){8}} \put(38,32){\line(1,0){104}}
\end{picture}
\end{center}

\begin{lemma} \label{afb}
Suppose that $(p_{13},q_{13})$ is an attracting fixed point on the side $u_2=0$
and that the $2$'s can invade this equilibrium. Let $\psi_i(u) = (\delta_2 - u_i)^{+2}$, i.e., the square 
of the positive part. Let $u(t)$ be a positive solution of the replicator equation and let 
$$
h_2(u) = u_1 - p_{13}\log(u_1 + u_2\psi_1(u)) + u_3 - q_{13}\log(u_3 + u_2\psi_3(u)) - \ep \log(u_2)
$$
If $\delta_2 < 1/4$ is chosen small enough then there are positive constants $\ep_2$, $\gamma_2$ so that then when $u_2 < \delta_2$, $\ep \le \ep_2$
$$
\frac{dh_2(u(t))}{dt} \le  \begin{cases} -\gamma_2 & u_1 , u_3 > \delta_2  \\  4C_G  & u_1 \le \delta_2 \\ 4C_G & u_3 \le \delta_2 \end{cases}
$$
\end{lemma}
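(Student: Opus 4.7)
The plan is to compute $dh_2/dt$ along the replicator flow and estimate each contribution in the three subregions. Set $\Phi_i(u) := (Hu)_i - u^T H u$ so that $du_i/dt = u_i\Phi_i(u)$ and $|\Phi_i(u)| \le 2C_G$ by \eqref{uHubd}. The $-\epsilon\log u_2$ piece of $h_2$ contributes simply $-\epsilon\Phi_2(u)$, a quantity uniformly bounded by $2\epsilon C_G$. The work is therefore concentrated on the derivative of the remaining piece $\tilde V := u_1 - p_{13}\log(u_1+u_2\psi_1) + u_3 - q_{13}\log(u_3+u_2\psi_3)$.

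In Case 1 ($u_1,u_3 > \delta_2$), $\psi_1 = \psi_3 = 0$, so $\tilde V$ reduces to the edge-game Lyapunov function $u_1-p_{13}\log u_1 + u_3-q_{13}\log u_3$. Direct expansion of $d\tilde V/dt = \Phi_1(u)(u_1-p_{13}) + \Phi_3(u)(u_3-q_{13})$ using $p_{13}=\beta_2/(\alpha_2+\beta_2)$, $q_{13}=\alpha_2/(\alpha_2+\beta_2)$, and $u_1+u_3=1-u_2$ isolates the edge contribution and a remainder,
\begin{equation*}
\frac{d\tilde V}{dt} = -(\alpha_2+\beta_2)(u_1-p_{13})^2 + R(u),
\end{equation*}
with $|R(u)| \le K u_2$ for a constant $K$ depending only on $C_G$. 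The invadability hypothesis translates into
\begin{equation*}
\Phi_2\big|_{(p_{13},0,q_{13})} = \frac{\beta_2\beta_3 + \alpha_1\alpha_2 - \alpha_2\beta_2}{\alpha_2+\beta_2} =: \lambda > 0,
\end{equation*}
so by continuity $\Phi_2(u) \ge \lambda/2$ once $(u_1,u_3,u_2)$ is close enough to the edge equilibrium. I then split Case 1 at a threshold $\eta$: on $\{|u_1-p_{13}| \ge \eta\}$ the quadratic term $-(\alpha_2+\beta_2)\eta^2$ dominates the $Ku_2 + 2\epsilon C_G$ correction; on $\{|u_1-p_{13}| < \eta\}$ the gain $-\epsilon\lambda/2$ from the invadability term dominates the small $Ku_2$ correction. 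Choosing $\eta$ small, then $\delta_2$ small compared to $\eta^2$, then $\epsilon_2$ appropriately, produces a uniform $\gamma_2 > 0$.

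For Cases 2 and 3 only the crude upper bound $4C_G$ is needed, and the role of $\psi_i(u) = (\delta_2-u_i)^{+2}$ is precisely to keep the denominator of the relevant log-derivative bounded below when $u_i \to 0$ with $u_2 > 0$. Taking Case 2 for concreteness, if $u_1 \le \delta_2/2$ then $\psi_1 \ge \delta_2^2/4$ and hence $u_1 + u_2\psi_1 \ge u_1 + u_2\delta_2^2/4$, while if $\delta_2/2 < u_1 \le \delta_2$ the denominator is already $\ge \delta_2/2$. The product rule gives
\begin{equation*}
\frac{d}{dt}(u_1+u_2\psi_1(u)) = \bigl(1+u_2\psi_1'(u_1)\bigr)\frac{du_1}{dt} + \psi_1(u_1)\frac{du_2}{dt},
\end{equation*}
and using $|\psi_1'(u_1)| \le 2\delta_2$, $|du_i/dt| \le 2u_i C_G$, and $\psi_1 \le \delta_2^2$, the numerator is bounded in absolute value by $2u_1 C_G + 2u_2\delta_2^2 C_G$. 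Dividing by the denominator bounds the corresponding log-derivative by a constant times $C_G$; the $u_3$-log term contributes just $\Phi_3$ since $\psi_3=0$ when $u_3>\delta_2$. Collecting all four contributions and using $\delta_2 < 1/4$ yields $dh_2/dt \le 4C_G$; Case 3 is symmetric.

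The main obstacle is the delicate parameter chase in Case 1, where the Lyapunov decay vanishes to second order at the fixed point while the invadability gain is only linear in $\epsilon$, so the two negativity mechanisms must be carefully balanced through the $\eta$-splitting above. Once that is organized, Cases 2 and 3 are routine: the cutoff $\psi_i$ was tailored precisely to cancel the $1/u_i$ singularity of the log-derivative against the $u_i$-factor in $du_i/dt$.
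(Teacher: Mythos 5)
Your proposal follows essentially the same route as the paper's proof: the same decomposition of $dh_2/dt$ into the quadratic Lyapunov decay $-(\alpha_2+\beta_2)(u_1-p_{13})^2$ inherited from the edge game, the invadability gain $-\ep\Phi_2$ supplied by the $-\ep\log u_2$ term, and an $O(u_2)$ remainder; and the same observation that $\psi_i$ cancels the $1/u_i$ singularity of the log-derivative against the factor $u_i$ in $du_i/dt$. There is, however, one step in your parameter chase that cannot be completed as written. In the near-fixed-point region $\{|u_1-p_{13}|<\eta\}$ your only negative contribution is $-\ep\lambda/2$, to be balanced against the remainder $Ku_2\le K\delta_2$. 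With your stated order of choices ($\eta$ first, then $\delta_2$ small compared to $\eta^2$, then $\ep_2$) this fails: once $\delta_2$ is fixed, no choice of $\ep_2$ makes $-\ep\lambda/2+K\delta_2\le-\gamma_2<0$ for \emph{all} $\ep\le\ep_2$, since shrinking $\ep$ only weakens the gain. The dependence must go the other way: $\delta_2$ has to be taken small relative to $\ep$ (so $\gamma_2$, which is of order $\min(\eta^2,\ep)$, depends on $\ep$), which is what the paper's proof does when it first secures the $O(\ep)$ lower bound on the $u_2$-free part in \eqref{L2step1} and only then chooses $\delta_2$ so that $C_2\delta_2$ is dominated. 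The fix is a one-line reordering, but as stated "then $\ep_2$ appropriately" is not a valid quantifier order for this region.

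A second, cosmetic point: in Cases 2 and 3 your direct quotient bound on $\frac{d}{dt}\log(u_1+u_2\psi_1)$ gives a numerator-to-denominator ratio of at most $4$, hence a contribution of order $8C_G$ from that single term, and summing your "four contributions" does not land at the stated $4C_G$. The paper instead bounds the \emph{difference} $\frac{d}{dt}\left[\log u_1-\log(u_1+u_2\psi_1)\right]$ by $2C_G$ and adds it to the already-negative bound from the first case, which is where $4C_G$ comes from. Since any finite constant suffices for the applications in Section \ref{ssec:Lexist}, this only affects the literal constant in the lemma, not its use.
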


\begin{itemize}
  \item 
The 1/4 here is to guarantee that the two diamond shaped bad regions do not intersect. We only use one constant to keep the geometry
of our regions simple.
 \item
To explain the definition of $h_2$, first consider
$$
h_2^0(u) = u_1 - p_{13}\log(u_1) + u_3 - q_{13}\log(u_3) - \ep \log(u_2)
$$
To explain the first four terms we note that
$$
\left( u - p_{13}\log u + 1-u - q_{13}\log(1-u) \right)' = \frac{-p_{13}}{u} + \frac{q_{13}}{1-u}
$$
has a minimum when $u=p_{13}$, so it will be decreasing along solutions of the ODE on the boundary.
The logarithmic divergence of the function near $u=0$ and $u=1$ keeps $dh_2/dt$ bounded away from 0
when $|u_1-p_{1,3}|, |u_3-q_{1,3}| \ge \eta > 0$.
The $\ep$ in front of $\log(u_2)$ implies that the impact of this term will be felt only near the boundary fixed
point, where the invadability condition will imply $h_2(u(t))$ is decreasing.
\item
Unfortunately $h^0_2$ is $\infty$ on the sides $u_1=0$ and $u_3=0$. To have $h_2$ infinite only when $u_2=0$ we
add the terms $u_2\psi_i(u)$ inside the logarithms. For $i=1,3$, since $\psi_i(u)=0$ when $u_i \ge \delta_2$ and we have squared
the positive part to make it go to 0 smoothly, the derivative are only changed when $u_i < \delta_2$.
\end{itemize}

\begin{proof}
We begin by computing $dh_2^0/dt$. From \eqref{recrep} and \eqref{xGx}, it follows that
\begin{align*}
\frac{dh_2^0}{dt} & = (u_1-p_{13})(\alpha_3 u_2 + \beta_2 u_3 - u^THu) \\
& + (u_3-q_{13})(\alpha_2 u_1 + \beta_1 u_2 - u^THu) \\
& - \ep (\beta_3 u_1 + \alpha_1 u_3 - u^THu)
\end{align*}
The terms that do not have $u_2$ are
\begin{align*}
& = (u_1-p_{13})[\beta_2 u_3 - u_1u_3(\alpha_2+\beta_2)] \\
& + (u_3 - q_{13})[\alpha_2 u_1 - u_1u_3(\alpha_2+\beta_2)] \\
& -\ep [\beta_3 u_1 + \alpha_1 u_3  - u_1u_3(\alpha_2+\beta_2)]
\end{align*}
Using the fact that $p_{13} = \beta_2/(\alpha_2+\beta_2)$ and $q_{13} = \alpha_2/(\alpha_2+\beta_2)$ we can write the above as
\begin{align}
= & -(\alpha_2+\beta_2)(u_1-p_{13})^2 u_3 -(\alpha_2+\beta_2)(u_3-q_{13})^2 u_1 
\nonumber\\
& -\ep [\beta_3 u_1 + \alpha_1 u_3  - u_1u_3(\alpha_2+\beta_2)]
\label{L2step1}
\end{align}
The sum of the first two terms is bounded away from 0, when $|u_1-p_{13}|, |u_3-q_{13}| \ge \eta_2$, and $u_2 \le 1/2$. 
When $u_1=p_{13}$ and $u_3=q_{13}$ the term in square brackets is $>0$ by the invadability condition \eqref{inv2}:
$$
\beta_3 \frac{\beta_2}{\alpha_2+\beta_2} + \alpha_1 \frac{\alpha_2}{\alpha_2+\beta_2}
 > \frac{\beta_2\alpha_2}{\alpha_2+\beta_2} = u_1u_3(\alpha_2+\beta_2)
$$
From this we see that if we choose $\eta_2$ small then the term in square brackets will be $>0$ when  $|u_1-p_{13}|, |u_3-q_{13}| \le \eta_2$.
Hence if $\ep_2$ is small and $\ep \le \ep_2$ then the expression in \eqref{L2step1} is bounded away from 0 when $u_2 \le 1/2$.

The terms that have $u_2$ are
\begin{align*}
& u_2 [ (u_1-p_{13}) \alpha_3 + (u_3-q_{13}) \beta_1 ] \\
& - \{ (u_1-p_{13}) + (u_3-q_{13})  - \ep \} (  u_1u_2(\alpha_3+\beta_3) + u_2u_3(\alpha_1+\beta_1) )
\end{align*}
The absolute value of the last expression is $\le C_2 u_2$, so we have shown that if $\delta_2$ is small then
$dh_2^0 \le -\gamma_2$ when $u_2 < \delta_2$.

This proves the first of our three conclusions. To prove the other two, it is enough to show
$$
\frac{d}{dt} \left| \log(u_1) - \log(u_1 + u_2\psi_1(u)) \right| \le 2C_G
$$
To do this we note that if $0 < u_1 < \delta_2$ the derivative is
\begin{align*}
\left[ 1 - \frac{u_1}{u_1+u_2\psi_1(u)} \cdot (1 - 2u_2(\delta_2-u_1)) \right] & (\alpha_3 u_2 + \beta_2u_3 - u^T Hu ) \\
 - \frac{u_2 \psi_1(u)}{u_1+u_2\psi_1(u)} \cdot &( \beta_3 u_1 + \alpha_1 u_3 - u^T Hu )
\end{align*}
The two fractions with denominator $u_1+u_2\psi_1(u)$ lie in $(0,1)$, so 
term in square brackets lies in $[-2u_2\delta_2,1]$ and the desired result follows from \eqref{CGdef} and \eqref{uHubd}. 
\end{proof}

The next case to consider is an edge where one strategy dominates the other (in their $2 \times 2$ subgame). 
In the first situation there is no bad region:

\begin{center}
\begin{picture}(180,65)
\put(30,20){\line(1,0){120}}
\put(30,20){\line(2,3){25}}
\put(150,20){\line(-2,3){25}}
\put(20,15){$1$}
\put(155,15){$2$} 
\put(75,15){\vector(1,0){30}} 
 \put(38,32){\line(1,0){104}}
\put(30,30){\vector(2,3){15}} 
\put(0,40){$\alpha_2>0$} 
\put(150,30){\vector(-2,3){15}}
\put(147,40){$\beta_1>0$} 
\end{picture}
\end{center}

\begin{lemma} \label{dom1}
Suppose $2 \gg 1$ $(\alpha_3 < 0 < \beta_3)$, and $\alpha_2, \beta_1>0$.
Let $u(t)$ be a positive solution of the replicator equation and 
let $h_3(u) = u_1 - \ep \log(u_3)$. If $\ep_3$, $\delta_3$, and $\gamma_3$ are chosen small enough then $dh_3(u(t))/dt \le - \gamma_3$ 
when $u_3 < \delta_3$, and $\ep \le \ep_3$. 
\end{lemma}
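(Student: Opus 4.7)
The plan is to compute $dh_3/dt$ directly from the replicator equation and decompose it into two pieces whose signs can be controlled separately. Using \eqref{recrep} with the game matrix \eqref{Gdef2},
$$
\frac{dh_3}{dt} = \frac{du_1}{dt} - \frac{\ep}{u_3}\frac{du_3}{dt} = T_1 - \ep T_2,
$$
where $T_1 := u_1(\alpha_3 u_2 + \beta_2 u_3 - u^T H u)$ and $T_2 := \alpha_2 u_1 + \beta_1 u_2 - u^T H u$.

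First I would show that $T_1$ is essentially non-positive on a neighborhood of the edge $u_3 = 0$. On that edge $u^T H u = u_1 u_2(\alpha_3 + \beta_3)$ and $u_1 + u_2 = 1$, so
$$
T_1\big|_{u_3 = 0} = u_1 u_2\bigl[\alpha_3 u_2 - u_1 \beta_3\bigr].
$$
Since $\alpha_3 < 0 < \beta_3$ the bracket equals $-(|\alpha_3| u_2 + \beta_3 u_1)$, which is $\le -m$ where $m := \min(|\alpha_3|, \beta_3) > 0$; hence $T_1|_{u_3 = 0} \le -m\, u_1 u_2 \le 0$. For $u_3 < \delta_3$ the extra pieces of $T_1$ (the term $u_1\beta_2 u_3$ and the $u_3$-parts of $u_1 \cdot u^T H u$) are bounded in absolute value by $C_1 \delta_3$ for a constant $C_1$ depending only on $G$, giving $T_1 \le -m u_1 u_2 + C_1 \delta_3$.

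Next I would show that $-\ep T_2$ delivers a definite negative contribution precisely where $T_1$ vanishes, namely near the corners $u_1 u_2 = 0$ of the edge $u_3 = 0$. The hypothesis $\alpha_2, \beta_1 > 0$ gives $\alpha_2 u_1 + \beta_1 u_2 \ge m'(u_1 + u_2) \ge m'/2$ with $m' := \min(\alpha_2, \beta_1) > 0$ (assuming $\delta_3 \le 1/2$), while $u^T H u \le u_1 u_2(\alpha_3 + \beta_3)^+ + C_2\delta_3$. Hence $T_2 \ge m'/2 - u_1 u_2(\alpha_3 + \beta_3)^+ - C_2 \delta_3$, and combining the two estimates,
$$
\frac{dh_3}{dt} \le -u_1 u_2 \bigl[m - \ep(\alpha_3 + \beta_3)^+\bigr] + (C_1 + \ep C_2)\delta_3 - \ep m'/2.
$$
Choose $\ep_3$ so small that $\ep_3(\alpha_3 + \beta_3)^+ \le m/2$; then for $\ep \le \ep_3$ the bracket is $\ge m/2$ and the $u_1 u_2$ term is non-positive and may be dropped. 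Finally pick $\delta_3$ small enough that $(C_1 + \ep_3 C_2)\delta_3 \le \ep m'/4$ for the relevant range of $\ep$, and set $\gamma_3 := \ep m'/4$; this yields $dh_3/dt \le -\gamma_3$ on $\{u_3 < \delta_3\}$.

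The main obstacle is that $T_1$ vanishes at both corners $u_1 = 0$ and $u_2 = 0$ of the edge, so the decrease of $h_3$ there must come entirely from $-\ep T_2$. This is exactly where the hypothesis $\alpha_2, \beta_1 > 0$ enters: it says that strategy $3$ strictly invades both pure equilibria lying on the edge $u_3 = 0$, so the logarithmic barrier $-\ep \log u_3$ is essential even near the corners. Consequently $\delta_3$ must be chosen small relative to $\ep$, which is why the parameters $\ep_3, \delta_3, \gamma_3$ in the lemma must be selected in a nested order rather than independently.
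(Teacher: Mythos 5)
Your proof is correct and takes essentially the same route as the paper's: both split $dh_3/dt$ into the part that survives on the edge $u_3=0$ (where $2\gg 1$ yields a term $\le -\mathrm{const}\cdot u_1u_2$) plus the invadability contribution $-\ep(\alpha_2 u_1+\beta_1 u_2)$, and bound the remaining $u_3$-dependent terms by a constant times $\delta_3$. Your closing remark that $\delta_3$ must be chosen small relative to $\ep$ is also exactly how the constants are nested in the paper's argument.
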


\begin{proof}
Using \eqref{recrep} we have
$$
\frac{dh_3}{dt}  =  u_1( \alpha_3 u_2 + \beta_2 u_3 - u^THu ) - \ep (\alpha_2 u_1 + \beta_1 u_2 - u^THu )
$$
Consulting  \eqref{xGx}, we see that the terms that do not involve $u_3$ are:
\begin{align}
& \alpha_3 u_1u_2 - u^2_1u_2(\alpha_3+\beta_3) - \ep (\alpha_2 u_1 + \beta_1 u_2) + \ep u_1u_2(\alpha_3+\beta_3) 
\nonumber\\
& = u_1u_2 [\alpha_3 (1-u_1+\ep) - \beta_3 (u_1-\ep) ] - \ep(\alpha_2 u_1 + \beta_1 u_2)
\label{L2step2}
\end{align}
If $\ep$ is chosen small enough then (here we use $\alpha_3 < 0 < \beta_3$)
$$
(1-u_1+\ep) \alpha_3  - \beta_3(u_1-\ep)  =  (1-u_1) \alpha_3 - \beta_3 u_1 + \ep(\alpha_3+\beta_3) \le -\eta_1 < 0
$$
for $u_1 \in [0,1]$. In \eqref{L2step2} this term is multiplied by $u_1u_2$ which vanishes when $u_1$ or $u_2$ is zero
Using $\alpha_2, \beta_1 >0$, the terms that do not involve $u_3$ are 
\beq
\le -\eta_1 u_1u_2 - \ep( \alpha_2 u_1 + \beta_1 u_2) \le -\eta_2 <  0
\label{plb}
\eeq
when $u_3 \le 1/2$. The terms that involve $u_3$ are
$$
u_3 \{ u_1 \beta_2 - (u_1-\ep) [ u_1(\alpha_3+\beta_3) + u_2 (\alpha_1+\beta_1) ] \}
$$
The absolute value of the last expression is $\le C_G u_3$ where $C_G$ is defined in \eqref{CGdef},
and the desired result follows.
\end{proof}

In the next result we reverse the condition $\alpha_2>0$.

\begin{center}
\begin{picture}(180,65)
\put(30,20){\line(1,0){120}}
\put(30,20){\line(2,3){25}}
\put(150,20){\line(-2,3){25}}
\put(20,15){$1$}
\put(155,15){$2$} 
\put(75,15){\vector(1,0){30}} 
\put(45,52){\vector(-2,-3){15}} 
\put(0,40){$\alpha_2\le 0$} 
\put(150,30){\vector(-2,3){15}}
\put(147,40){$\beta_1>0$} 
\put(45,20){\line(2,3){8}} \put(38,32){\line(1,0){104}}
\end{picture}
\end{center}

In this case $h_2(u(t))$ fails to be decreasing near $u_1=1$, but this can be counteracted by using a function from Lemma \ref{dom1} 
or Lemma \ref{dom2}. 

\begin{lemma} \label{dom2}
Suppose $2 \gg 1$ $(\alpha_3 > 0 > \beta_3)$, $\beta_1>0$, and $\alpha_2\le 0$.
Let $u(t)$ be a positive solution of the replicator equation and let $h_3(u) = u_1 - \ep \log(u_3)$.
Given $\delta_2 >0$,  we can pick $\delta_3, \ep_2, \gamma_3$ positive
so that if $\ep \le \ep_2$ and  $u_3 \le \delta_3$, then 
$$
\frac{dh_3(u(t))}{dt} \le 
\begin{cases} 
- \gamma_3 & \hbox{when $u_2 \ge \delta_2$}\\
2C_G \delta_3 & \hbox{when $u_2 \le \delta_2$} 
\end{cases}
$$
where $C_G$ is the constant given in \eqref{CGdef}.
\end{lemma}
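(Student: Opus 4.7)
The plan is to imitate the proof of Lemma~\ref{dom1}, taking care to isolate the single place where the hypothesis $\alpha_2>0$ was used there. First I would compute from the replicator equation
\[
\frac{dh_3}{dt} = u_1(\alpha_3 u_2 + \beta_2 u_3 - u^T Hu) - \ep(\alpha_2 u_1 + \beta_1 u_2 - u^T Hu),
\]
separate the $u_3$-containing part (which, since every monomial of $u^THu$ carries a factor of $u_2$ or $u_3$, has absolute value at most $C_G u_3\le C_G\delta_3$) from the rest, and rewrite the remainder exactly as in \eqref{L2step2}:
\[
u_1 u_2\bigl[(1-u_1+\ep)\alpha_3 - (u_1-\ep)\beta_3\bigr] - \ep(\alpha_2 u_1 + \beta_1 u_2).
\]
The dominance assumption makes the bracket uniformly $\le -\eta_1<0$ on $u_1\in[0,1]$ once $\ep$ is small.

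For the regime $u_2\ge\delta_2$, the new difficulty compared with Lemma~\ref{dom1} is that $-\ep\alpha_2 u_1 = \ep|\alpha_2|u_1$ is now positive. I would absorb this in two complementary sub-regimes. When $u_1\ge 1/2$, the $-\eta_1 u_1 u_2$ contribution is bounded above by $-\eta_1\delta_2/2$, which is independent of $\ep$; taking $\ep$ small enough that $\ep|\alpha_2|\le \eta_1\delta_2/4$ leaves a net negative bound of order $-\eta_1\delta_2/4$. When $u_1\le 1/2$, after shrinking $\delta_3$ so that $u_3\le\delta_3<1/4$, we have $u_2\ge 1/2-\delta_3\ge 1/4$, so $-\ep\beta_1 u_2\le -\ep\beta_1/4$, which combined with $-\eta_1 u_1 u_2\le 0$ absorbs $\ep|\alpha_2|u_1\le \ep|\alpha_2|/2$. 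Either sub-case produces $dh_3/dt\le -\gamma_3 + C_G\delta_3$ for some $\gamma_3=\gamma_3(\delta_2)>0$; shrinking $\delta_3$ further so that $C_G\delta_3\le\gamma_3/2$ gives the claimed decrease.

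For the regime $u_2\le\delta_2$ this absorption fails because $u_2$ can vanish near the corner $(1,0,0)$, and we only need a mild upper bound. The first summand $u_1u_2[\cdots]$ is $\le 0$ by $\alpha_3<0<\beta_3$; the summand $-\ep\beta_1 u_2$ is $\le 0$; and the problematic summand satisfies $-\ep\alpha_2 u_1 \le \ep|\alpha_2| \le \ep C_G$. Adding the $C_G\delta_3$ bound on the $u_3$-part and imposing $\ep_2\le\delta_3$ yields $dh_3/dt\le \ep C_G + C_G\delta_3 \le 2C_G\delta_3$.

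The main obstacle is not analytic but combinatorial: sequencing $\delta_3$, $\ep_2$, and $\gamma_3$ consistently. The crucial observation is that the $u_1\ge 1/2$ vs.\ $u_1\le 1/2$ split in the first regime makes $\gamma_3$ depend only on the game matrix and $\delta_2$, not on $\ep$; with that in hand the constraints $\delta_3\le\gamma_3/(2C_G)$ from the first case and $\ep_2\le\delta_3$ from the second are trivially compatible, so one can pick $\delta_3$ first and then $\ep_2$ small enough. If one instead tries the naive single bound $-\ep\beta_1\delta_2$ in the first regime, $\gamma_3$ becomes proportional to $\ep$, and the two constraints are inconsistent unless $\beta_1\delta_2>2C_G$, which the hypotheses do not ensure; avoiding this pitfall is the only subtle point.
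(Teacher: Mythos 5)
Your setup and your treatment of the regime $u_2\le\delta_2$ coincide with the paper's proof: the same decomposition into the $u_3$-part (bounded by $C_G u_3\le C_G\delta_3$) and the remainder \eqref{L2step2}, the same use of the dominance condition to make the bracket uniformly $\le-\eta_1<0$, and the same chain $-\ep\alpha_2 u_1+C_G\delta_3\le \ep C_G+C_G\delta_3\le 2C_G\delta_3$ under $\ep\le\delta_3$. (The sign convention ``$\alpha_3>0>\beta_3$'' in the statement is a typo; like the paper's proof, you correctly work with $\alpha_3<0<\beta_3$, consistent with Lemma \ref{dom1}.)

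The gap is in the regime $u_2\ge\delta_2$, precisely at the point you single out as the crucial one: the claim that the split on $u_1\ge 1/2$ versus $u_1\le 1/2$ makes $\gamma_3$ independent of $\ep$. It does not, and no argument can. At the corner $(0,1,0)$, which lies in the closure of $\{u_2\ge\delta_2,\,u_3\le\delta_3\}$, one has $du_1/dt=0$ and $u^THu=0$, so $dh_3/dt=-\ep(\alpha_2u_1+\beta_1u_2-u^THu)=-\ep\beta_1$ exactly; hence any valid $\gamma_3$ satisfies $\gamma_3\le\ep\beta_1$ and must vanish as $\ep\to0$. Concretely, in your sub-case $u_1\le 1/2$ the only surviving negative term is $-\ep\beta_1u_2=O(\ep)$, and the sentence ``$-\ep\beta_1/4$ \dots absorbs $\ep|\alpha_2|/2$'' taken literally needs $\beta_1\ge 2|\alpha_2|$, which is not a hypothesis; the correct absorption is $-\eta_1u_1u_2+\ep|\alpha_2|u_1\le u_1(\ep|\alpha_2|-\eta_1/4)\le0$ for $\ep$ small, leaving $-\ep\beta_1u_2\le-\ep\beta_1/4$. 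So $\gamma_3$ is necessarily of order $\ep$ there, and the bookkeeping in your last paragraph, which rests on $\ep$-independence, collapses. To be fair, the paper's own proof is no cleaner at this spot --- it drops $-\ep\beta_1u_2$ altogether, after which its displayed inequality fails near $(0,1,0)$ where it reduces to $C_G\delta_3\le-\gamma_3$ --- and the honest reading of the lemma is that $\ep$ is fixed first (it is a parameter of $h_3$, chosen once in the construction of the repelling function) and $\gamma_3$, $\delta_3$ are then allowed to depend on it. With that quantifier order your computation, like the paper's, does establish what the application in Section \ref{ssec:Lexist} requires.
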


\begin{proof}
As in Lemma \ref{dom1}, the terms with $u_3$ are $\le C_G\delta_3$ while from \eqref{L2step2} the terms that do not involve $u_3$ are:
$$
= u_1u_2 [\alpha_3 (1-u_1+\ep) - \beta_3 (u_1-\ep) ] - \ep(\alpha_2 u_1 + \beta_1 u_2)
$$
To bound the first term we note that
$$
\alpha_3 (1-u_1+\ep) - \beta_3 (u_1-\ep) \le (1-u_1) \alpha_3 - \beta_3 u_1 + C_G \ep \le - \eta_3 <0
$$
if $\ep$ is small. This implies, as in the previous proof, that the terms that do not involve $u_3$ are
\beq
\le -\eta_1 u_1u_2 - \ep( \alpha_2 u_1 + \beta_1 u_2)
\label{plb2}
\eeq
but this time $\alpha_2 < 0$. Dropping the negative term $-\ep\beta_1 u_2$ the above
$$
\le u_1 (-\eta_1u_2 -\ep \alpha_2).
$$
By making $\ep$ smaller and then choosing $\delta_3$ small there is a constant $\gamma_3>0$ so that 
$$
u_1 (-\eta_1u_2 -\ep \alpha_2) + C_G\delta_3 \le - \gamma_3 
$$
when $u_2 \ge \delta_2$. For $u_2\le \delta_2$ we note that the quantity in \eqref{plb2} is $\le -\ep\alpha_2$. If $\ep \le \delta_3$ then
$$
-\alpha_2 \ep + C_G \delta_3 \le 2C_G\delta_3
$$
and we have the desired result.
\end{proof}

\subsection{Results for three classes of examples} \label{ssec:Lexist}

\mn
{\bf Example 7.1.} Three stable invadable boundary fixed points.

\begin{center}
\begin{picture}(180,130)
\put(87,125){1}\put(20,25){3}\put(155,25){2}
\put(59,76){$\bullet$}\put(87,27){$\bullet$}\put(116,76){$\bullet$}\put(87,55){$\bullet$} 
\put(65,76){\vector(3,-2){20}} \put(90,32){\vector(0,1){20}} \put(115,76){\vector(-3,-2){20}} 
\put(30,30){\line(2,3){60}}\put(35,48){\vector(2,3){15}}\put(80,115){\vector(-2,-3){15}} 
\put(150,30){\line(-2,3){60}} \put(143,50){\vector(-2,3){15}}\put(103,110){\vector(2,-3){15}} 
\put(30,30){\line(1,0){120}}\put(45,25){\vector(1,0){30}}\put(135,25){\vector(-1,0){30}} 
\put(76,99){\line(1,0){28}} \put(60,30){\line(-2,3){15}} \put(120,30){\line(2,3){15}}
\put(36,30){\line(2,3){57}} \put(144,30){\line(-2,3){57}} \put(34,36){\line(1,0){112}}
\end{picture}
\end{center}

\noindent
Given $k$,  let ${i,j} = \{1,2,3\} - \{k\}$.
Using Lemma \ref{kornf} we can defined corner functions $g_k$ for each corner, which 
$dg_k/dt \le -\gamma_k$ when $u_i + u_j < \eta_k$. Once this is done 
we use Lemma \ref{afb} to construct functions $h_i$ for each edge with associated constants $\delta_i$, which we all
chose all to be equal to $\delta$, and so that $2\delta < \eta_k$ for each $k$. This guarantees that the bad regions 
for the functions $h_i$ which are the small diamonds in the corners
are inside the good regions for some $g_j$. Now pick $M_i$ large enough
so that $\{ h_i > M_i \} \subset \{ u_i < \delta \}$ and let $\bar h_i = \max\{ h_i, M_i\}$.
If $\pi_i$ are chosen small enough then
$$
\sum_i g_i + \sum_j \pi_j \bar h_j 
$$
is a repelling function.

\mn
{\bf Example 7.2.} Two stable invadable boundary fixed points

\begin{center}
\begin{picture}(180,130)
\put(87,125){1}\put(20,25){3}\put(155,25){2}
\put(59,76){$\bullet$}\put(87,27){$\bullet$}
\put(87,55){$\bullet$} \put(65,76){\vector(3,-2){20}} \put(90,32){\vector(0,1){20}} 
\put(30,30){\line(2,3){60}}\put(35,48){\vector(2,3){15}}\put(80,115){\vector(-2,-3){15}} 
\put(150,30){\line(-2,3){60}}\put(135,60){\vector(-2,3){20}} 
\put(30,30){\line(1,0){120}}\put(45,25){\vector(1,0){30}}\put(135,25){\vector(-1,0){30}} 
\put(60,30){\line(-2,3){15}} \put(135,30){\line(-2,3){52}}
\put(36,30){\line(2,3){57}} \put(34,36){\line(1,0){112}} 
\end{picture}
\end{center}

We can use Lemma \ref{kornf} on the corner $u_3=1$ to produce $g_3$ and Lemma \ref{dom1} to the side $u_3=0$ to get $h_3$.
Pick $M_3$ so that $\{ h_3 > M_3 \} \subset \{ u_3 < \delta_{3} \}$. 
We now apply Lemma \ref{afb} to the sides $u_1=0$ and $u_2=0$ with the associated constants
$\delta_{1}$ and $\delta_{2}$ chosen so that the bad regions for these functions are inside the regions
where $g_3$ and $h_3$ are strictly decreasing.  Now pick $M_i$, $i=1,2$ large enough
so that $\{ h_i > M_i \} \subset \{ u_i < \delta_{i} \}$. If $\pi_1$ and $\pi_2$ are chosen small enough then
$$
g_3 + \bar h_3 + \pi_1 \bar h_1 + \pi_2 \bar h_2
$$
is a repelling function.

\mn
{\bf Example 7.3.} One stable invadable fixed point.

\begin{center}
\begin{picture}(180,150)
\put(30,30){\line(1,0){120}}\put(30,30){\line(2,3){60}}\put(150,30){\line(-2,3){60}}
\put(90,125){$1$}\put(20,25){$3$}\put(155,25){$2$} 
\put(59,76){$\bullet$}
\put(43,58){\vector(2,3){10}}\put(75,105){\vector(-2,-3){10}} 
\put(87,55){$\bullet$} \put(65,76){\vector(3,-2){20}} 
\put(115,95){\vector(2,-3){20}} 
\put(105,25){\vector(-1,0){30}} 
\put(135,30){\line(-2,3){52}}
\put(36,39){\line(1,0){108}}
\put(36,30){\line(2,3){57}} 
\end{picture}
\end{center}

Apply Lemma \ref{dom1} to the side $u_3=0$ to get $h_3$.
Pick $M_3$ so that $\{ h_3 > M_3 \} \subset \{ u_3 < \delta_{3} \}$. 
Use Lemma \ref{dom2} on the side $u_1=0$ to get $h_1$ and choose $\delta_1$
small enough so that the bad region for $h_1$ is inside the region where $h_3$
is strictly decreasing. Pick $M_1$ so that $\{ h_1 > M_1 \} \subset \{ u_1 < \delta_1 \}$.
Now apply Lemma \ref{afb} to the side $u_2=0$ and pick $\delta_2$ so that the bad regions for
$h_2$ are inside good regions for $h_1$ and $h_3$.
Pick $M_2$ so that $\{ h_2 > M_2 \} \subset \{ u_2 < \delta_2 \}$.
If we choose $\pi_1$ small and then choose $\pi_2$ small then
$$
\bar h_3 + \pi_1 \bar h_1 + \pi_2 \bar h_2
$$
is a repelling function. 

\mn
Unfortunately we are not able to prove a result for the rock-paper-scissors case.

\subsection{Almost constant sum games} \label{ssec:acs}

The results in the previous section prove the existence of a stationary distribution but does not provide much information
about the frequencies of the three strategies. If we can find a Lyapunov function that is strictly decreasing
over the whole domain then we can conclude that when $\ep$ is small
the frequencies of strategies in the spatial game are almost the same as those of the equilibrium in the modified replicator equation.
Generalizing the calculation from the proof of Theorem \ref{HSrps} we can prove:

\begin{theorem} \label{Limpconv} 
Suppose that the three strategy game $H$ has (i) zeros on the diagonal, (ii) an interior equilibrium $\rho$, and that $H$ is almost constant sum: $H_{ij}+H_{ji} = \gamma + \eta_{ij}$  with $\gamma > 0$ and $\max_{i,j} |\eta_{i,j}| < \gamma/2$. Then $V(u) = \sum_i u_i - \rho_i \log u_i$
is a repelling function with $M=0$, i.e., it is always decreasing not just near the boundary. This implies that there is coexistence and that 
for any $\delta>0$ if $\ep < \ep_0(\delta)$ and $\mu$ is any stationary distribution concentrating on configurations with infinitely many 1's, 2's and 3's
we have
$$
\sup_x |\mu(\xi(x) = i) - \rho_i| < \delta
$$
\end{theorem}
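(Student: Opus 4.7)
The plan is to follow the template of the constant-sum Lyapunov calculation in the proof of Theorem \ref{HSrps}, but written additively so that $V$ blows up (rather than vanishes) on the boundary. First I would compute $dV/dt$ along trajectories of the replicator equation for $H$. Differentiating and using $\dot u_i = u_i[(Hu)_i - u^THu]$ gives
$$
\frac{dV}{dt} = \sum_i (u_i - \rho_i)\bigl[(Hu)_i - u^THu\bigr] = (u-\rho)^T Hu,
$$
because $\sum_i(u_i-\rho_i) = 0$. Since $\rho$ is an interior equilibrium, the components of $H\rho$ all agree, so $(u-\rho)^T H\rho = 0$ and therefore
$$
\frac{dV}{dt} = (u-\rho)^T H (u-\rho) = \xi^T H\xi, \qquad \xi := u - \rho.
$$

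Next I would exploit the almost-constant-sum hypothesis by symmetrizing. Because $H_{ii}=0$,
$$
2\xi^T H\xi \;=\; \sum_{i \ne j}(H_{ij}+H_{ji})\xi_i\xi_j \;=\; \gamma\sum_{i\ne j}\xi_i\xi_j \;+\; \sum_{i \ne j}\eta_{ij}\xi_i\xi_j.
$$
Using $\sum_i \xi_i = 0$ one has $\sum_{i \ne j}\xi_i\xi_j = -\|\xi\|^2$, giving the constant-sum piece $-\gamma\|\xi\|^2$ (this is the step borrowed directly from Theorem \ref{HSrps}). For the perturbation, the bound $|\xi_i\xi_j|\le(\xi_i^2+\xi_j^2)/2$ together with the fact that each $\xi_i^2$ appears in exactly two ordered off-diagonal pairs when $n=3$ gives $\sum_{i\ne j}|\xi_i\xi_j| \le 2\|\xi\|^2$, hence $|\sum_{i\ne j}\eta_{ij}\xi_i\xi_j| \le 2\max_{i,j}|\eta_{ij}|\cdot \|\xi\|^2$. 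Combining,
$$
\frac{dV}{dt} \;=\; \xi^T H\xi \;\le\; -\tfrac12\bigl(\gamma - 2\max_{i,j}|\eta_{ij}|\bigr)\|u-\rho\|^2 \;=:\; -c\|u-\rho\|^2,
$$
with $c>0$ thanks to the hypothesis $\max|\eta_{ij}|<\gamma/2$.

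Finally I would verify the repelling-function conditions of Section \ref{ssec:repel}: convexity is immediate from the positive-definite Hessian $\mathrm{diag}(\rho_i/u_i^2)$; $V$ equals $+\infty$ exactly on $\partial\Gamma$ since each $\rho_i>0$; and $V(u)\le 1 + (\max_i\rho_i)\sum_i\log^- u_i$ supplies the logarithmic growth bound (iv). For property (ii) I would pass to $\tilde V := V - V(\rho)$, which has minimum $0$ attained uniquely at $\rho$: for any $\delta>0$ the set $\{\tilde V\ge \delta\}$ has $\|u-\rho\|$ bounded below by some $\kappa_\delta>0$ (by continuity and compactness of sublevel sets on $\Gamma$), so the estimate above yields $d\tilde V/dt\le -c\kappa_\delta^2$ there. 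Thus $\tilde V$ is a repelling function with $M=0$. Coexistence and the concentration claim then follow by feeding this into Theorem \ref{nail} and the three-strategy analogue of Theorem 1.4 of \cite{CDP} (the same machinery used at the end of Section \ref{ssec:acs} and in the $S_1$ part of Theorem \ref{CDPG}), which applies precisely because $V$ is strictly decreasing everywhere except at its unique minimum $\rho$.

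The only delicate point is the quadratic-form estimate: one must use the symmetrized AM--GM bound $\sum_{i\ne j}|\xi_i\xi_j|\le 2\|\xi\|^2$ (rather than a cruder Cauchy--Schwarz type bound) in order to get the sharp threshold $\max|\eta_{ij}|<\gamma/2$ that appears in the hypothesis. Everything else piggybacks on the rock-paper-scissors Lyapunov computation and on the pre-existing framework of Section \ref{ssec:repel} and \cite{CDP}, so no further obstacle is anticipated.
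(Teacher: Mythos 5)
Your proposal is correct and follows essentially the same route as the paper: compute $dV/dt=(u-\rho)^TH(u-\rho)$, symmetrize using $H_{ii}=0$, extract $-\tfrac{\gamma}{2}\|u-\rho\|^2$ from the constant-sum part via $\sum_i(u_i-\rho_i)=0$, and bound the $\eta_{ij}$ remainder by AM--GM to get strict decrease under $\max|\eta_{ij}|<\gamma/2$. Your only additions are the explicit verification of the repelling-function axioms (which the paper dismisses as routine), including the sensible normalization $\tilde V=V-V(\rho)$ so that condition (ii) with $M=0$ holds at the minimum.
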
 

\noindent
Note that due to the formula for the perturbation \eqref{pertm} $G$ is almost constant sum if and only if $H$ is.

\begin{proof}
If $u(t)$ evolves according to replicator equation for $H$ then
$$
\frac{dV(u)}{dt} = \sum_i (u_i-\rho_i) \left( \sum_j H_{ij} u_j - \sum_{k,j} u_k H_{kj} u_j \right)
$$
Since $\sum_i (u_i-\rho_i)=0$ and $\sum_j H_{ij} \rho_j$ is constant in $i$,
\begin{align*}
\frac{dV(u)}{dt} & = \sum_{i,j} (u_i-\rho_i) H_{i,j} u_j \\
& =  \sum_{i,j} (u_i-\rho_i) H_{i,j} (u_j-\rho_j) =  \sum_{i<j} (u_i-\rho_i) [H_{i,j}+H_{j,i}] (u_j-\rho_j)
\end{align*}
Since $0 = (\sum_i [u_i - \rho_i] )^2$ we have
$$
\sum_{i<j} (u_i-\rho_i)\gamma (u_j-\rho_j) = -\frac{\gamma}{2} \sum_i (u_i-\rho_i)^2
$$
To bound the remaining piece we note that $|ab| \le (a^2+b^2)/2$ so
$$  
\sum_{i<j} |\eta_{ij}| \cdot |u_i-\rho_i| \cdot |u_j-\rho_j| \le \max_{ij} |\eta_{ij}| \sum_i (u_i-\rho_i)^2
$$
so if $\max_{ij} |\eta_{ij}| < \gamma/2$ then $V$ is a convex Lyapunov function. It is not hard to checking
that the other conditions to be a repelling function are satisfied.  

Coexistence now follows from Theorem \ref{nail} for the reasons indicated just after its statement, but now we must take
$a_i =\rho_i - \delta$ and $b_i = \rho_i+\delta$, and work a little harder as in Chapter 6 of \cite{CDP} to make
sure the regions controlled by the block construction cover most of the space.
\end{proof}

\subsection{Tarnita's formula} \label{ssec:TF2}

Tarnita, Wage and Nowak \cite{TWN} have generalized Theorem \ref{TF2} to games with $n>2$ strategies.
Theorem \ref{TF2} shows that for two strategy games, their formula for games in a finite population with weak selection 
agrees with our analysis of games on the infinite lattice with small selection. The aim of this section is
to show that in the special case of almost constant sum games the two conditions are different.
Their formula is linear in the coefficients of the game matrix while ours is quadratic.

In \cite{TWN} a strategy is said to be favored by selection if, in the presence of weak selection, its frequency 
is $> 1/n$. Their main result is that strategy $k$ is favored by selection if 
\beq
(\sigma_1 a_{k,k} + \bar a_{k,*} - \bar a_{*,k} - \sigma_1 \bar a_{*,*}) + \sigma_2 ( \bar a_{k,*} - \bar a) >0
\label{TFn}
\eeq
The parameters $\sigma_1$ and $\sigma_2$ depend on the population structure, the update rule and the
mutation rate, but they do not depend on the number of strategies or on the entries $a_{ij}$ of the payoff matrix.
The coefficients are
\begin{align*}
\bar a_{*,*}  = (1/n) \sum_{m=1}^n a_{m,m} & \qquad \bar a_{*,k}  = (1/n) \sum_{m=1}^n a_{m,k}\\
\bar a_{k,*}  = (1/n) \sum_{m=1}^n a_{k,m}& \qquad \bar a  = (1/n^2) \sum_{\ell=1}^n \sum_{m=1}^n a_{\ell,m}
\end{align*}
so the condition in \eqref{TFn} is linear in the coefficients.

\mn
{\bf Remark.} A paper that is in preparation will show that for Birth-Death and Death-Birth updating,
strategy $k$ is favored by selection if $\phi_k(1/n, \ldots, 1/n) > 0$.

\medskip
The almost constant sum games in the previous section are an open set of games for which we can show that
in the small selection limit, the equilibrium frequencies of the spatial game converges to those of the
equilibrium frequencies of the replicator equation for the
modified game. In this situation strategy 1 will be favored by selection
if when we use \eqref{Geq} on the modified game
\beq
2\rho_1 > \rho_2 + \rho_3
\label{PTFn}
\eeq
This quantity involves our structural coefficient $\theta$ which $p_2/p_1$ for Birth-Death updating and
$$
\bar \theta = \frac{\bar p_2 - p(v_1|v_2)/\kappa}{\bar p_1}
$$
for Death-Birth updating but due to the formulas for the $\rho_i$  \eqref{Geq} the condition \eqref{PTFn} is 
quadratic in the entries in the game matrix (once we multiply each side of the equation by the denominator $D$.

\section{Analysis of three strategy cancer games} \label{sec:cancer}

\subsection{Multiple myeloma}

We begin with this example since the analysis of the original game is fairly simple and space 
can allow coexistence, which is not possible in the replicator equation for the original game.
As explained in Example \ref{bone}, the players are osteoclasts ($OC$), osteoblasts ($OB$), 
and multiple myeloma ($MM$) cells, and the payoff matrix is

\begin{center}
\begin{tabular}{lccc}
& $OC$ & $OB$ & $MM$ \\
$OC$  & 0 & $a$ & $b$\\
$OB$ & $e$ & $0$ & $-d$ \\
$MM$ & $c$ & $0$ & $0$
\end{tabular}
\end{center}

To study the properties of the game we begin with the two strategy games it contains.

\mn
{\bf 1. OC vs. OB.} $(a/(a+e),e/(a+e))$ is a mixed strategy equilibrium. Since $a,e>0$ it is
attracting (on the $OC-OB$ edge).

\mn
{\bf 2. OC vs. MM.} $(b/(b+c),c/(b+c))$ is a mixed strategy equilibrium. Since $b,c$ are
 positive it is attracting (on the $OC-OM$ edge).

\mn
{\bf 3. OB vs. MM.} $MM$ dominates $OB$.

\mn
{\bf Invadability.} As in Section \ref{sec:3sgode}, to determine the behavior of the three strategy game, we will see when the third strategy
can invade the other two in equilibrium. Here and what follows it will be convenient to also refer to strategies by number:
$1=OC$, $2=OB$, $3=MM$.

\mn
In the $OC,OB$ equilibrium, $F_1=F_2 = ae/(a+e)$ while $F_{3}=ca/(a+e)$ so $MM$ can invade 
if $\beta = c/e > 1$. 

\mn
In the $OC,MM$ equilibrium, the fitnesses $F_1=F_3=bc/(b+c)$, while $F_2 = (eb-dc)/(b+c)$,
so $OB$ can invade if $eb-dc > bc$. Letting $\delta = dc/be$, we can write this as
$$
1 - \frac{dc}{be} > \frac{c}{e} \quad\hbox{or}\quad \beta + \delta < 1
$$

\mn
Since $\delta>0$, these two conditions $\beta > 1$ and $\beta+\delta < 1$ cannot be satisfied at the same time. Let
$$
x_1 = (a/(a+e),e/(a+e),0) \qquad x_2 = (b/(b+c),0, c/(b+c))
$$
Thus we have three cases

\mn
{\bf Case 1.} $\beta > 1$, $x_1$ can be invaded, while $x_2$ cannot. This is Example 7.2A, so the replicator equation converges to $x_1$.

\mn
{\bf Case 2.} $\beta<1$, $\beta+\delta>1$: Neither $x_1$ nor $x_2$ can be invaded. This is Example 7.2B,
so there is an interior fixed point that is a saddle point, and the replicator equation exhibits bistability. We leave it to the
reader to verify that the interior equilibrium is:
$$
\rho_1 = \frac{\delta}{D} \qquad \rho_2 = \frac{\beta(\delta+\beta-1)}{D} \qquad \rho_3 = \frac{1-\beta}{D}
$$
where $D$ is the sum of the numerators.

\mn
{\bf Case 3.} $\beta+\delta<1$, $x_1$ cannot be invaded but $x_2$ can. This is Example 7.2A, so the replicator equation converges to $x_2$.

\mn
{\bf The modified game} has entries

\begin{center}
\begin{tabular}{lccc}
& $OC$ & $OB$ & $MM$ \\
$OC$  & 0 & $(1+\theta)a - \theta e$ & $(1+\theta)b - \theta c$\\
$OB$ & $(1+\theta)e-\theta a$ & $0$ & $-(1+\theta)d$ \\
$MM$ & $(1+\theta)c - \theta b$ & $\theta d$ & $0$
\end{tabular}
\end{center}

\noindent
The modification of the game does not change the sign of $G_{2,3}$ but it puts a positive entry in $G_{3,2}$.
It may also change the signs of one or two of the other four non-zero entries. Noting that
$$
G_{12}<0 \quad \hbox{if}\quad e > (1+\theta)a/\theta \quad \hbox{while} \quad
G_{21}<0 \quad \hbox{if}\quad e < \theta a/(1+\theta) 
$$
If one of these two entries is negative the other one is positive. The same holds for $G_{13}$ and $G_{31}$
To simplify formulas, we let

\begin{center}
\begin{tabular}{lccc}
& $OC$ & $OB$ & $MM$ \\
$OC$  & 0 & $A$ & $B$\\
$OB$ & $E$ & $0$ & $-D$ \\
$MM$ & $C$ & $F$ & $0$
\end{tabular}
\end{center}

\mn
We always have $D,F>0$. Suppose to begin that $A,B,C,E>0$. The condition for $OB$ to invade the $OC,MM$ equilibrium,
when written in the new notation, is unchanged
$$
EB-DC > BC
$$
In the $OC,OB$ equilibrium, $F_1=F_2 = AE/(A+E)$ while $F_{3}=CA/(A+E) + FE/(A+E)$ so $MM$ can invade if 
$$
\frac{(C-E)A}{A+E} +\frac{FE}{A+E} >0
$$  
i.e., $(C-E)A + FE > 0$ or $C/E +FE > 1$, which is no longer inconsistent with $C/E < 1 - DC/BE$, so we have
a new possibility.

\mn
{\bf Case 4.} Both $x_i$ can be invaded if
$$
1 - \frac{DC}{BE} > \frac{C}{E} > 1 - \frac{F}{A}
$$
We are in the situation of Example 7.2 so by results for that example there is coexistence in the spatial game.

\begin{center}
\begin{picture}(180,150)
\put(30,30){\line(1,0){120}}\put(30,30){\line(2,3){60}}\put(150,30){\line(-2,3){60}}
\put(80,125){$OB$}\put(0,25){$OC$}\put(155,25){$MM$}
\put(59,76){$\bullet$} \put(46,78){$x_1$}
\put(87,27){$\bullet$} \put(83,19){$x_2$}
\put(87,55){$\bullet$} 
\put(65,75){\vector(3,-2){15}}\put(90,35){\vector(0,1){15}} 
\put(40,55){\vector(2,3){10}}\put(80,115){\vector(-2,-3){10}} 
\put(45,25){\vector(1,0){20}}\put(135,25){\vector(-1,0){20}} 
\put(115,95){\vector(2,-3){20}} 
\end{picture}
\end{center}

\mn
The next thing to consider is what happens if one of the entries $A, B,C,E < 0$. Recall that $A<0$ implies $E>0$,
$E<0$ implies $A>0$, $B<0$ implies $C>0$, and  $C<0$ implies $B>0$.

\mn
{\bf Case 5A.} If $A<0$ (so $OB \gg OC$) and $x_2$ can  be invaded, then we are in the situation of  Example 7.3
and there is coexistence in the spatial game.

\mn
{\bf Case 5B.} If $E<0$ then $OC \gg OB$. This is Example 7.3C so the $OB$'s die out and $x_2$ is the limit for the replicator equation. 

\mn
{\bf Case 6A.} If $C < 0$ (so $OC \gg MM$) and $x_1$ can be invaded, then we are in the situation of  Example 7.3
and there is coexistence in the spatial game.

\mn
{\bf Case 6B.} If $B<0$ then $MM \gg OC$, and $MM \gg OB$.  If $MM$ can invade the $OC, OB$ equilibrium
then $MM$ takes over (Example 7.3A.) If not then there is an interior saddle point and bistability. (Example 7.3B.)
However, we cannot prove the conclusion in either case. 

\medskip
One can also change the signs of two entries. To avoid case 6B, where nothing changes if $A$ or $E$ is negative, we have to suppose that $C<0$.

\mn
{\bf Case 7A.} If $C<0$ and $A<0$, we get a rock-paper scissors game, as in Example 7.4.  Let
$$
\Delta = \beta_1\beta_2\beta_3+\alpha_1\alpha_2\alpha_3 = -DCA + FEB
$$
If $\Delta>0$ then solutions of the replicator equation will spiral into a fixed point, so we expect to have coexistence in the spatial game.
However since we have no result for Example 7.4, we cannot prove this. 

\mn
{\bf Case 7B.} If $C<0$ and $E<0$ then $OC \gg MM, OB$. Again we are in the situation of Example 7.4A, so $OC$ takes over in the replicator 
equation, but we cannot prove this occurs in the spatial model.

\subsection{Three species chemical competition}

As explained in Example \ref{Tom}, the payoff matrix is
$$
\begin{matrix} 
 & P & R & S \\
P & z-e-f+g & z-e & z-e+g \\
R & z-h & z-h & z-h \\
S & z-f & z & z
\end{matrix}
$$
Here P's produce a toxin, R's are resistant to it, S's are sensitive wild-type cells, and we have supposed $g>e$
 i.e., the benefit of producing the toxin outweighs the cost.
Subtracting a constant from each column to make the diagonal entries 0. 
$$
\begin{matrix} 
 & P & R & S \\
P & 0 & h-e & g-e \\
R & e+f-g-h & 0 & -h \\
S & e-g & h & 0
\end{matrix}
$$ 
To simplify the algebra we will consider a more general game

\begin{center}
\begin{tabular}{lccc}
& $P$ & $R$ & $S$ \\
$P$  & 0 & $a$ & $c$\\
$R$ & $b$ & $0$ & $-d$ \\
$S$ & $-c$ & $d$ & $0$
\end{tabular}
\end{center}

\noindent
thinking primarily about the special case
$$
\begin{matrix}
a = (1+\theta)(h-e) - \theta (e-h+f-g) & c = (1+2\theta)(g-e) \\
b = (1+\theta)(e-h+f-g) - \theta(h-e) & d = (1+2\theta) h
\end{matrix}
$$
Here $c,d>0$ since $g>e$ and $h>0$, but $a$ and $b$ can have either sign.

To investigate the game we begin by considering the two strategy subgames it contains.

\mn
{\bf R vs. S.} $d >0$ so $S$ dominates $R$.

\mn
{\bf P vs. S.} $c>0$, so $P$ dominates $S$.

\mn
{\bf P vs. R.} All four cases can occur. We now consider them one at a time.

\mn
{\bf Case 1.} If $a>0$ and $b < 0$ then $P$ dominates $R$. Since $P$ also dominates $S$, this is Example 7.4A.
$P$'s take over in the replicator equation. We conjecture but do not know how to prove that this happens in
the spatial model.

\noindent
{\bf Case 2.} If $a<0$ and $b>0$, then $R$ dominates $P$ and we have a cyclic relationship between
the competitors. We are in the situation of Example 7.4. Let
$$
\Delta = \beta_1\beta_2\beta_3+\alpha_1\alpha_2\alpha_3 = dcb - (-d)(-c)a = dc(b-a)
$$
If $\Delta>0$ solutions of the replicator equation will spiral into a fixed point, so we expect (but cannot prove) coexistence in the spatial game.

\begin{center}
\begin{picture}(180,150)
\put(20,120){Case 1.}
\put(30,30){\line(1,0){120}}\put(30,30){\line(2,3){60}}\put(150,30){\line(-2,3){60}}
\put(90,125){$P$}\put(15,25){$R$}\put(155,25){$S$} 
\put(45,65){\vector(2,3){20}} 
\put(135,65){\vector(-2,3){20}} 
\put(75,25){\vector(1,0){30}} 
\end{picture}
\begin{picture}(180,150)
\put(20,120){Case 2.}
\put(30,30){\line(1,0){120}}\put(30,30){\line(2,3){60}}\put(150,30){\line(-2,3){60}}
\put(90,125){$P$}\put(15,25){$R$}\put(155,25){$S$} 
\put(65,95){\vector(-2,-3){20}} 
\put(135,65){\vector(-2,3){20}} 
\put(75,25){\vector(1,0){30}} 
\end{picture}
\end{center}

\mn
If neither of the two strategies $P$ and $R$ dominates the other, we will have a mixed strategy equilibrium with 
$$
p_{12} = \frac{a}{a+b} \qquad q_{12} = \frac{b}{a+b}
$$
Here and in what follows $p_{ij}$ and $q_{ij}$ denote the equilibrium frequencies of the first and second strategies
in the $i,j$ subgame. Using the facts about $2\times 2$ games recalled right after \eqref{Gmse}:

\mn
{\bf Case 3.} If $a>0$ and $b>0$ then $(p_{12},q_{12})$ is attracting on the $P,R$ edge.

\mn
{\bf Case 4.} $a<0$ and $b<0$ then $(p_{12},q_{12})$ is repelling on the $P,R$ edge.

\mn
Let $x_1 = (a/a+b, b/(a+b),0)$. Each of the two cases breaks into subcases depending on whether $x_1$ can be invaded (subcase $A$) or $B$ not (subcase $B$).  

\mn
{\bf Case 3A.} The interior equilibrium will attracting and we are in the situation of Example 7.3, and there will be coexistence in the spatial game.

\mn
{\bf Case 3B.} The fixed point on the $R,P$ edge will attracting, and we are in the situation of Example 7.3D. The $S$'s should die out in the spatial game and we should have an equilibrium consisting only of $R$'s and $P$'s but we do not know how to prove that.

\begin{center}
\begin{picture}(180,150)
\put(20,120){Case 3A.}
\put(30,30){\line(1,0){120}}\put(30,30){\line(2,3){60}}\put(150,30){\line(-2,3){60}}
\put(90,125){$P$}\put(15,25){$R$}\put(155,25){$S$} 
\put(59,76){$\bullet$}
\put(35,48){\vector(2,3){15}}\put(80,115){\vector(-2,-3){15}} 
\put(135,65){\vector(-2,3){20}} 
\put(75,25){\vector(1,0){30}} 
\put(87,55){$\bullet$} \put(65,76){\vector(3,-2){20}} 
\put(28,28){$\bullet$}
\end{picture}
\begin{picture}(180,150)
\put(20,120){Case 3B.}
\put(30,30){\line(1,0){120}}\put(30,30){\line(2,3){60}}\put(150,30){\line(-2,3){60}}
\put(90,125){$P$}\put(15,25){$R$}\put(155,25){$S$} 
\put(59,76){$\bullet$}
\put(35,48){\vector(2,3){15}}\put(80,115){\vector(-2,-3){15}} 
\put(135,65){\vector(-2,3){20}} 
\put(75,25){\vector(1,0){30}} 
\put(86,62){\vector(-3,+2){21}} 
\put(28,28){$\bullet$}
\end{picture}
\end{center}

\medskip
{\bf Interior fixed point.} Our theoretical results tell us that there will be attracting interior fixed points in Cases 2 and 3A. The show that one does not exist in Case 3B, we use \eqref{Geq} to compute
\begin{align}
\rho_1 & = (dc - ad + d^2)/D = d(-a+c+d)/D 
\nonumber\\
\rho_2 & = (cb + cd + c^2)/D = c(b+c+d)/D 
\label{cceq}\\
\rho_3 & = (bd-ac-ab)/D
\nonumber
\end{align}
If $S$ cannot invade the $P,R$ equilibrium then
\beq
\frac{ab}{a+b} > (-c) \frac{a}{a+b} + d \frac{b}{a+b}
\label{3noi12}
\eeq
so the numerator of $\rho_3$ is negative. If all three numerators were negative then there would be an equilibrium 
but the numerator of $\rho_2 > 0$, so there is not.

As a check of our computation of the fixed point $\rho$, we note that when $a=h-e$, $b=f-g+e-h$, $c=g-e$, and $d=h$ we have
$$
\rho_1 = \frac{hg}{D} \qquad \rho_2 = \frac{(g-e)f}{D} \qquad \rho_3 = \frac{ef-hg}{D}
$$
so $D=gf$ and the interior fixed point is 
\beq
\rho_1 = \frac{h}{f} \qquad \rho_2 = 1 - \frac{e}{g} \qquad \rho_3 = \frac{e}{g} - \frac{h}{f}
\label{3scceq}
\eeq
in agreement with page 1496 of \cite{Tom}.

\mn
In {\bf Case 4A}, $S$ can invade the $P,R$ equilibrium, so looking at \eqref{3noi12}, we see that the numerator of $\rho_3$ is negative. The numerator of $\rho_1$ is positive so there can be no equilibrium. 

\mn
In {\bf Case 4B}, $S$ cannot invade so the numerator of $\rho_3$ is positive. Since $a<0$ and $c,d >0$ the numerator of $\rho_1$ is positive. In general we might have $b+c+d<0$ but in our special case
$$
b+c+d = (1+2\theta) g + (1+\theta) (f-g) = \theta g + (1+\theta)f
$$ 
In this case, there is an interior equilibrium, but it is unstable. Since $P$ is locally attracting, we should have convergence to $P$ in the spatial game but cannot prove it.

\begin{center}
\begin{picture}(180,150)
\put(20,120){Case 4A.}
\put(30,30){\line(1,0){120}}\put(30,30){\line(2,3){60}}\put(150,30){\line(-2,3){60}}
\put(90,125){$P$}\put(15,25){$R$}\put(155,25){$S$} 
\put(59,76){$\bullet$}
\put(53,73){\vector(-2,-3){15}}\put(65,90){\vector(2,3){15}} 
\put(135,65){\vector(-2,3){20}} 
\put(75,25){\vector(1,0){30}} 
\put(65,76){\vector(3,-2){20}} 
\put(88,118){$\bullet$}
\end{picture}
\begin{picture}(180,150)
\put(20,120){Case 4B.}
\put(30,30){\line(1,0){120}}\put(30,30){\line(2,3){60}}\put(150,30){\line(-2,3){60}}
\put(90,125){$P$}\put(15,25){$R$}\put(155,25){$S$} 
\put(59,76){$\bullet$}\put(87,55){$\bullet$} 
\put(90,65){\vector(0,1){25}} 
\put(53,73){\vector(-2,-3){15}}\put(65,90){\vector(2,3){15}} 
\put(135,65){\vector(-2,3){20}} 
\put(75,25){\vector(1,0){30}} 
\put(86,62){\vector(-3,+2){21}} 
\put(88,118){$\bullet$}
\end{picture}
\end{center}

\subsection{Glycolytic phenotype}

As indicated in Example \ref{Glyco}, the three strategies are $AG =$ autonomous growth, $INV =$ invasive, 
and $GLY =$ glycolytic phenotype, while the payoff matrix is:

\begin{center}
\begin{tabular}{cccc}
& $AG$ & $INV$ & $GLY$ \\
$AG$  & $\frac{1}{2}$ & 1 & $\frac{1}{2} - n$\\
\nass
$INV$ & $1-c$ & $1-\frac{c}{2}$ & $1-c$ \\
\nass
$GLY$ & $\frac{1}{2} + n - k$ & $1-k$ & $\frac{1}{2} - k$
\end{tabular}
\end{center}

\noindent
As noted earlier, if we subtract a constant from each column to make the diagonal zero,
this will not change the properties of the replicator equation.

\begin{center}
\begin{tabular}{cccc}
& $AG$ & $INV$ & $GLY$ \\
$AG$  & 0 & $\frac{c}{2}$ & $k - n$\\
\nass
$INV$ & $\frac{1}{2}-c$ & 0 & $\frac{1}{2}-c+k$ \\
\nass
$GLY$ & $n - k$ & $\frac{c}{2}-k$ & 0
\end{tabular}
\end{center}

\noindent
Again we simplify by generalizing

\begin{center}
\begin{tabular}{lccc}
& $AG$ & $INV$ & $GLY$ \\
$AG$  & 0 & $a$ & $d$\\
$INV$ & $b$ & $0$ & $e$ \\
$GLY$ & $-d$ & $f$ & $0$
\end{tabular}
\end{center}

\mn
where the constants are given by
$$
\begin{matrix}
a = (1+\theta)(c/2) - \theta(1/2-c) & e = (1+\theta)(1/2-c+k) - \theta(c/2-k) \\
b = (1+\theta)(1/2-c) - \theta(c/2) & f = (1+\theta)(c/2-k) - \theta(1/2-c+k) \\
d = (1+2\theta)(k-n)
\end{matrix}
$$
We will suppose $c < 1/2$ so $a,b,e$ are positive when $\theta=0$. Note that
$$
f<0 \quad\hbox{if}\quad c/2-k < 0 \quad\hbox{or} \quad c/2-k >0 \quad\hbox{and}\quad 1/2-c+k > \frac{1+\theta}{\theta}(c/2-k)
$$
so $f<0$ implies $e>0$.

\mn
{\bf 1. AG vs INV.} If $a,b>0$ there is a mixed strategy equilibrium
$$
p_{12} = \frac{b}{a+b} \qquad q_{12} = \frac{a}{a+b}.
$$
which is attracting on the $AG, INV$ edge.

\mn
{\bf 2. INV vs GLY.} Suppose for the moment that $e>0$. If $f<0$ then INV dominates GLY.
If $f>0$ then there is a fixed point
$$
p_{23} = \frac{e}{e+f} \qquad q_{23} = \frac{f}{e+f}.
$$
which is attracting on the $INV,GLY$ edge.

\mn
{\bf 3. AG vs GLY.} If $d>0$ then $AG \gg GLY$; if $d<0$ then $GLY \gg AG$.

\medskip
In the last two examples we examined all of the possible cases. In this one and the next,
we will only consider games that lead to attracting interior equilibria
In the original game there are two possibilities. 
In both we need $GLY$ to be able to invade the $AG, INV$ equilibrium.

\mn
Case 1.  $f>0$ so there is an $INV,GLY$ equilibrium, which we suppose can be invaded by $AG$. The $AG,GLY$ edge is
blank because it does not matter which one dominates the other. This corresponds to
Example 7.2 so there is coexistence in the spatial game.

\mn
Case 2. $f<0$. In this case we need $d<0$ so $GLY \gg AG$.  This corresponds to
Example 7.2 so there is coexistence in the spatial game.

\begin{center}
\begin{picture}(180,150)
\put(20,120){Case 1.}
\put(30,30){\line(1,0){120}}\put(30,30){\line(2,3){60}}\put(150,30){\line(-2,3){60}}
\put(80,125){$AG$}\put(0,25){$INV$}\put(155,25){$GLY$}
\put(59,76){$\bullet$}\put(87,27){$\bullet$}\put(87,55){$\bullet$} 
\put(65,75){\vector(3,-2){15}}\put(90,35){\vector(0,1){15}} 
\put(35,48){\vector(2,3){15}}\put(80,115){\vector(-2,-3){15}} 
\put(45,25){\vector(1,0){30}}\put(135,25){\vector(-1,0){30}} 
\end{picture}
\qquad
\begin{picture}(180,150)
\put(20,120){Case 2.}
\put(30,30){\line(1,0){120}}\put(30,30){\line(2,3){60}}\put(150,30){\line(-2,3){60}}
\put(80,125){$AG$}\put(0,25){$INV$}\put(155,25){$GLY$}
\put(59,76){$\bullet$}\put(87,55){$\bullet$}
\put(65,75){\vector(3,-2){15}} 
\put(35,48){\vector(2,3){15}}\put(80,115){\vector(-2,-3){15}} 
\put(105,25){\vector(-1,0){30}} 
\put(115,95){\vector(2,-3){20}} 
\end{picture}
\end{center}

There are other possibilities for coexistence in the spatial game. 

\mn
Case 3A. If $a<0$ (and hence $b>0$) the $INV \gg AG$. In this case if $d<0$, $AG$ dominates $GLY$
and we are in the case of Example 7.3.

\mn
Case 3B. If $b<0$ (and hence $a>0$) then $AG \gg INV$. In this case if $d>0$, $GLY$ dominates $AG$
and we are in the case of Example 7.3.

\mn
Case 4. If $f<0$ (and hence $e>0$) then $INV \gg GLY$, if $b<0$ (and hence $a>0$) then $AG \gg INV$.
If $d>0$  then $GLY \gg AG$. So under these choices of sign, we have a rock-paper scissors relationship,
as in Example 7.4.  Let
$$
\Delta = \beta_1\beta_2\beta_3+\alpha_1\alpha_2\alpha_3 = fdb - e(-d)a = d(fb+ea)
$$
If $\Delta>0$, which is always true under these choices of sign,  solutions of the replicator equation will spiral 
into a fixed point, so we will have coexistence in the spatial game.

\subsection{Tumor-stroma interactions} 

As explained in Example \ref{Stroma} there are
three strategies $S =$ stromal cells, $I =$ cells that have become independent of the micro-environment,
and $D = $ cells that remain dependent on the microenviornmnet.
The payoff matrix is:

\begin{center}
\begin{tabular}{lccc}
& $S$ & $D$ & $I$ \\
$S$ & 0 & $\alpha$ & 0 \\
$D$ & $1+\alpha-\beta$ & $1-2\beta$ & $1-\beta + \rho$ \\
\nass
$I$ & $1 - \gamma$  & $1 - \gamma$ & $1 - \gamma$
\end{tabular}
\end{center}

\mn
Recall that we assume $\beta<1$, $\gamma<1$.
Subtracting a constant from the second and third columns to make the diagonals 0 the game becomes

\begin{center}
\begin{tabular}{lccc}
& $S$ & $D$ & $I$ \\
$S$ & 0 & $\alpha+2\beta-1$ & $\gamma-1$ \\
$D$ & $1+\alpha-\beta$ & 0 & $\gamma+ \rho -\beta$ \\
\nass
$I$ & $1 - \gamma$  & $2\beta - \gamma$ & 0
\end{tabular}
\end{center}

\mn
Generalizing we have

\begin{center}
\begin{tabular}{lccc}
& $S$ & $D$ & $I$ \\
$S$ & 0 & $a$ & $-c$ \\
$D$ & $b$ & 0 & $d$ \\
\nass
$I$ & $c$  & $e$ & 0
\end{tabular}
\end{center}

\noindent
where the constants are
$$
\begin{matrix}
a = (1+\theta)(\alpha+2\beta-1) - \theta(1+\alpha-\beta) & d= (1+\theta)(\gamma+\rho-\beta) - \theta(2\beta-\gamma) \\
b= (1+\theta)(1+\alpha-\beta) - \theta)(\alpha+2\beta-1) & e = (1+\theta)(2\beta-\gamma) - \theta(\gamma+\rho+\beta) \\
c = (1+2\theta)(1-\gamma)
\end{matrix}
$$
We assume $\beta<1$  so $a,b>0$ when $\theta=0$, and there will an attracting fixed point (a.f.p.) on the
$S,D$ edge for any $\theta\ge 0$, $c>0$. We assume $\gamma<1$ so $I \gg S$. Almost anything is possible for
the signs of $d$ and $e$. When $\theta=0$ we have
$$
\begin{matrix}
d<0, e>0  & \gamma < \beta-\rho  &  I \gg S\\
d>0, e>0 & \beta-\rho < \gamma < 2\beta & I,S \hbox{ a.f.p.} \\
d>0, e<0 & \gamma > 2\beta & S \gg I
\end{matrix}
$$
There are a large number of possible cases, so we only consider the ones that lead to coexistence.

\mn
{\bf Case 1.} If the $S,D$ and $D,I$ fixed points exist and can be invaded then we are in the situation 
of Example 7.2 and there will be coexistence in the spatial model.

\mn
{\bf Case 2.} If $D \gg I$ and I can invade the $S,D$ fixed point, then we are in the situation of
Example 7.3 and there will be coexistence in the spatial model.

\begin{center}
\begin{picture}(180,150)
\put(20,120){Case 1.}
\put(30,30){\line(1,0){120}}\put(30,30){\line(2,3){60}}\put(150,30){\line(-2,3){60}}
\put(88,125){$S$}\put(15,25){$D$}\put(155,25){$I$}
\put(59,76){$\bullet$}\put(87,27){$\bullet$}\put(87,55){$\bullet$} 
\put(65,75){\vector(3,-2){15}}\put(90,35){\vector(0,1){15}}
\put(115,95){\vector(2,-3){20}} 
\put(80,115){\vector(-2,-3){15}}\put(35,48){\vector(2,3){15}} 
\put(45,25){\vector(1,0){30}}\put(135,25){\vector(-1,0){30}}
\end{picture}
\qquad
\begin{picture}(180,150)
\put(20,120){Case 2.}
\put(30,30){\line(1,0){120}}\put(30,30){\line(2,3){60}}\put(150,30){\line(-2,3){60}}
\put(88,125){$S$}\put(15,25){$D$}\put(155,25){$I$}
\put(59,76){$\bullet$}\put(87,55){$\bullet$}
\put(65,75){\vector(3,-2){15}} 
\put(35,48){\vector(2,3){15}}\put(80,115){\vector(-2,-3){15}} 
\put(105,25){\vector(-1,0){30}} 
\put(115,95){\vector(2,-3){20}} 
\end{picture}
\end{center}

\mn
{\bf Case 3.} If $\theta>0$ we can have $S \gg D$. If the $D,I$ equilibrium exists and can be invaded
then we are in the situation of Example 7.3 and there will be coexistence in the spatial model.

\section{Voter model duality: details} \label{sec:vmd}

In the degenerate case of an evolutionary game in which $G_{i,j} \equiv 1$, the system reduces to the voter model which was introduced in the mid 1970s independently by Clifford and Sudbury \cite{CS} and Holley and Liggett \cite{HL} on the $d$-dimensional integer lattice. It is a very simple model for the spread of an opinion and has been investigated in great detail, see Liggett \cite{L99} for a survey. To define the model we let $p(y)$ be an irreducible probability kernel $p$ on $\ZZ^d$ that is finite range, symmetric $p(y)=p(-y)$, and has covariance matrix $\sigma^2I$. 

In the voter model, each site $x$ has an opinion $\xi_t(x)$ and at the times of a rate 1 Poisson process decides to change its opinion, imitating the voter at $x+y$ with probability $p(y)$. To study the voter model, it is useful to give an explicit construction called the graphical representation, see Harris \cite{H76} and Griffeath \cite{G78}. For each $x \in \ZZ^d$ and $y$ with $p(y)>0$ let $T^{x,y}_n$, $n\ge 1$ be the arrival times of a Poisson process with rate $p(y)$. At the times $T^{x,y}_n$, $n \ge 1$, $x$ decides to change its opinion to match the one at $x+y$.  To indicate this, we draw an arrow from $(x,T^{x,y}_n)$ to $(x+y,T^{x,y}_n)$.

\begin{center}
\begin{picture}(320,220)
\put(30,30){\line(1,0){260}}
\put(30,180){\line(1,0){260}}
\put(40,30){\line(0,1){150}}
\put(80,30){\line(0,1){150}}
\put(120,30){\line(0,1){150}}
\put(160,30){\line(0,1){150}}
\put(200,30){\line(0,1){150}}
\put(240,30){\line(0,1){150}}
\put(280,30){\line(0,1){150}}
\put(37,18){0}
\put(77,18){0}
\put(117,18){0}
\put(157,18){1}
\put(197,18){0}
\put(237,18){1}
\put(277,18){0}
\put(37,185){1}
\put(77,185){1}
\put(117,185){1}
\put(157,185){1}
\put(197,185){1}
\put(237,185){1}
\put(277,185){0}
\put(20,27){0}
\put(20,177){$t$}
\put(120,160){\line(1,0){40}}
\put(118,157){$<$}
\put(200,145){\line(1,0){40}}
\put(198,142){$<$}
\put(80,130){\line(1,0){40}}
\put(78,127){$<$}
\put(160,110){\line(1,0){40}}
\put(158,107){$<$}
\put(40,90){\line(1,0){40}}
\put(73,87){$>$}
\put(80,75){\line(1,0){40}}
\put(113,72){$>$}
\put(240,60){\line(1,0){40}}
\put(273,57){$>$}
\put(120,45){\line(1,0){40}}
\put(153,42){$>$}
\linethickness{1.0mm}
\put(120,180){\line(0,-1){50}}
\put(80,130){\line(0,-1){55}}
\put(120,75){\line(0,-1){30}}
\put(240,180){\line(0,-1){35}}
\put(200,145){\line(0,-1){35}}
\put(160,110){\line(0,-1){80}}
\end{picture}
\end{center}

To define the dual process $\zeta^{x,t}_s$, which we think of as the motion of a particle, we start with $\zeta^{x,t}_0 = x$. To define the dynamics we start at $x$ at time $t$ and work down the graphical representation. The process stays at $x$ until the first time $t-r$ that it encounters the tail of an arrow $x$. At this time, the particle jumps to the site $x+y$ at the head of the arrow, i.e., $\zeta^{x,t}_r = x+y$. The particle stays at $x+y$ until the next time the tail of an arrow is encountered and then jumps to the head of the arrow etc. The definition of the dual guarantees that if $\xi_t(x)$ is the opinion of $x$ at time $t$ in the voter model then we have the result called \eqref{dualeq} in the introduction 
$$
\xi_t(x) = \xi_{t-s}(\zeta^{x,t}_s)
$$
For fixed $x$ and $t$, $\zeta^{x,t}_s$ is a random walk that jumps at rate 1 and by an amount with distribution $p(y)$. It should be clear from the definition that if $\zeta^{x,t}_s = \zeta^{y,t}_s$ for some $s$ then the two random walks will stay together at later times. For these reasons the $\zeta^{x,t}_s$ are called coalescing random walks. 

Having a dual random walk starting from $x$ for each $t$ is useful for the derivation of \eqref{dualeq}. However for some computations it is more convenient to have a single set valued dual process $\zeta^B_t$ that starts with a particle at each point of $B$, particles move as independent random walks until they hit, at which time they coalesce to 1. Extending the reasoning that lead to \eqref{dualeq}
\beq
P( \xi_t(x) = 1 \hbox{ for all $x\in B$}) = P( \xi_0(y) = 1  \hbox{ for all $y\in \zeta^B_t$})
\label{dual2}
\eeq

Holley and Liggett (1975) have shown 

\begin{theorem}
In $d\le 2$ the voter model approaches complete consensus, i.e., $P( \xi_t(x) = \xi_t(y) ) \to 1$. In $d \ge 3$ if we start from product measure with density $u$ (i.e., we assign opinions 1 and 0 independently to sites with probabilities $u$ and $1-u$) then as $t\to\infty$, $\xi^u_t$ converges in distribution to $\xi^u_\infty$, a stationary distribution, $\nu_u$, in which a fraction $u$ of the sites have opinion 1. 
\end{theorem}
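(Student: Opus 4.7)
The plan is to exploit the coalescing random walk duality relation \eqref{dual2} and reduce both parts of the theorem to properties of the set valued dual $\zeta^B_t$. The dichotomy between the two cases reflects the classical fact that two independent random walks with jump kernel $p$ meet almost surely in dimensions $d\le 2$ and with probability strictly less than one in $d\ge 3$.

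For $d\le 2$, given $x\ne y$ I would apply duality to the two point set $B=\{x,y\}$. The event $\xi_t(x)\ne \xi_t(y)$ requires the two coalescing walks in $\zeta^B_t$ to still be distinct at time $t$, so
$$P(\xi_t(x)\ne \xi_t(y)) \le P(\tau_{x,y} > t),$$
where $\tau_{x,y}$ is the coalescence time. Until they coalesce the two walks move independently, so their difference is a mean-zero random walk with finite range symmetric increment (the convolution of $p$ with itself through reflection). In $d\le 2$ this difference walk is recurrent, which gives $\tau_{x,y}<\infty$ a.s.\ and hence $P(\tau_{x,y}>t)\to 0$.

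For $d\ge 3$, fix a finite set $B\subset \ZZ^d$. Since particles in $\zeta^B_t$ can only coalesce, $|\zeta^B_t|$ is a nonincreasing integer valued process bounded by $|B|$ and therefore converges a.s.\ to a limit $|\zeta^B_\infty|$. With $\xi_0$ a product measure of density $u$, the duality identity \eqref{dual2} specializes to
$$P(\xi^u_t(x)=1\text{ for all }x\in B)=E\bigl[u^{|\zeta^B_t|}\bigr],$$
and bounded convergence gives a limit $E[u^{|\zeta^B_\infty|}]$. By inclusion–exclusion every finite dimensional cylinder probability for $\xi^u_t$ is a linear combination of such expectations, so all finite dimensional distributions converge and $\xi^u_t$ converges in distribution to some measure $\nu_u$. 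Translation invariance of $p$ and of the product initial law makes each $\xi^u_t$, and hence $\nu_u$, translation invariant; and since $|\zeta^{\{x\}}_t|\equiv 1$, the density is $\nu_u(\xi(0)=1)=u$. Stationarity is then automatic: starting the voter model from $\nu_u$ and running for time $s$ is the same as taking $t\to\infty$ in $\xi^u_{t+s}$, which also converges to $\nu_u$.

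The only nontrivial point is showing that in $d\ge 3$ the limit $\nu_u$ is not simply a mixture of the two trivial fixed points $\delta_{\equiv 0}$ and $\delta_{\equiv 1}$; equivalently, that $P(|\zeta^{\{x,y\}}_\infty|=2)>0$ for some pair $x\ne y$. This is exactly where transience enters: two independent random walks with kernel $p$ have strictly positive probability of never meeting in $d\ge 3$, so with positive probability the two walks in $\zeta^{\{x,y\}}_t$ never coalesce. Because the limiting one site marginal is $u$ and the two site event $\{\xi(x)=\xi(y)=1\}$ has probability $u\cdot 1+u^2\cdot(1-q)$ where $q=P(|\zeta^{\{x,y\}}_\infty|=1)<1$, the resulting $\nu_u$ is genuinely nondegenerate, completing the proof.
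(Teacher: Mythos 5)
Your proof follows essentially the same route as the paper's: duality with coalescing random walks, recurrence of the difference walk in $d\le 2$ to kill disagreements, and in $d\ge 3$ the monotonicity of $|\zeta^B_t|$ together with $P(\xi^u_t\equiv 1 \hbox{ on } B)=E\bigl[u^{|\zeta^B_t|}\bigr]$ to obtain convergence of the finite-dimensional distributions and hence a stationary limit. (Only a minor slip in your optional concluding nondegeneracy remark: the two-site probability should be $qu+(1-q)u^2$ rather than $u\cdot 1+u^2(1-q)$; this does not affect the argument.)
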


\begin{proof}
We will give the proof here since it is short and we think it will be instructive for readers not familiar with duality.
Our assumptions imply that $p(y)$ has mean 0 and finite variance. Well known results for random walk imply that in $d \le 2$ the associated random walk is recurrent. This implies that two independent random walks will eventually meet and hence the probability of disagreement between two sites in the voter model
$$
P( \xi_t(x) \neq \xi_t(y)) \le P( \zeta^{x,t}_t \neq \zeta^{y,t}_t ) \to 0
$$
To prove the second result it suffices to show that $\xi^u_t$ converges in distribution to  a limit $\xi^u_\infty$, for then standard results for Markov processes will imply that the distribution of $\xi^u_\infty$ is stationary. To do this, we use \eqref{dual2} to conclude
$$
P( \xi^u_t(x) = 1 \hbox{ for all $x\in B$}) = E\left(u^{|\zeta^B_t|}\right)
$$ 
$t \to |\zeta^B_t|$ is decreasing, and $u^{|\zeta^B_t|} \le 1$, so $P( \xi_t(x) = 1 \hbox{ for all $x\in B$})$ converges to a limit $\phi(B)$ for all finite sets $B$. Since the probabilities $\phi(B)$ determine the distribution
of $\xi^u_\infty$, the proof is complete.
\end{proof}

\section{Proofs of the coalescence identities} \label{sec:coalid}

Let $v_1, v_2, v_3$ be independent and have distribution $p$. In this section we prove
the coalescence identities stated in Section \ref{sec:vmsi} 

\begin{lemma} \label{L2}
$p(0|v_1+v_2) = p(0|v_1) = p(v_1|v_2)$
\end{lemma}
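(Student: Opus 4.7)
The plan rests on studying the difference process $D_t = X_t - Y_t$, where $X_t, Y_t$ are the two independent continuous-time random walks (each jumping at rate 1 with kernel $p$). Since $p$ is symmetric, $D_t$ is itself a continuous-time random walk with jump kernel $p$, only now running at rate $2$. Setting $W(x) = p(0\mid x)$, the event that the two walkers never meet is exactly $\{D_t \neq 0 \text{ for all } t\ge 0\}$ starting from $D_0 = x$. Two facts will drive the proof: (i) $W(0) = 0$, because the walks have already met, and (ii) conditioning on the first jump of $D$ gives the one-step identity
\[
W(x) \;=\; \sum_y p(y)\,W(x+y) \qquad \text{for every } x\neq 0,
\]
where the term $y=-x$ on the right contributes $p(-x)W(0)=0$.

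For the identity $p(0\mid v_1) = p(0\mid v_1+v_2)$, I would condition on $v_1$ in the second quantity:
\[
p(0\mid v_1+v_2) \;=\; E\bigl[W(v_1+v_2)\bigr] \;=\; E\!\left[\sum_y p(y) W(v_1+y)\right].
\]
On the event $\{v_1 \neq 0\}$ the inner sum equals $W(v_1)$ by the one-step identity above; on the event $\{v_1 = 0\}$ it equals $\sum_y p(y)W(y) = E[W(v_1)]$. Because the paper's standing assumption gives $p(0) = 0$, the set $\{v_1=0\}$ has probability zero and only the first case contributes, yielding $E[W(v_1+v_2)] = E[W(v_1)] = p(0\mid v_1)$. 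This is the main (and essentially only) subtle step, and the hypothesis $p(0)=0$ is doing real work here.

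For the identity $p(v_1\mid v_2) = p(0\mid v_1+v_2)$, I would use translation invariance to write $p(x\mid y) = W(y-x)$, so
\[
p(v_1\mid v_2) \;=\; E\bigl[W(v_2 - v_1)\bigr].
\]
Since $p$ is symmetric, $-v_1$ has the same distribution as $v_1$, so $v_2 - v_1$ and $v_1 + v_2$ are equal in distribution (both are $p\ast p$), giving $E[W(v_2-v_1)] = E[W(v_1+v_2)] = p(0\mid v_1+v_2)$. Combining the two steps yields the full chain of equalities.
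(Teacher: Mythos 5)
Your proposal is correct and follows essentially the same route as the paper: the paper also reduces to the single "difference" walk (its $h(x)=P_x(S_n\neq 0\ \forall n)$ is your $W$), obtains $p(0|v_1)=p(0|v_1+v_2)$ by first-step analysis, and gets $p(v_1|v_2)=p(0|v_1+v_2)$ from $p(x|y)=h(y-x)$ together with the symmetry $v_2-v_1\overset{d}{=}v_1+v_2$. Your write-up just makes explicit two points the paper leaves implicit, namely the one-step identity $W(x)=\sum_y p(y)W(x+y)$ for $x\neq 0$ with $W(0)=0$, and the role of the hypothesis $p(0)=0$ in ruling out the $v_1=0$ term.
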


\begin{proof} Let $S_n$ be the discrete time random walk with jumps distributed according to $p$ and let 
$$
h(x) = P_x (S_n \neq 0 \hbox{ for all $n \ge 0$} ) 
$$
be the probability the walk never returns to 0 starting from $x$. By considering what happens on
the first step $h(v_1) = h(v_1+v_2)$. Since $v_1$ and $-v_1$ have the same distribution
$h(v_1+v_2)=h(v_2-v_1)$. The two results follow since $p(x|y) = h(y-x)$. 
\end{proof}

\begin{lemma} \label{L3}
$p(v_1|v_2+v_3) = ( 1 + 1/\kappa) p(0|v_1)$
\end{lemma}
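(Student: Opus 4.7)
The plan is to interpret $p(v_1|v_2+v_3)$ as the escape probability of a single difference random walk, exploit the harmonicity of this escape probability off the origin, and then peel off one step at a time until we reduce to $p(0|v_1)$.

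First, introduce the function
$$
h(z) = P_z(S_n \neq 0 \text{ for all } n \ge 1),
$$
where $S_n$ is the discrete-time random walk with jump law $p$. Since the difference of two independent $p$-walks is itself a walk with jump law $p$ (using $p(x)=p(-x)$ and that only the hitting of $0$ is relevant), we have $p(x|y) = h(y-x)$, with $h(0)=0$ under our convention. By $p(0)=0$ and conditioning on the first step, $h$ is discrete harmonic off the origin:
$$
h(z) = \sum_{y} p(y)\, h(z+y) \quad \text{for } z \neq 0.
$$
Using the symmetry $v_1 \stackrel{d}{=} -v_1$, we also have $v_2+v_3-v_1 \stackrel{d}{=} v_1+v_2+v_3 = S_3$, so
$$
p(v_1|v_2+v_3) = E[h(S_3)].
$$

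Next, condition on $S_2$. On the event $\{S_2 \neq 0\}$, harmonicity gives $E[h(S_3)\mid S_2] = h(S_2)$. On $\{S_2 = 0\}$, the remaining step $X_3$ has law $p$ so $E[h(S_3)\mid S_2=0] = E[h(v_1)] = p(0|v_1)$. Combining, and using $h(0)=0$,
$$
E[h(S_3)] \;=\; E[h(S_2)\,1_{\{S_2 \neq 0\}}] + P(S_2=0)\, p(0|v_1) \;=\; E[h(S_2)] + P(S_2=0)\, p(0|v_1).
$$
Repeating the argument one level down (conditioning on $S_1$, using $P(S_1=0)=p(0)=0$) yields
$$
E[h(S_2)] = E[h(S_1)] = E[h(v_1)] = p(0|v_1).
$$
Finally, $P(S_2=0) = \sum_x p(x)p(-x) = 1/\kappa$ by the definition \eqref{enn}. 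Assembling the pieces gives
$$
p(v_1|v_2+v_3) = \left(1 + \tfrac{1}{\kappa}\right) p(0|v_1),
$$
as claimed.

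There is no real obstacle here; the only thing to be careful about is the boundary convention $h(0)=0$ (so that harmonicity off the origin does not overstate $E[h(S_{n+1})]$ on the event $\{S_n=0\}$), and the observation that $p(0)=0$ is what makes the one-step reduction from $S_2$ to $S_1$ lose no correction term, while the two-step return probability $P(S_2=0) = 1/\kappa$ is precisely what produces the factor $1+1/\kappa$.
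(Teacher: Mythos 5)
Your proof is correct and is essentially the paper's argument: both isolate the correction term as the probability $1/\kappa$ that the difference walk sits at the origin at the intermediate step, multiplied by the escape probability $p(0|v_1)$ from there, together with $p(v_1|v_2)=p(0|v_1)$; the paper merely phrases the same computation as a direct event decomposition for the walk started at $-v_1$ (comparing ``avoid $0$ from time $1$'' with ``avoid $0$ from time $2$'') rather than as your harmonicity/telescoping argument for $E[h(S_3)]$. One cosmetic point: your displayed definition $h(z)=P_z(S_n\neq 0 \hbox{ for all } n\ge 1)$ would give $h(0)=p(0|v_1)$ rather than $0$; since you explicitly impose $h(0)=0$ (i.e., the ``$n\ge 0$'' convention, which is exactly what makes $p(x|y)=h(y-x)$ and the $\{S_2=0\}$ bookkeeping work), the argument stands, but the definition should be stated with $n\ge 0$.
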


\begin{proof} Starting at $S_0=-v_1$, $S_1 = v_2-v_1$ and $S_2 = v_2 + v_3 - v_1$. Since $P_{-v_1}(S_1=0) = 1/\kappa$.
\begin{align*}
h(v_2+v_3 - v_1) - h(v_1) & = P_{-v_1}( S_1 = 0, S_n \neq 0 \hbox{ for $n \ge 2$}) \\
& = (1/\kappa) P_0(  S_n \neq 0 \hbox{ for $n \ge 1$}) = (1/\kappa) h(v_3) = (1/\kappa) p(0|v_1)
\end{align*}
which proves the desired result   
\end{proof}

From the two particle identities we easily get some for three particles.
The starting point is to note that considering the possibilities for
$y$ when $x$ and $z$ don't coalesce we have a result we earlier called \eqref{3to2}
$$
p(x|z)  = p(x|y|z) + p(x,y|z) +p(x|y,z) 
$$
Combining this identity with one for another pair that shares a site
in common leads to identities that relate the three ways three particles 
can coalesce to give two.

\begin{lemma}
$p(0,v_1|v_1+v_2) = p(0,v_1+v_2|v_1) =  p(v_1,v_1+v_2|0)$ and hence
$$
p(0|v_1) = 2p(x,y|z) + p(0|v_1|v_1+v_2)
$$
where $x,y,z$ is any ordering of $0, v_1, v_1+v_2$.
\end{lemma}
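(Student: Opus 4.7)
The plan is to exploit the translation invariance of coalescing random walks together with the symmetry of the kernel $p$. Any three-particle coalescence probability $p(X_1,X_2|X_3)$ depends only on the joint distribution of the pairwise differences $X_2-X_1$, $X_3-X_1$, $X_3-X_2$, since I can translate the initial configuration by any constant without changing the dynamics. For the three configurations in the lemma, the multisets of pairwise differences are:
\begin{align*}
\{0,v_1,v_1+v_2\}: &\quad v_1,\ v_1+v_2,\ v_2 \\
\{0,v_1,v_1+v_2\} \text{ (re-ordered)}: &\quad v_1+v_2,\ v_1,\ -v_2 \\
\{0,v_1,v_1+v_2\} \text{ (re-ordered)}: &\quad v_2,\ -v_1,\ -v_1-v_2
\end{align*}
In every case, up to signs the three differences are $v_1$, $v_2$, $v_1+v_2$. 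Since $v_1,v_2$ are i.i.d.\ and $p$ is symmetric ($v_i \stackrel{d}{=} -v_i$), the joint laws of these triples of differences are identical. This yields the threefold equality
$$p(0,v_1|v_1+v_2) = p(0,v_1+v_2|v_1) = p(v_1,v_1+v_2|0).$$

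For the consequence, I would apply the general identity \eqref{3to2} with $(x,y,z) = (0,v_1,v_1+v_2)$ to get
$$p(0|v_1+v_2) = p(0,v_1|v_1+v_2) + p(0|v_1,v_1+v_2) + p(0|v_1|v_1+v_2).$$
By Lemma \ref{L2}, the left-hand side equals $p(0|v_1)$. The event $\{p(0|v_1,v_1+v_2)\}$, in which $v_1$ and $v_1+v_2$ coalesce while neither hits $0$, is literally the same event as $\{p(v_1,v_1+v_2|0)\}$, so by the threefold equality it equals $p(0,v_1|v_1+v_2)$. Substituting gives
$$p(0|v_1) = 2\,p(0,v_1|v_1+v_2) + p(0|v_1|v_1+v_2),$$
and since the three two-cluster probabilities coincide, the coefficient $p(x,y|z)$ is unambiguous for any ordering of $\{0,v_1,v_1+v_2\}$.

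No real obstacle is expected: the entire argument reduces to a symmetry/relabeling observation plus one application of \eqref{3to2}. The only point that requires care is making explicit that the notation $p(a,b|c)$ and $p(c|a,b)$ describe the same event (the cluster structure is unordered), so that the ``middle term'' in \eqref{3to2} can be identified with the first term.
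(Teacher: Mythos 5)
There is a genuine gap in your proof of the threefold equality. A pair-coalescence probability such as $p(X_1,X_2|X_3)$ is \emph{not} a function of the unordered multiset of pairwise differences of the configuration: it also depends on which pair is designated as the coalescing pair. For a fixed configuration of three points, the three quantities $F(A,B;C)$, $F(A,C;B)$, $F(B,C;A)$ (``this pair coalesces, that particle stays separate'') are in general all different, even though the multiset of differences is the same for each. Your symmetry/relabeling argument does correctly give $p(0,v_1|v_1+v_2)=p(v_1,v_1+v_2|0)$: in both cases the coalescing pair is separated by a single $p$-step and the spectator is the far endpoint, and the map $x\mapsto v_1+v_2-x$ together with the relabeling $v_1\leftrightarrow v_2$ (using that the $v_i$ are i.i.d.\ and symmetric) exchanges the two situations. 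But it does \emph{not} give the middle quantity $p(0,v_1+v_2|v_1)$: there the designated coalescing pair is $\{0,v_1+v_2\}$, separated by a \emph{two-step} displacement, with the spectator $v_1$ sitting one step from each endpoint. No translation, reflection, or relabeling of i.i.d.\ variables maps this designation onto the other two, so the equality cannot follow from symmetry alone. Indeed, given \eqref{3to2}, the equality $p(0,v_1+v_2|v_1)=p(0,v_1|v_1+v_2)$ is \emph{equivalent} to the nontrivial harmonicity identity $p(0|v_1+v_2)=p(0|v_1)$ of Lemma \ref{Ted2}, which is a first-step/martingale fact about the return probability $h$, not a combinatorial symmetry; a purely symmetric argument that never invokes it cannot produce this conclusion.

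The paper closes exactly this hole by writing \eqref{3to2} three times, once for each choice of the non-coalescing pair in $\{0,v_1,v_1+v_2\}$, obtaining three linear equations in the three unknowns $p(0,v_1|v_1+v_2)$, $p(0,v_1+v_2|v_1)$, $p(v_1,v_1+v_2|0)$ whose left-hand sides are $p(0|v_1)$, $p(0|v_1+v_2)$, $p(v_1|v_1+v_2)$. Lemma \ref{Ted2} and translation invariance show these left-hand sides coincide, and subtracting the equations pairwise yields the threefold equality; the displayed identity then follows from any one of the three equations. Your derivation of the display from the threefold equality is fine (and your observation that $p(0|v_1,v_1+v_2)$ and $p(v_1,v_1+v_2|0)$ denote the same event is correct), but you should replace the symmetry argument for the first assertion with the linear-system argument, or at least supply a separate proof that $p(0,v_1+v_2|v_1)$ agrees with the other two.
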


\begin{proof}
Using (\ref{3to2})
\begin{align*}
p(0|v_1) & = p(0|v_1|v_1+v_2) + p(v_1, v_1+v_2|0) + p(0,v_1+v_2|v_1) \\
p(0|v_1+v_2) & = p(0|v_1|v_1+v_2) + p(v_1,v_1+v_2|0) +p(0,v_1|v_1+v_2)  \\
p(v_1|v_1+v_2) & = p(0|v_1|v_1+v_2)  + p(0,v_1+v_2|v_1) + p(0,v_1|v_1+v_2)
\end{align*}
Since $p(0|v_1+v_2) = p(0|v_1)$ by Lemma \ref{Ted2} the first result follows.
Translation invariance implies $p(0|v_1) = p(0|v_2) = p(v_1|v_1+v_2)$ so comparing the 
first and third lines gives the second result. 
The displayed identity follows from the first two and first and third in the proof.
\end{proof}

\begin{lemma} 
$ p(v_1,v_2|v_2+v_3) = p(v_2,v_2+v_3|v_1) = p(v_1,v_2+v_3|v_2) + (1/\kappa)p(0|v_1)$ and hence
\begin{align*}
p(v_1|v_2) (1+1/\kappa) & = 2 p(v_2,v_2+v_3|v_1) + p(v_1|v_2|v_2+v_3) \\
&= 2 p(v_1,v_2|v_2+v_3) + p(v_1|v_2|v_2+v_3) \\
p(v_1|v_2) (1-1/\kappa) & = 2 p(v_1,v_2+v_3|v_2) + p(v_1|v_2|v_2+v_3)
\end{align*}
\end{lemma}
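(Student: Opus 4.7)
The plan is to mimic the proof of the previous lemma by applying the three-to-two decomposition \eqref{3to2} to the three walks starting from $v_1$, $v_2$, and $v_2+v_3$, taking each of the three possible choices of ``distinguished pair'' in turn. Writing $P = p(v_1|v_2)$, $p_1 = p(v_1|v_2|v_2+v_3)$, $A = p(v_1,v_2|v_2+v_3)$, $B = p(v_1,v_2+v_3|v_2)$, and $C = p(v_1|v_2,v_2+v_3)$, the three applications of \eqref{3to2} yield
\begin{align*}
p(v_1|v_2+v_3) &= A + C + p_1, \\
p(v_1|v_2)     &= B + C + p_1, \\
p(v_2|v_2+v_3) &= A + B + p_1.
\end{align*}

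First I would simplify the three left-hand sides using the two-particle identities already established. By Lemma \ref{L3}, $p(v_1|v_2+v_3) = (1+1/\kappa)\,p(0|v_1) = (1+1/\kappa)P$, and by Lemma \ref{L2} together with translation invariance, $p(v_2|v_2+v_3) = p(0|v_3) = p(0|v_1) = P$. The system then becomes three linear equations in the three unknowns $A$, $B$, $C$ whose coefficients depend only on $P$, $p_1$, and $\kappa$.

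Solving this linear system gives $A = C = \frac{1}{2}\bigl((1+1/\kappa)P - p_1\bigr)$ and $B = \frac{1}{2}\bigl((1-1/\kappa)P - p_1\bigr)$. The equality $A = C$ is exactly the first claimed identity $p(v_1,v_2|v_2+v_3) = p(v_2,v_2+v_3|v_1)$, while subtracting the second equation from the first yields $A - B = P/\kappa$, which rearranges to the second claimed identity $A = B + (1/\kappa)\,p(0|v_1)$. The three displayed identities of the form $p(v_1|v_2)(1\pm 1/\kappa) = 2X + p_1$ then follow by direct rearrangement of the closed-form expressions for $A$, $B$, and $C$.

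The one potentially non-obvious point is the symmetry $A = C$: after translating by $-v_2$ so the three positions become $\{v_1-v_2,\ 0,\ v_3\}$, the outer sites $v_1-v_2$ and $v_3$ have different laws ($p*p$ versus $p$), so this equality is not a direct consequence of relabeling the walkers. Hence the main work lies in setting up the three instances of \eqref{3to2} correctly so that $A = C$ falls out as an algebraic consequence of the linear system rather than needing a symmetry argument. Once the three decompositions are written down and their left-hand sides simplified via Lemmas \ref{L2}--\ref{L3}, everything else is routine linear algebra.
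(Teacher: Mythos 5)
Your proposal is correct and follows essentially the same route as the paper: three applications of \eqref{3to2} to the triple $v_1,v_2,v_2+v_3$, simplification of the left-hand sides via Lemmas \ref{L2} and \ref{L3}, and then linear algebra (the paper compares and subtracts the equations rather than writing closed forms for $A,B,C$, but the computation is the same). Your closing observation that $A=C$ is an algebraic consequence rather than a relabeling symmetry is accurate and consistent with how the paper obtains it.
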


\begin{proof}
Using (\ref{3to2})
\begin{align*}
p(v_1|v_2)     & =  p(v_1|v_2|v_2+v_3) + p(v_1, v_2+v_3 | v_2) + p(v_2, v_2 + v_3|v_1) \\
p(v_2|v_2+v_3) & =  p(v_1|v_2|v_2+v_3) + p(v_1, v_2+v_3 | v_2) + p(v_1, v_2 |v_2+v_3)  \\
p(v_1|v_2+v_3) & =  p(v_1|v_2|v_2+v_3) + p(v_2, v_2+v_3 | v_1) + p(v_1, v_2 |v_2+v_3) 
\end{align*}
Since $p(v_2|v_2+v_3) = p(v_1|v_2)$ by Lemma \ref{L2}, the first result follows. 
Noting that Lemma \ref{L3} implies $p(v_1|v_2+v_3) = p(0|v_1)(1 + 1/\kappa)$ and subtracting 
the second equation in the proof from the third gives
$$
(1/\kappa) p(0|v_1) = p(v_2,v_2+v_3|v_1) - p(v_1, v_2+v_3|v_2)
$$
and the second result follows. To get the first displayed equation in the lemma,
substitute $p(v_1,v_2+v_3|v_2)  = p(v_2, v_2+v_3|v_1) - (1/\kappa)p(0|v_1)$ in the first equation in the proof.
Since $ p(v_1,v_2|v_2+v_3) = p(v_2,v_2+v_3|v_1)$ the second follows.
For the third one, use $p(v_2, v_2+v_3|v_1) = p(v_1,v_2+v_3|v_2) + (1/\kappa)p(0|v_1)$ in the first equation in the proof.
\end{proof}

\section{Derivation of the limiting PDE} \label{sec:derPDE}

In this section we will compute the functions $\phi_B^i(u)$ and $\phi_D^i(u)$ that
appear in the limiting PDE. To do this it is useful to note that since $\sum_j u_i=1$ we can write the
replicator equation \eqref{rep} as
\beq
\frac{du_i}{dt} = \sum_{j \neq i} \sum_k u_iu_ju_k (G_{i,k} - G_{j,k})
\label{repalt}
\eeq
where the restriction to $j \neq i$ comes from noting that the when $i=j$ the $G$'s cancel.

\subsection{Birth-Death dynamics}

By \eqref{hBD} the perturbation is
$$
h_{i,j}(0,\xi) = \sum_{k} f^{(2)}_{j,k}(0,\xi) G_{j,k}
$$
where $f^{(2)}$ was defined in Section 3.1.
In the multi-strategy case the rate of change of the frequency of strategy $i$ in the voter model
equilibrium is
\begin{align}
\phi^i_B(u) & = \left\langle \sum_{j\neq i}  - 1(\xi(0) = i) h_{i,j}(0,\xi) + 1(\xi(0)=j) h_{j,i}(0,\xi) \right\rangle_{\sqz u} 
\nonumber\\
 & = \sum_{j\neq i}\sum_k  - q(i,j,k) G_{j,k} + q(j,i,k) G_{i,k} 
\label{phiBq}
\end{align}
where $q(a,b,c) = P( \xi(0) = a, \xi(v_1) = b, \xi(v_1+v_2)=c )$ and $v_1$, $v_2$ are independent 
random choices from ${\cal N}$. If $a \neq b$ then
\begin{align}
q(a,b,c) & = p(0|v_1|v_1+v_2) u_a u_b u_c 
\label{evalq}\\
 & + 1_{(c=a)} p(0,v_1+v_2|v_1) u_a u_b + 1_{(c=b)} p(0|v_1,v_1+v_2) u_a u_b \nonumber
\end{align}
Combining the last two equations, and comparing with \eqref{repalt} we see that
\begin{align*}
\phi^i_B(u) &= \phi^i_R(u) p(0|v_1|v_1+v_2) + \sum_{j\neq i} p(0,v_1+v_2|v_1) u_i u_j (-G_{j,i}+ G_{i,j}) \\
& +  \sum_{j \neq i} p(0|v_1,v_1+v_2) u_i u_j (-G_{j,j} + G_{i,i})
\end{align*}
Formula \eqref{BDswitch} implies that $p(0,v_1+v_2|v_1)=p(0|v_1,v_1+v_2)$ so the last two terms can be combined into one.
\beq
\phi^i_B(u) = \phi^i_R(u) p(0|v_1|v_1+v_2) + \sum_{j\neq i} p(0|v_1,v_1+v_2) u_i u_j (G_{i,i}-G_{j,i}+ G_{i,j}-G_{j,j}) 
\label{phiBn}
\eeq
which is the formula given in \eqref{phiBi}

\subsection{Death-Birth dynamics}

In this case \eqref{hDB} implies that the perturbation is 
$$
\bar h_{i,j}^\ep(0,\xi) = h_{i,j}(0,\xi) - f_j \sum_k h_{i,k}(0,\xi) +O(\ep^2)
$$
From this we see that $\phi^i_D = \phi^i_B + \psi_D^i$ where
\begin{align*}
\psi_D^i =  & \left\langle \sum_{j \neq i} 1(\xi(0)=i) f_j(0,\xi) \sum_k h_{i,k} (0,\xi) 
 -  \sum_{j \neq i} 1(\xi(0)=j) f_i(0,\xi) \sum_k h_{j,k}(0,\xi) \right\rangle_{\sqz u}.
\end{align*}
Let $v_1$, $v_2$ and $v_3$ be independent random picks from ${\cal N}$ and let
$$
q(a,b,c,d) = P( \xi(v_1) = a, \xi(0) = b, \xi(v_2) = c, \xi(v_2+v_3) = d )
$$
we can write the new term in $\phi^i_D$ as
\beq
\psi^i_D  = \sum_{j \neq i} \sum_{k, \ell } [ q(j,i,k,\ell) - q(i,j,k,\ell) ] G_{k,\ell}
\label{newDBt}
\eeq
Adding and subtracting $q(i,i,k,\ell)$ we can do the sum over $j$ to get
\beq
\psi^i_D = \sum_{k,\ell} [q(i,k,\ell) - q(i,\cdot, k,\ell) ] G_{k,\ell} 
\label{4to3}
\eeq
where $q(a,b,c)$ is as defined above and
$$
q(a,\cdot,b,c) =  P( \xi(v_1) = a, \xi(v_2) = b, \xi(v_2+v_3) = c )
$$
If we let $q(b,c) = P(\xi(v_2) = b, \xi(v_2+v_3) = c )$ and write the sum in \eqref{4to3} as $\sum_{k\neq i}\sum_\ell$ plus
\begin{align*}
 \sum_\ell [q(i,i,\ell) - q(i,\cdot,i,\ell)] G(i,\ell) & = [q(i,\ell) - q(i,\ell)] G_{i,\ell} \\
& + \sum_{k \neq i}\sum_\ell [-q(k,i,\ell) + q(k,\cdot,i,\ell)] G_{i,\ell}
\end{align*}
To see this move the second sum on the right to the left.

Putting things together we have
$$
\psi^i_D  = \sum_{k\neq i}\sum_\ell [q(i,k,\ell) - q(i,\cdot, k,\ell) ] G_{k,\ell} 
 + \sum_{k\neq i}\sum_\ell [-q(k,i,\ell) + q(k,\cdot, i,\ell) ] G_{k,\ell}
$$
One half of the sum is
$$ 
\sum_{k,\ell} q(i,k,\ell) G_{k,\ell} -q(i,k,\ell) G_{k,\ell} = - \phi^i_B
$$
Since $\phi^i_D = \phi^i_B + \psi^i_D$ we have
\beq
\phi^i_D = \sum_{k,\ell} - q(i,\cdot, k,\ell) G(k,\ell) + q(i,\cdot, k,\ell) G_{k,\ell}
\label{phiDq}
\eeq
Note the similarity to (\ref{phiBq}). If $a \neq b$ then
\begin{align}
q(a,\cdot,b,c) & = p(v_1|v_2|v_2+v_3) u_a u_b u_c 
\label{evalqdot}\\
 & + 1_{(c=a)} p(v_1,v_2+v_3|v_2) u_a u_b + 1_{(c=b)} p(v_1|v_2,v_2+v_3) u_a u_b \nonumber
\end{align}
As in the Birth-Death case it follows that
\begin{align*}
\phi^i_D(u) &= \phi^i_R(u) p(v_1|v_2|v_2+v_3) + \sum_{j\neq i} p(v_1,v_2+v_3|v_2) u_i u_j (-G_{j,i}+ G_{i,j}) \\
& +  \sum_{j \neq i} p(v_1|v_2,v_2+v_3) u_i u_j (-G_{j,j} + G_{i,i})
\end{align*}
By \eqref{DBswitch} $p(v_1,v_2+v_3|v_2) = p(v_1|v_2,v_2+v_3) - (1/\kappa)p(v_1|v_2)$, so we can rewrite this as
\begin{align}
\phi^i_D(u) &= \phi^i_R(u) p(v_1|v_2|v_2+v_3) + \sum_{j\neq i} p(v_1|v_2,v_2+v_3) u_i u_j (G_{i,i} -G_{j,i}+ G_{i,j} -G_{j,j}) 
\nonumber\\
& -(1/\kappa) p(v_1|v_2)  \sum_{j \neq i}  u_i u_j (-G_{j,i} + G_{i,j})
\label{phiDn}
\end{align}

\section{Two strategy games with Death-Birth updating} \label{sec:2sDB}

\subsection{Computation of the phase diagram given in Figure 3}

We give the details behind the conclusions drawn in Figure 2. In the Death-Birth case 
$$
\theta = \frac{\bar p_2}{\bar p_1}(R+S-T+P) - \frac{p(v_1|v_2)}{\kappa \bar p_1}(S-T)
$$
To find the boundaries between the four cases using \eqref{4cases}, we  
let $\mu = \bar p_2/\bar p_1 \in (0,1)$ and $\nu = p(v_1|v_2)/\kappa \bar p_1$. We have $\alpha=\gamma$ when 
\begin{align*}
R-T = - \theta & = - \mu (R+S-T-P) + \nu(S-T) \\
& = -(\mu-\nu) (R+S-T-P) - \nu(R-P)
\end{align*}
Clearly $\nu>0$. The fact that $\mu-\nu>0$ follows from \eqref{cpos}.
Rearranging gives 
$$
(\mu-\nu)(S-P) + \nu(R-P) = (1+\mu-\nu)(T-R)
$$
and hence
\beq
T-R = \frac{\mu-\nu}{1+\mu-\nu}(S-P) +  \frac{\nu}{1+\mu-\nu}(R-P)
\label{line1}
\eeq
Repeating the last calculation shows that $\beta=\delta$ when 
$$
S-P = -\theta  = -(\mu-\nu) (R+S-T-P) - \nu(R-P) < 0
$$
Rearranging gives $(1+\mu-\nu)(S-P) +\nu(R-P) = (\mu-\nu)(T-R)$, and we have
\beq
T-R = \frac{1+\mu-\nu}{\mu-\nu}(S-P) +  \frac{\nu}{\mu-\nu}(R-P)
\label{line2}
\eeq

To find the intersections of the lines in \eqref{line1} and \eqref{line2}, we set 
$$
\frac{\mu-\nu}{1+\mu-\nu}(S-P) +  \frac{\nu}{1+\mu-\nu}(R-P)
= \frac{1+\mu-\nu}{\mu-\nu}(S-P) +  \frac{\nu}{\mu-\nu}(R-P)
$$
which holds if
$$
(\mu-\nu)^2 (S-P) + \nu(\mu-\nu)(R-P) = (1+\mu-\nu)^2(S-P) + \nu(1+\mu-\nu)(R-P)
$$
Solving gives
$$
S - P = \frac{-\nu (R-P)}{1+2(\mu-\nu)} < 0
$$
Using \eqref{line2} we see that at this value of $S$
$$
T-R = - \frac{\nu(R-P)}{\mu-\nu} \left[ \frac{1+\mu-\nu}{1+2(\mu-\nu)} - 1 \right] 
= \frac{\nu(R-P)}{1+2(\mu-\nu)} > 0
$$

To simplify the formulas for \eqref{line1} and \eqref{line2} let
\beq
P^* =  P - \frac{\nu (R-P)}{1+2(\mu-\nu)} \qquad  R^* = R + \frac{\nu(R-P)}{1+2(\mu-\nu)} 
\label{stardef}
\eeq
From this and \eqref{line1} we see that
\begin{align*}
T-R^* & = T - R - \frac{\nu(R-P)}{1+2(\mu-\nu)} \\
& = \frac{\mu-\nu}{1+\mu-\nu}(S-P) +  \frac{\nu}{1+\mu-\nu}(R-P) - \frac{\nu(R-P)}{1+2(\mu-\nu)} \\
& = \frac{\mu-\nu}{1+\mu-\nu}(S-P) - \frac{\mu - \nu}{1+\mu-\nu} \cdot \frac{\nu(R-P)}{1+2(\mu-\nu)}
= \frac{\mu-\nu}{1+\mu-\nu}(S-P^*)
\end{align*}
A similar calculation using \eqref{line2} shows that
\begin{align*}
T-R^* & = T - R - \frac{\nu(R-P)}{1+2(\mu-\nu)} \\
& = \frac{1+\mu-\nu}{\mu-\nu}(S-P) +  \frac{\nu}{\mu-\nu}(R-P) - \frac{\nu(R-P)}{1+2(\mu-\nu)} \\
& = \frac{1+\mu-\nu}{\mu-\nu}(S-P) + \frac{1+\mu-\nu}{\mu-\nu}\cdot \frac{\nu(R-P)}{1+2(\mu-\nu)}
= \frac{1+\mu-\nu}{\mu-\nu}(S-P^*)
\end{align*}
so the two lines can be written as 
$$
T-R^* = \frac{\mu-\nu}{1+\mu-\nu}(S-P^*) \qquad T-R^* = \frac{1+\mu-\nu}{\mu-\nu}(S-P^*)
$$
This leads to the four regions drawn in Figure 2.
In the lower left region where there is bistability, 1's win if $\bar u > 1/2$
or what is the same if strategy 1 is better than strategy 2 when $u=1/2$ or
$$
R + S - T - P > -2\theta
$$
Plugging in the value of $\theta$ this becomes
\beq
(1+2(\mu-\nu))(R+S-T-P) >  -2\nu(R-P)
\label{1swinDB}
\eeq
Dividing each side by $1+2(\mu-\nu)$ and recalling \eqref{stardef} this can be written as
$R^*-T > P^*-S$. For the proof of Theorem \ref{TF2} it is useful to write this as
\beq
\frac{1+2\mu}{1+2(\mu-\nu)} R + S > T + \frac{1+2\mu}{1+2(\mu-\nu)}
\label{keyeq}
\eeq
In the coexistence region, the equilibrium is
\beq
\bar u = \frac{S-P+\theta}{S-P+T-R}
\label{neweqDB}
\eeq
Plugging in the value of $\theta$, we see that $\bar u$ is constant on lines through $(S,T)=(R^*,P^*)$.
Again for the proof of Theorem \ref{TF2} a less compact form is more desirable.
Recalling the definition of $\theta$ we see that $\bar u  > 1/2$ when
$$
2(S-P) + 2\mu (R+S-T-P) - 2\nu(S-T) > S-P+T-R
$$
which rearranges to become
$$
(1+2\mu)R + [1+2(\mu-\nu)] S > [1+2(\mu-\nu)] T  + (1+2\mu)P
$$
Dividing by $1+2(\mu-\nu)$ this becomes \eqref{keyeq}. To prove Theorem \ref{TF2} now,
we note that
\begin{align*}
1 + 2\mu & = \frac{\bar p_1 + 2 \bar p_2}{\bar p_1} = (1+1/\kappa)\frac{p(v_1|v_2)}{\bar p_1} \\
1 + 2(\mu-\nu) & = (1+1/\kappa)\frac{p(v_1|v_2)}{\bar p_1} - 2 \frac{p(v_1|v_2)}{\kappa \bar p_1}
\end{align*}
so we have
$$
\frac{1+2\mu}{1+2(\mu-\nu)} = \frac{\kappa+1}{\kappa-1}
$$

\subsection{Proof of Tarnita's formula}

\begin{lemma} \label{phihalf}
Suppose the $\phi$ in the limiting PDE is cubic. The limiting frequency of 1's in the PDE is $> 1/2$ 
(and hence also in the particle system with small $\ep$) if and only if $\phi(1/2) >0$.  
\end{lemma}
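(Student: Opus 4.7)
The plan is a case-by-case analysis through the classification $S_1$--$S_4$ established earlier in Section~\ref{sec:2sg}. Writing the cubic as $\phi(u) = u(1-u)(\beta-\delta+\Gamma u)$ as in \eqref{rep2} (so that any interior zero sits at $\bar u = (\delta-\beta)/\Gamma$), a direct calculation gives $\phi(1/2) = (2(\beta-\delta)+\Gamma)/8$, and likewise $\int_0^1 \phi(u)\,du = (2(\beta-\delta)+\Gamma)/12 = (2/3)\phi(1/2)$. Thus $\mathrm{sign}\,\phi(1/2) = \mathrm{sign}\int_0^1\phi$, a fact that will matter only for the bistable case.

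First I would dispose of the dominance cases $S_3$ and $S_4$, where $\phi$ has constant sign on $(0,1)$. In $S_3$ we have $\phi<0$, so $\phi(1/2)<0$, and Theorem~\ref{CDPG} says $2$'s take over, so the equilibrium frequency of $1$'s is $0<1/2$. In $S_4$ all signs are reversed and $1$'s take over. The equivalence is immediate in both cases.

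In the coexistence case $S_1$, $\phi$ is positive on $(0,\bar u)$ and negative on $(\bar u,1)$, so $\phi(1/2)>0$ iff $\bar u>1/2$. Theorem~\ref{CDPG} states that for $\ep$ sufficiently small any nontrivial stationary distribution $\nu$ has $\sup_x|\nu(\xi(x)=1)-\bar u|<\eta$, so taking $\eta<|\bar u-1/2|$ transfers the PDE equivalence to the particle system.

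The delicate case is the bistable $S_2$, where $\phi$ switches from negative to positive at $\bar u$, giving $\phi(1/2)>0 \Leftrightarrow \bar u<1/2$. Theorem~\ref{CDPG} says $1$'s take over precisely when $\bar u<1/2$, so the lemma follows again. The substantive content here, and the main obstacle in principle, is the bistable half of Theorem~\ref{CDPG} itself: it is proved in \cite{CDP} via the traveling-wave analysis of Aronson--Weinberger and Fife--McLeod, which identifies the direction of the wave connecting $0$ and $1$ with $\mathrm{sign}\int_0^1\phi(u)\,du$; for a cubic this is the same as $\mathrm{sign}\,\phi(1/2)$ by the computation of the first paragraph. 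Since this input is entirely inherited from prior work, no additional analysis is required.
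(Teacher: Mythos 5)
Your proof is correct and follows essentially the same route as the paper: a case analysis through $S_1$--$S_4$ relating the sign of $\phi(1/2)$ to the position of $\bar u$ (or to the constant sign of $\phi$), with the dynamical conclusions imported from Theorem \ref{CDPG}. The extra observation that $\int_0^1\phi = \tfrac{2}{3}\phi(1/2)$ for such a cubic is a nice gloss on why the bistable case of Theorem \ref{CDPG} takes the form it does, but it does not change the argument.
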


\begin{proof}
As explained in the proof of Theorem \ref{CDPG} given in \cite{CDP} the behavior of the PDE is related to the shape of $\phi$
in the following way.

\mn
$\phi'(0)>0$, $\phi'(1)>0$. There is an interior fixed point $\bar u$, which is the limiting frequency.
If $\phi(1/2)>0$ then $\bar u > 1/2$.

\mn
$\phi'(0)>0$, $\phi'(1)<0$. $\phi>0$ on $(0,1)$ so the system converges to 1.

\mn
$\phi'(0)<0$, $\phi'(1)>0$. $\phi<0$ on $(0,1)$ so the system converges to 0.

\mn
$\phi'(0)<0$, $\phi'(1)<0$. There is an unstable fixed point $\bar u$. If $\bar u < 1/2$ 
which in this case is equivalent to $\phi(1/2)>0$ the system 
converges to 1, otherwise it converges to 0.

\mn
Combining our observations proves the desired result. \end{proof}

\mn
{\bf Death-Birth updating.} In this case, we have $\phi(1/2)>0$ if 
$$
a + b + \theta > c + d - \theta
$$
where $\theta = (p_2/p_1)(a+b-c-d) - p(v_1|v_2)/\kappa p_1$.  Rearranging we see that the last inequality holds if
$$
\kappa(\bar p_1 +2\bar p_2) (a+b-c-d) - 2p(v_1|v_2) (b-c)
$$
Using \eqref{DB1} $\bar p_1 + 2\bar p_2 = p(v_1|v_2)( 1 + 1/\kappa )$ this becomes
$$
p(v_1|v_2)[ (\kappa + 1)(a-d) + (\kappa-1)(b-c)
$$
which is the condition in Tarnita's formula with $\sigma=(\kappa+1)/(\kappa-1)$.

\section{Equilibria for three strategy games} \label{sec:alg3}

Recall that we can write the game matrix as
$$
\begin{pmatrix} 0 & \alpha_3 & \beta_2 \\ \beta_3 & 0 & \alpha_1 \\ \alpha_2 & \beta_1 & 0 \end{pmatrix}
$$
In equilibrium all three strategies must have the same payoff so an interior equilibrium $(x,y,1-x-y)$ must satisfy
$$
\alpha_3 y + \beta_2(1-x-y) =  \beta_3x + \alpha_1(1-x-y) =  \alpha_2x + \beta_1y 
$$
Equating the first to the third, the second to the third and rearranging we obtain
\begin{align*}
(\alpha_2+\beta_2) x + (\beta_1 + \beta_2 - \alpha_3) y & = \beta_2  \\
(\alpha_1+ \alpha_2 - \beta_3) x + (\alpha_1+\beta_1) y & = \alpha_1 
\end{align*}
From this we see that [to find $x$ look at $(\alpha_1+\beta_1)E_1-(\beta_1 - \beta_2-\alpha_3)E_2$]
\begin{align*}
x & = \frac{ \beta_2(\alpha_1+\beta_1) - \alpha_1(\beta_1+\beta_2-\alpha_3)   } 
{(\alpha_1+\beta_1)(\alpha_2+\beta_2) - (\alpha_1 + \alpha_2-\beta_3)(\beta_1 +\beta_2 - \alpha_3)  } \\
y & = \frac{ \alpha_1(\alpha_2+\beta_2) - \beta_2(\alpha_1 + \alpha_2 -\beta_3) -  }
{(\alpha_1+\beta_1)(\alpha_2+\beta_2) - (\alpha_1 + \alpha_2-\beta_3)(\beta_1 +\beta_2 - \alpha_3)  }
\end{align*} 
The 13 terms in the denominator reduce to nine:
$$
D  =  \alpha_1\alpha_2 + \alpha_1 \alpha_3 + \alpha_2 \alpha_3 + \beta_1\beta_2 + \beta_1\beta_3 + \beta_2 \beta_3
 - \alpha_1\beta_1 - \alpha_2\beta_2 - \alpha_3\beta_3
$$
The numerators for $x$ and $y$ reduce to 3 terms each, so the equilibrium must be:
\begin{align*}
\bar u_1 & = \frac{  \alpha_1\alpha_3 + \beta_1\beta_2 - \alpha_1\beta_1}{D} 
\\
\bar u_2 & = \frac{  \alpha_2\alpha_1 + \beta_2\beta_3 - \alpha_2\beta_2}{D} 
\\
\bar u_3 & = \frac{  \alpha_3\alpha_2 + \beta_3\beta_1  - \alpha_3\beta_3 }{D}.
\end{align*}
which is the formula given in \eqref{Geq}.





\ACKNO{We thank Ed Perkins and a second anonymous referee for suggestions
that improved the correctness and readability of the paper.}


\end{document}